\icmltitlerunning{Smooth $\bm{\Wp}$: Structure, Empirical Approximation, and Applications}
\begin{document}

\twocolumn[
\icmltitle{Smooth \textbf{\emph{p}}-Wasserstein Distance:\\ Structure, Empirical Approximation, and Statistical Applications}

\icmlsetsymbol{equal}{*}

\begin{icmlauthorlist}
\icmlauthor{Sloan Nietert}{cs}
\icmlauthor{Ziv Goldfeld}{ece}
\icmlauthor{Kengo Kato}{stats}
\end{icmlauthorlist}

\icmlaffiliation{cs}{Department of Computer Science, Cornell University, Ithaca, NY}
\icmlaffiliation{ece}{School of Electrical and Computer Engineering, Cornell University, Ithaca, NY}
\icmlaffiliation{stats}{Department of Statistics and Data Science, Cornell University, Ithaca, NY}

\icmlcorrespondingauthor{Sloan Nietert}{nietert@cs.cornell.edu}

\icmlkeywords{optimal transport, empirical approximation, Wasserstein distance}

\vskip 0.3in
]

\printAffiliationsAndNotice{}  %

\begin{abstract}
Discrepancy measures between probability distributions, often termed statistical distances, are ubiquitous in probability theory, statistics and machine learning. To combat the curse of dimensionality when estimating these distances from data, recent work has proposed smoothing out local irregularities in the measured distributions via convolution with a Gaussian kernel. Motivated by the scalability of this framework to high dimensions, we investigate the structural and statistical behavior of the Gaussian-smoothed $p$-Wasserstein distance $\GWp$, for arbitrary $p\geq 1$.
After establishing basic metric and topological properties of $\GWp$, we explore the asymptotic statistical behavior of $\GWp(\hat{\mu}_n,\mu)$, where $\hat{\mu}_n$ is the empirical distribution of $n$ independent observations from $\mu$.
We prove that $\GWp$ enjoys a parametric empirical convergence rate of $n^{-1/2}$, which contrasts the $n^{-1/d}$ rate for unsmoothed $\Wp$ when $d \geq 3$. Our proof relies on controlling $\GWp$ by a $p$th-order smooth Sobolev distance $\Gds$ and deriving the limit distribution of $\sqrt{n}\,\Gds(\hat{\mu}_n,\mu)$, for all dimensions $d$. As applications, we provide asymptotic guarantees for two-sample testing and minimum distance estimation using $\GWp$, with experiments for $p=2$ using a maximum mean discrepancy formulation~of~$\mathsf{d}_2^{(\sigma)}$. 
\end{abstract}

\section{Introduction}\label{sec:intro}

The Wasserstein distance $\Wp$ is a discrepancy measure between probability distributions rooted in the theory of optimal transport \cite {villani2003,villani2008optimal}.
It has seen a surge of applications in statistics and ML, ranging from generative modeling \cite{arjovsky_wgan_2017, gulrajani2017improved, tolstikhin2018wasserstein} and image recognition \cite{rubner2000earth,sandler2011nonnegative,li2013novel} to domain adaptation \cite{courty2014domain,courty2016optimal} and robust optimization \cite{mohajerin_robust_2018,blanchet2018optimal, gao2016distributionally}.
The widespread use of this statistical distance is driven by an array of desirable properties, including its metric structure, a convenient dual form, and its robustness to support mismatch.

In applications, the Wasserstein distance is often estimated from samples.
However, the error of these empirical estimates suffers from an exponential dependence on dimension that presents an obstacle to sample-efficient bounds for inference and learning. More specifically, the rate at which $\Wp(\hat{\mu}_n,\mu)$ converges to 0, where $\hat{\mu}_n$ is an empirical measure based on $n$ independent %
samples from $\mu$, scales as $n^{-1/d}$ under mild moment conditions, for $d \geq 3$ \cite{dereich2013,boissard2014mean,fournier2015,panaretos2019statistical,weed2019rate, lei2020convergence}. This rate deteriorates poorly with dimension and seems at odds with the scalability of empirical $\Wp$ observed in modern machine learning practice.

\subsection{Smooth Wasserstein Distances}

Gaussian smoothing was recently introduced as a means to alleviate the  curse of dimensionality of empirical $\Wp$, while preserving the virtuous structural properties of the classic framework \cite{goldfeld2019,Goldfeld2020GOT,goldfeld2020asymptotic}.
Specifically the $\sigma$-smooth $p$-Wasserstein distance is $\GWp(\mu,\nu) := \Wp(\mu*\Gauss,\nu*\Gauss)$, where $\Gauss = \cN(0,\sigma^2 \I)$ is the isotropic Gaussian measure of parameter $\sigma$. \citet{Goldfeld2020GOT} showed that $\gwass$ inherits the metric and topological structure of~$\mathsf{W}_1$ and approximates it within an $O(\sigma\sqrt{d})$ gap. At the same time, empirical convergence rates for $\GWp$ are much faster. %
As shown in \cite{goldfeld2019}, $\E\big[\gwass(\hat{\mu}_n,\mu)\big] = O(n^{-1/2})$ when $\mu$ is sub-Gaussian in any dimension, i.e., it exhibits a parametric convergence rate. This fast rate was also established for $\mathsf{W}_2^{(\sigma)}$ but only when the sub-Gaussian constant is smaller than $\sigma/2$. These results significantly depart from the $n^{-1/d}$ rate in the unsmoothed case.

Follow-up work \cite{goldfeld2020asymptotic} developed a limit distribution theory for $\gwass$, showing that $\sqrt{n}\,\gwass(\hat{\mu}_n,\mu)$ converges in distribution to the supremum of a tight Gaussian process, for all dimensions $d$ and under a milder moment condition.
Such limit distribution results are known for unsmoothed $\mathsf{W}_p$ only when $p \in \{ 1,2 \}$ and $d=1$ \citep{delbarrio1999,delbarrio2005} or $\mu$ is supported on a finite or countable set \citep{sommerfeld2018,tameling2019}, but are a wide open question otherwise. 
Other works investigated the behavior of $\GWp$
as $\sigma \to \infty$ \cite{chen2020}, and established related results for other statistical distances, including total variation (TV), Kullback-Leibler (KL) divergence, and $\chi^2$-divergence \cite{goldfeld2019,chen2020}.

\subsection{Contributions}

We focus on the smooth $p$-Wasserstein distance $\GWp$ for $p > 1$ and arbitrary dimension $d$. We first explore basic structural properties of $\GWp$, proving that many of the beneficial attributes of $\Wp$ carry over to the smooth setting. We show that $\GWp$ is a metric and induces the same topology as $\Wp$.
Then, we prove that $\GWp$ is stable under small perturbations of the smoothing parameter $\sigma$, implying, in particular, that $\GWp \to \Wp$ as $\sigma \to 0$. 
We then extend the stability of optimal transport distances to that of transport plans, establishing weak convergence of the optimal couplings for $\GWp$ to those of $\Wp$ as $\sigma$ shrinks.

Moving on to a statistical analysis, we explore empirical convergence for $\GWp$. Elementary techniques imply that $\E\big[\GWp(\hat{\mu}_n,\mu)\big] = O(n^{-1/(2p)})$ under a mild moment condition. While this rate is independent of $d$, it is suboptimal in $p$, with the expected answer being $n^{-1/2}$ as previously established for $p=1,2$ \citep{goldfeld2020asymptotic,goldfeld2019}.
To get the correct rate, we establish a comparison between $\GWp$ and a smooth $p$th-order Sobolev integral probability metric (IPM), $\Gds$; the latter lends itself well to tools from empirical process theory.
Under a sub-Gaussian assumption, we prove that the function class defining $\Gds$ is $\mu$-Donsker, giving a limit distribution for $\sqrt{n}\, \Gds(\hat{\mu}_n,\mu)$ that implies the  $n^{-1/2}$ rate for $\GWp$.
We conclude with a concentration inequality for $\GWp(\hat{\mu}_n,\mu)$.

We next turn to computational aspects,
first showing that $\mathsf{d}_2^{(\sigma)}$ is efficiently computable as a maximum mean discrepancy (MMD) and characterizing its reproducing kernel. 
Next, we consider applications to two-sample testing and generative modeling using $\GWp$.
We construct two-sample tests based on the smooth $p$-Wasserstein test statistic %
that achieve asymptotic consistency and correct asymptotic level.
For generative modeling, we examine minimum distance estimation with $\GWp$
and establish measurability, consistency, and parametric convergence rates, along with finite-sample generalization guarantees in arbitrary dimension.
Many of these directions (beyond measurability and consistency) are intractable with standard $\Wp$ unless $d = 1$.
We conclude with numerical results that support our theory.

\subsection{Related Discrepancy Measures}
The sliced Wasserstein distance \cite{rabin2011} takes an average (or maximum \cite{deshpande2019}) of one-dimensional Wasserstein distances over random projections of the $d$-dimensional distributions.
Like the smooth framework considered herein, the sliced distance also exhibits an $n^{-1/2}$ empirical convergence rate and has characterized limit distributions in some cases \cite{nadjahi2019,nadjahi2020}.
In \cite{nadjahi2019}, sliced $\mathsf{W}_1$ was shown to be a metric that induces a topology at least as fine as that of weak convergence, akin to $\gwass$. They further examined generative modeling via minimum sliced Wasserstein estimation, establishing measurability, consistency, and some limit theorems.
However, while $\GWp$ converges to $\Wp$ as $\sigma \to 0$, there is no approximation parameter for these sliced distances, and comparisons to the standard distances typically require compact support and feature dimension-dependent multiplicative constants (see, e.g., \cite{bonnotte2013}).

Another relevant framework is entropic optimal transport (EOT), which  admits efficient algorithms \cite{cuturi2013,altschuler2017} and some desirable statistical properties \cite{genevay2016,rigollet2018}. In particular, two-sample EOT has empirical convergence rate $n^{-1/2}$ for smooth costs and compactly supported distributions \cite{genevay2019}.
For the quadratic cost, \cite{mena19} extended this rate to sub-Gaussian distributions and derived a central limit theorem (CLT) for empirical EOT, mirroring a result for $\mathsf{W}_2$ established in \cite{barrio2019}.
The \citet{mena19} CLT is notably different from ours: it uses as a centering constant the expected empirical distance between $\hat{\mu}_m$ and $\hat{\nu}_n$ as opposed to the population distance between $\mu$ and $\nu$ (which corresponds to our centering about 0 in the one-sample case).
Finally, while EOT can be computed efficiently, it is no longer a metric, even if the underlying cost is \cite{feydy2019,bigot2019}.

Sobolev IPMs have proven independently useful for generative modeling, often referred to as `dual Sobolev norms'. For example, alternative Sobolev IPMs are the basis for multiple generative adversarial network (GAN) frameworks \cite{youssef2018, xu2020} and are featured in \cite{si2020}, which examines Wasserstein projections of empirical measures onto a chosen hypothesis class.

\section{Preliminaries}
\label{sec:background}

\subsection{Notation}
Let $| \cdot |$ and $\langle \cdot, \cdot \rangle$ denote the Euclidean norm and inner product. For a (signed) measure $\mu$ and a measurable function $f$ on $\R^d$, we write $\mu(f) = \int f \, \dd \mu$.
For a non-empty set $\cT$, let $\ell^\infty(\cT)$ be the space of all bounded functions $f:\cT \to \R$, equipped with the sup-norm $\|f\|_{\infty,\cT} = \sup_{t \in \cT}|f(t)|$.
The space of compactly supported, infinitely differentiable real functions on $\R^d$ is~$C_0^\infty$.
For any $p \in [1,\infty)$ and any Borel measure $\gamma$ on $\R^d$, we denote by  $L^p(\gamma;\R^k)$ the space of measurable maps $f: \R^d \to \R^k$ such that $\|f\|_{L^p(\gamma;\R^k)} = (\int_{\R^d}|f|^p \dd \gamma)^{1/p} < \infty$. The space $(L^p(\gamma;\R^k), \| \cdot \|_{L^p(\gamma;\R^k)})$ is a Banach space, and we also write $L^p(\gamma) = L^p(\gamma;\R^1)$.

The class of Borel probability measures on $\R^d$ is $\cP$ and the subset of measures $\mu \in \cP$ with finite $p$-th moment $\int |x|^{p} \, \dd \mu(x)$ is $\cP_p$. The convolution of measures $\mu,\nu \in \cP$ is defined by $(\mu*\nu)(A) := \int\int \mathds{1}_A(x + y) \, \dd \mu(x) \, \dd \nu(y)$, where $\mathds{1}_A$ is the indicator of $A$. The convolution of measurable functions $f,g$ on $\R^d$ is $(f*g)(x) := \int f(x-y)g(y) \, \dd y$. Recall that $\Gauss = \cN(0,\sigma^2 \I)$ and use $\varphi_\sigma(x) = (2\pi\sigma^2)^{-d/2}e^{-|x|^2/(2\sigma^2)}$, $x \in \R^d$, for the Gaussian density. Write $\mu \otimes \nu$ for the product measure of $\mu,\nu \in \cP$. Let $\stackrel{w}{\to}$ and $\stackrel{d}{\to}$ denote weak convergence of probability measures and convergence in distribution of random variables.

\subsection{Background}

We next provide some background on the statistical distances used in this paper.

\paragraph{(Smooth) Wasserstein Distance.} For $p \geq 1$, the $p$-Wasserstein distance between $\mu, \nu \in \cP_p$ is defined by
\[
\Wp(\mu,\nu) := \left(\inf_{\pi \in \Pi(\mu,\nu)} \int_{\R^d}|x-y|^p\, \dd \pi(x,y)\right)^{1/p}
\]
where $\Pi(\mu,\nu)$ is the set of couplings of $\mu$ and $\nu$.
See \cite{villani2003,villani2008optimal,santambrogio2015} for additional background.
The $\sigma$-smooth $p$-Wasserstein distance between probability measures $\mu, \nu \in \cP_p$ is defined by 
\[\GWp(\mu,\nu) := \Wp(\mu*\Gauss, \nu*\Gauss).\]

\paragraph{Integral Probability Metrics.}
Let $\cF$ be a class of measurable real functions on $\R^d$. 
The IPM with respect to (w.r.t.) $\cF$ between probability measures $\mu,\nu \in \cP$ is defined by
\[
\|\mu - \nu\|_{\infty,\cF} = \sup_{f \in \cF} \mu(f) - \nu(f).
\]
We subsequently control $\GWp$ via an IPM whose functions have bounded Sobolev norm.

\paragraph{Smooth Sobolev IPM.}

Let $\gamma$ be a Borel measure on $\R^d$ and fix $p \geq 1$.
For a differentiable function $f:\R^{d}\to\R$, let
\[
\| f \|_{\dot{H}^{1,p}(\gamma)}:= \| \nabla f \|_{L^{p}(\gamma;\R^d)} = \left (\int_{\R^{d}} |\nabla f |^p  \dd\gamma \right)^{1/p}
\]
be its Sobolev seminorm. 
We define the homogeneous Sobolev space $\dot{H}^{1,p}(\gamma)$ as the completion of $\dot{C}_{0}^\infty = \{ f+a : a \in \R, f \in C_0^\infty \}$ w.r.t. $\| \cdot \|_{\dot{H}^{1,p}(\gamma)}$. 
The dual Sobolev norm of a signed measure $\ell$ on $\R^d$ with zero total mass is
\[
\|\ell\|_{\dot{H}^{-1,p}(\gamma)} := \sup \{ \ell(f) : f \in C_0^\infty, \|f\|_{\dot{H}^{1,q}(\gamma)} \leq 1 \},
\]
where $q$ is the conjugate index of $p$, i.e., $1/p+1/q = 1$. 
We define the $p$th-order smooth Sobolev IPM by
\[
\Gds(\mu,\nu) := \|(\mu - \nu)*\Gauss\|_{\dot{H}^{-1,p}(\Gauss)}
\]
for measures $\mu,\nu \in \cP$.
Observe that $\Gds$ is an IPM w.r.t. the class $\cF * \varphi_\sigma = \{ f * \varphi_\sigma : f \in \cF \}$ with $\cF = \{ f \in C_0^\infty : \|f\|_{\dot{H}^{1,q}(\Gauss)} \leq 1\}$.

\section{Structure of Smooth Wasserstein Distance and Comparison with Smooth Sobolev IPM}
\label{sec:smooth-Wp}

We now examine basic properties of smooth Wasserstein distances, including a useful connection to the smooth Sobolev IPM. 
The case of $\mathsf{W}_1^{(\sigma)}$ has been well-studied in \cite{Goldfeld2020GOT,goldfeld2020asymptotic}. Herein we present results that hold for arbitrary $p \geq 1$ and $\sigma \geq 0$ unless stated otherwise, with proofs left for the appendix. Extending beyond $p=1$ requires new techniques, most prominently a comparison result between $\GWp$ and $\Gds$.

\subsection{Structural Properties}

We first consider the topology induced by $\GWp$. Since convolution acts as a contraction, we have $\Wp^{(\sigma)} \leq \Wp$. In fact, the two distances induce the same topology on~$\cP_{p}$, which coincides with that of weak convergence in addition to convergence of $p$th moments.

\begin{proposition}[Metric and topological structure of $\GWp$]
\label{prop:smooth-wasserstein-metric}
$\Wp^{(\sigma)}$ is a metric on $\cP_{p}$ inducing the same topology as $\Wp$.
\end{proposition}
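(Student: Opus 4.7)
The plan is to verify the metric axioms directly and then characterize $\GWp$-convergence on $\cP_p$ as the conjunction of weak convergence and $p$-th moment convergence, which is the standard description of $\Wp$-convergence. Since $\mu,\nu\in\cP_p$ and $\Gauss\in\cP_p$, Minkowski gives $\mu*\Gauss,\nu*\Gauss\in\cP_p$, so $\GWp$ is finite. Nonnegativity, symmetry, and the triangle inequality are inherited from $\Wp$ applied to the convolved measures. For identity of indiscernibles, $\GWp(\mu,\nu)=0$ forces $\mu*\Gauss=\nu*\Gauss$; taking characteristic functions gives $\hat\mu\,\hat\varphi_\sigma=\hat\nu\,\hat\varphi_\sigma$, and since $\hat\varphi_\sigma$ is everywhere positive, $\hat\mu=\hat\nu$, whence $\mu=\nu$.

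One topology containment is immediate: using the coupling $(X,Y)\mapsto(X+Z,Y+Z)$ with $Z\sim\Gauss$ drawn independently, convolution with $\Gauss$ is a contraction under $\Wp$, so $\GWp\le\Wp$ and $\Wp$-convergence implies $\GWp$-convergence. For the converse, suppose $\GWp(\mu_n,\mu)\to 0$; we must show $\mu_n\stackrel{w}{\to}\mu$ and $\int|x|^p\,\dd\mu_n\to\int|x|^p\,\dd\mu$. Weak convergence follows from two applications of L\'evy's continuity theorem: $\Wp$ metrizes weak convergence on $\cP_p$, so $\mu_n*\Gauss\stackrel{w}{\to}\mu*\Gauss$ yields $\hat\mu_n\hat\varphi_\sigma\to\hat\mu\hat\varphi_\sigma$ pointwise; dividing by the strictly positive $\hat\varphi_\sigma$ recovers $\hat\mu_n\to\hat\mu$ pointwise, and hence $\mu_n\stackrel{w}{\to}\mu$.

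For moment convergence, let $X_n\sim\mu_n$, $X\sim\mu$, and $Z\sim\Gauss$ be mutually independent. The $\Wp$-convergence of $\mu_n*\Gauss$ to $\mu*\Gauss$, together with finiteness of the limiting $p$-th moment, implies uniform integrability (UI) of $\{|X_n+Z|^p\}_n$; combined with the trivial UI of $|Z|^p$, the dominating sequence $2^{p-1}(|X_n+Z|^p+|Z|^p)$ is UI, and via the inclusion $\{|X_n|^p>M\}\subseteq\{2^{p-1}(|X_n+Z|^p+|Z|^p)>M\}$ this UI transfers to $\{|X_n|^p\}_n$. Vitali's theorem together with $\mu_n\stackrel{w}{\to}\mu$ then gives $\int|x|^p\,\dd\mu_n\to\int|x|^p\,\dd\mu$, completing the proof.

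The main obstacle is precisely this moment-transfer step: $\Wp$-convergence of $\mu_n*\Gauss$ delivers information about $X_n+Z$ rather than $X_n$, so the additive Gaussian noise has to be ``deconvolved'' at the level of moments. The clean deterministic bound $|X_n|\le|X_n+Z|+|Z|$ coupled with a uniform integrability argument accomplishes this, exploiting that $\Gauss$ has finite moments of every order; the remaining steps are essentially bookkeeping around the standard characterization of $\Wp$-convergence.
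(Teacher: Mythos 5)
Your proof is correct and follows essentially the same route as the paper's: metric axioms via the non-vanishing Gaussian characteristic function, the contraction property for one inclusion of topologies, and, for the converse, recovery of weak convergence by dividing characteristic functions together with uniform integrability of $|X_n|^p$ transferred from the smoothed sequence via the bound $|X_n|^p\le 2^{p-1}\bigl(|X_n+Z|^p+|Z|^p\bigr)$. The only (harmless) difference is that the paper obtains uniform integrability of the smoothed sequence from an explicit optimal coupling built with the gluing lemma, whereas you invoke the standard equivalence of $\Wp$-convergence with weak convergence plus uniform integrability applied to the laws $\mu_n*\Gauss$ on a simple product space, which works just as well.
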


The proof uses existence of optimal couplings and uniform integrability arguments.
Next, we examine the behavior of $\GWp$ as a function of the smoothing parameter $\sigma$.
We start from the following stability lemma, guaranteeing that small changes in $\sigma$ result only in slight perturbations of $\GWp$.

\begin{lemma}[Stability of $\GWp$]
\label{lem:stability}
For $\mu,\nu \in \cP_p$ and $0 \le \sigma_1 \leq \sigma_2 < \infty$, we have
\begin{align*}
\big|\Wp^{(\sigma_2)}(\mu,\nu) -\Wp^{(\sigma_1)}(\mu,\nu)\mspace{-1mu}\big| \leq 2\sqrt{(\sigma_2^2 \mspace{-1mu} - \mspace{-1mu} \sigma_1^2)(d \mspace{-1mu}+\mspace{-1mu} 2p\mspace{-1mu} + \mspace{-1mu}2)}.
\end{align*}
\end{lemma}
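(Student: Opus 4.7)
The plan hinges on the semigroup property of Gaussian convolution: setting $s := \sqrt{\sigma_2^2 - \sigma_1^2}$, we have $\cN(0,\sigma_2^2\I) = \cN(0,\sigma_1^2\I)*\cN(0,s^2\I)$, so $\mu*\cN(0,\sigma_2^2\I) = (\mu*\cN(0,\sigma_1^2\I))*\cN(0,s^2\I)$, and likewise for $\nu$. Two applications of the triangle inequality for $\Wp$ then yield
\[
\big|\Wp^{(\sigma_2)}(\mu,\nu) - \Wp^{(\sigma_1)}(\mu,\nu)\big| \le \Wp\!\big(\mu*\cN(0,\sigma_1^2\I),\mu*\cN(0,\sigma_2^2\I)\big) + \Wp\!\big(\nu*\cN(0,\sigma_1^2\I),\nu*\cN(0,\sigma_2^2\I)\big),
\]
so the task reduces to bounding the $\Wp$ cost of adding an independent $\cN(0,s^2\I)$ noise to an arbitrary measure in $\cP_p$.

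For this I would use the obvious synchronous coupling: taking $X \sim \mu*\cN(0,\sigma_1^2\I)$ and $Z \sim \cN(0,s^2\I)$ independent, the pair $(X,X+Z)$ couples $\mu*\cN(0,\sigma_1^2\I)$ with $\mu*\cN(0,\sigma_2^2\I)$, giving $\Wp^p(\mu*\cN(0,\sigma_1^2\I),\mu*\cN(0,\sigma_2^2\I)) \le \E[|Z|^p]$, and analogously for $\nu$. Crucially, this bound is completely independent of $\mu$ and $\nu$, which is why no hypothesis beyond $\mu,\nu\in\cP_p$ is invoked (the latter only ensures the Wasserstein distances in question are finite).

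It remains to estimate $\E[|Z|^p]^{1/p} = s\,\E[|Z_0|^p]^{1/p}$ for a standard normal $Z_0 \sim \cN(0,\I)$. Using $|Z_0|^2 \sim \chi^2_d$ and the closed-form even moment $\E[|Z_0|^{2k}] = d(d+2)\cdots(d+2k-2)$, AM--GM gives $\E[|Z_0|^{2k}]^{1/(2k)} \le \sqrt{d+k-1}$; for non-even $p$ I would interpolate via Lyapunov's inequality by passing up to the next even integer $p' \le p+1$, which leaves ample slack for the announced constant $d+2p+2$. Summing the two symmetric $\mu$- and $\nu$-terms then produces the factor of $2$ in the stated estimate.

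The only nontrivial ingredient is the chi-squared moment bound, and the stated constant $d+2p+2$ is loose enough that the precise route (AM--GM on the product form, Stirling estimates for $\Gamma((d+p)/2)/\Gamma(d/2)$, or Lyapunov interpolation from even moments) does not matter; the rest of the argument is essentially forced by the Gaussian semigroup and the triangle inequality. The case $\sigma_1 = 0$ requires no special treatment since $\mu*\cN(0,0\cdot\I) = \mu$ and the coupling argument goes through verbatim.
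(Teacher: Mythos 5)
Your proof is correct and arrives at exactly the stated constant, but it is packaged somewhat differently from the paper's. The paper first notes $\Wp^{(\sigma_2)}(\mu,\nu)\le\Wp^{(\sigma_1)}(\mu,\nu)$ via the contractive property of convolution, and then handles the remaining direction by writing $\Wp^{(\sigma_2)}$ as an infimum over couplings of $X+Z_X+Z_X'$ and $Y+Z_Y+Z_Y'$ (with $Z_X',Z_Y'\sim\cN(0,(\sigma_2^2-\sigma_1^2)\I)$ independent of the rest) and applying Minkowski's inequality inside that infimum, yielding $\Wp^{(\sigma_2)}\ge\Wp^{(\sigma_1)}-2\big(\E[|Z_X'|^p]\big)^{1/p}$. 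You instead use the generic metric estimate $|\Wp(a',b')-\Wp(a,b)|\le\Wp(a,a')+\Wp(b,b')$ together with the synchronous-coupling bound $\Wp\big(\rho,\rho*\cN(0,s^2\I)\big)\le\big(\E[|Z|^p]\big)^{1/p}$, which treats both sides of the absolute value symmetrically and dispenses with the contraction lemma. The two routes are cognate (the $\Wp$ triangle inequality is itself a Minkowski/gluing argument), but yours is more modular, at the mild cost of not exhibiting the monotonicity $\Wp^{(\sigma_2)}\le\Wp^{(\sigma_1)}$ that the paper records and reuses. The Gaussian moment estimate also differs in detail: the paper evaluates $\E[|Z|^p]=2^{p/2}\,\Gamma((p+d)/2)/\Gamma(d/2)$ and rounds $p$ up to the nearest even integer to get $(d+2p+2)^{p/2}$, whereas you use the $\chi^2_d$ product formula, AM--GM, and Lyapunov interpolation; both work, though your parenthetical claim that the next even integer satisfies $p'\le p+1$ should read $p'<p+2$ --- harmless given the slack, since $k\le p/2+1$ still gives $\E[|Z_0|^p]^{1/p}\le\sqrt{d+p/2}\le\sqrt{d+2p+2}$.
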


This result generalizes Lemma 1 of \cite{Goldfeld2020GOT}, which covers $p=1$, and establishes uniform continuity of $\GWp$ in $\sigma$. Its proof takes a different approach, using Minkowski's inequality instead of the Kantorovich-Rubinstein duality. An immediate consequence of \cref{lem:stability} is given next, mirroring Theorem 3 of \cite{Goldfeld2020GOT}. %

\begin{corollary}
[$\GWp$ dependence on $\sigma$]
\label{prop:smooth-Wp-dependence-on-sigma}
For~$\mu,\nu \in \cP_p$, the following hold:
\vspace{-3.5mm}
\begin{enumerate}[wide, labelindent=0pt, label=(\roman*)]
\item $\GWp(\mu,\nu)$ is continuous and monotonically non-increasing in $\sigma \in [0,+\infty)$;
\vspace{-1.5mm}
\item $\lim\limits_{\sigma \to 0} \GWp(\mu,\nu) = \Wp(\mu,\nu)$;
\vspace{-1.5mm}
\item $\lim\limits_{\sigma \to \infty} \GWp(\mu,\nu) = \big|\E[X] - \E[Y]\big|\:$, for $X \sim \mu$ and $Y \sim \nu$ sub-Gaussian.
\end{enumerate}
\end{corollary}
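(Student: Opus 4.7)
Parts (i) and (ii) are essentially corollaries of the preceding stability lemma. For monotonicity in (i), I would use the Gaussian semigroup: for $0 \le \sigma_1 \le \sigma_2$, setting $\tau = \sqrt{\sigma_2^2 - \sigma_1^2}$, one has $\cN(0,\sigma_2^2 \I) = \cN(0,\sigma_1^2 \I) * \cN(0,\tau^2 \I)$, so $\mu*\cN(0,\sigma_2^2\I) = (\mu*\cN(0,\sigma_1^2\I)) * \cN(0,\tau^2\I)$ and similarly for $\nu$. Since convolution with any common probability measure is a $\Wp$-contraction (take an optimal coupling of $\mu*\cN(0,\sigma_1^2\I)$ and $\nu*\cN(0,\sigma_1^2\I)$ and add an independent copy of $W \sim \cN(0,\tau^2\I)$ to both coordinates), this yields $\mathsf{W}_p^{(\sigma_2)}(\mu,\nu) \le \mathsf{W}_p^{(\sigma_1)}(\mu,\nu)$. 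Continuity of $\sigma \mapsto \GWp(\mu,\nu)$ on $[0,\infty)$ is immediate from the stability bound $|\mathsf{W}_p^{(\sigma_2)} - \mathsf{W}_p^{(\sigma_1)}| \le 2\sqrt{(\sigma_2^2 - \sigma_1^2)(d+2p+2)}$; part (ii) is the same bound applied at $\sigma_1 = 0$.

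For (iii), the lower bound is a direct application of Jensen's inequality: for any coupling $\pi$ of $\mu*\Gauss$ and $\nu*\Gauss$,
\[
\int |x-y|^p\,\dd\pi(x,y) \;\ge\; \Big|\int (x-y)\,\dd\pi(x,y)\Big|^p \;=\; |\E[X]-\E[Y]|^p,
\]
since the marginals have respective means $\E[X]$ and $\E[Y]$; taking the infimum and $p$-th root gives $\GWp(\mu,\nu) \ge |\E[X]-\E[Y]|$ for every $\sigma$. For the matching upper bound I would apply the triangle inequality through the Dirac masses at the population means. Because $\delta_a*\Gauss = \cN(a,\sigma^2\I)$ is a translate of $\Gauss$, the Dirac-to-Dirac term equals exactly $|\E[X]-\E[Y]|$, and the task reduces to showing
\[
\lim_{\sigma\to\infty}\GWp(\mu,\delta_{\E[X]}) = 0
\]
for sub-Gaussian $\mu$ (and the analogous statement for $\nu$).

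This last limit is the main obstacle. Assuming WLOG $\E[X] = 0$, the plan is to produce a coupling of $\mu*\Gauss$ and $\cN(0,\sigma^2\I)$ whose $p$-cost vanishes as $\sigma \to \infty$. For $p = 2$, Talagrand's transport--entropy inequality $\mathsf{W}_2^2(\cdot,\cN(0,\sigma^2\I)) \le 2\sigma^2\,\mathrm{KL}(\cdot\,\|\,\cN(0,\sigma^2\I))$ reduces matters to a KL bound. A direct Taylor expansion of the relative density $\E[\exp(\langle X,y\rangle/\sigma^2 - |X|^2/(2\sigma^2))]$ at large $\sigma$, in which the nominal $O(1/\sigma^2)$ contributions cancel exactly because $\E[X] = 0$, yields $\mathrm{KL}(\mu*\Gauss\,\|\,\cN(0,\sigma^2\I)) = O(1/\sigma^4)$ under a second-moment assumption, hence $\mathsf{W}_2^{(\sigma)}(\mu,\delta_0) = O(1/\sigma) \to 0$. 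For $p \le 2$ this settles the claim via $\Wp \le \mathsf{W}_2$. The hard part is $p > 2$, where one cannot reduce back to $\mathsf{W}_2$; there I would build an explicit coupling from an approximate Brenier map $T_\sigma$ whose displacement is of order $|y|/\sigma^2$ (verifiable by perturbing around the Gaussian reference) and bound $\E[|T_\sigma(Y) - Y|^p]$ using sub-Gaussian moment control on $Y \sim \mu*\Gauss$ at scale $\sigma$, again yielding an $O(1/\sigma)$ rate and hence the desired limit.
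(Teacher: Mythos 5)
Your treatment of (i) and (ii) coincides with the paper's: monotonicity via the Gaussian semigroup decomposition $\cN_{\sigma_2}=\cN_{\sigma_1}*\cN_{\sqrt{\sigma_2^2-\sigma_1^2}}$ together with the contraction property of convolution (\cref{lem:contraction}), and continuity plus the $\sigma\to 0$ limit directly from the stability bound of \cref{lem:stability} (the paper splits off a separate right-continuity manipulation, but since the stability bound is two-sided your shortcut is fine).

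Part (iii) is where you differ from the paper, which does not prove this item but cites Corollary 2.4 of \citet{chen2020}. Your Jensen lower bound and the triangle-inequality reduction to showing $\lim_{\sigma\to\infty}\GWp(\mu,\delta_{\E[X]})=0$ are correct, and your argument for $1\le p\le 2$ essentially works: with $\E[X]=0$ one has $\chi^2(\mu*\Gauss\,\|\,\Gauss)=\E\bigl[e^{\langle X,X'\rangle/\sigma^2}\bigr]-1=O(\sigma^{-4})$ for $X,X'$ i.i.d.\ (the $\sigma^{-2}$ term vanishes by mean zero), hence $\mathrm{KL}(\mu*\Gauss\,\|\,\Gauss)=O(\sigma^{-4})$, and Talagrand's inequality for $\Gauss$ gives $\mathsf{W}_2^{(\sigma)}(\mu,\delta_0)=O(1/\sigma)$. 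Note, though, that justifying this expansion requires control of an exponential moment, so sub-Gaussianity (which the statement assumes) is what you should invoke rather than "a second-moment assumption."

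The genuine gap is $p>2$. The claim that there is an (approximate) Brenier map between $\mu*\Gauss$ and $\Gauss$ with displacement of order $|y|/\sigma^2$, uniformly enough to control $p$-th moments of the displacement, is exactly the hard quantitative content and is asserted rather than proved: $\mu*\Gauss$ need not be log-concave, so Caffarelli-type contraction or perturbative regularity results do not apply off the shelf, and cruder routes fail quantitatively---e.g.\ the weighted-TV bound of Villani (as used in \cref{thm:slow-rate}) scales like $\sigma^{p}\cdot\mathrm{TV}\sim\sigma^{p-2}$, which diverges for $p>2$, while interpolating $\Wp^p\le \mathsf{W}_2^2\cdot(\mathrm{diam})^{p-2}$ requires compact support. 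This missing step is precisely what the cited result of \citet{chen2020} supplies; as written, your proposal establishes (iii) only for $1\le p\le 2$.
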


\begin{remark}[Infinite smoothing]
A detailed study of $\GWp$ in the infinite smoothing regime (i.e., when $\sigma \to \infty$) is conducted in \cite{chen2020}. Therein, the authors prove Item 3 above and examine the convergence of $\GWp(\mu,\nu)$ to 0 when $\E[X]=\E[Y]$. For that case, they show that if $\mu$ and $\nu$ have matching moment tensors up to order $n$ (but not $n+1$), then $\mathsf{W}_2^{(\sigma)}(\mu,\nu) \asymp \sigma^{-n}$ as $\sigma \to \infty$.
\end{remark}

\cref{prop:smooth-Wp-dependence-on-sigma} guarantees the convergence of transport costs as $\sigma \to 0$. It is natural to ask whether optimal transport plans (i.e., couplings) that achieve these costs converge as well. We answer this question to the affirmative.%
 
\begin{proposition}[Convergence of transport plans]\label{prop:convergence-of-plans}
Fix $\mu,\nu \in \cP_p$ and let $(\sigma_k)_{k \in \N}$ be a sequence with $\sigma_k \searrow \sigma \geq 0$. For each $k \in \N$, let $\pi_k \in \Pi(\mu * \cN_{\sigma_k}, \nu*\cN_{\sigma_k})$ be an optimal coupling for $\mathsf{W}_p^{(\sigma_k)}(\mu,\nu)$. Then there exists $\pi \in \Pi(\mu*\Gauss, \nu*\Gauss)$ such that $\pi_k \stackrel{w}{\to} \pi$ as $k \to \infty$ (up to subsequences if $\sigma = 0$ or $p=1$), and $\pi$ is optimal for $\GWp(\mu,\nu)$.
\end{proposition}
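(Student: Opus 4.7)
The plan is to argue by tightness, extract a weak subsequential limit, verify it is a coupling of $\mu*\Gauss$ and $\nu*\Gauss$ with optimal transport cost, and then obtain full-sequence convergence (when $\sigma>0$ and $p>1$) from uniqueness of the optimal plan.

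First, I would establish tightness of $(\pi_k)_{k\in\N}$. Since $\cN_{\sigma_k}\stackrel{w}{\to}\Gauss$, the marginal sequences satisfy $\mu*\cN_{\sigma_k}\stackrel{w}{\to}\mu*\Gauss$ and $\nu*\cN_{\sigma_k}\stackrel{w}{\to}\nu*\Gauss$, so by Prokhorov's theorem both marginal sequences are tight. Tightness of the marginals implies tightness of $(\pi_k)$ on $\R^d\times\R^d$: given $\epsilon>0$, choose compacts $K_1,K_2\subset\R^d$ with $\sup_k(\mu*\cN_{\sigma_k})(K_1^c)<\epsilon/2$ and $\sup_k(\nu*\cN_{\sigma_k})(K_2^c)<\epsilon/2$; then $\pi_k(K_1\times K_2)\geq 1-\epsilon$. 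Applying Prokhorov again yields a subsequence, still denoted $(\pi_k)$, converging weakly to some Borel probability measure $\pi$ on $\R^d\times\R^d$. Continuity of the coordinate projections together with weak convergence shows that the marginals of $\pi$ are exactly $\mu*\Gauss$ and $\nu*\Gauss$, so $\pi\in\Pi(\mu*\Gauss,\nu*\Gauss)$.

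Next I would verify optimality of $\pi$. The cost $(x,y)\mapsto|x-y|^p$ is nonnegative and continuous, hence the map $\pi'\mapsto\int|x-y|^p\,\dd\pi'$ is lower semicontinuous under weak convergence. Combining this with \cref{lem:stability}, which gives $\Wp^{(\sigma_k)}(\mu,\nu)\to\GWp(\mu,\nu)$, yields
\[
\int|x-y|^p\,\dd\pi\leq\liminf_{k\to\infty}\int|x-y|^p\,\dd\pi_k=\liminf_{k\to\infty}\bigl(\Wp^{(\sigma_k)}(\mu,\nu)\bigr)^p=\GWp(\mu,\nu)^p,
\]
so $\pi$ attains the infimum in the definition of $\GWp(\mu,\nu)$ and is thus optimal.

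To remove the ``up to subsequences'' caveat when $\sigma>0$ and $p>1$, I would invoke uniqueness of the optimal coupling. Since Gaussian convolution produces a smooth Lebesgue density, both $\mu*\Gauss$ and $\nu*\Gauss$ are absolutely continuous, and for $p>1$ the cost $|x-y|^p$ is strictly convex; McCann's theorem (cf.\ Brenier's theorem and its extensions to general $p$) then guarantees a unique optimal plan $\pi^\star\in\Pi(\mu*\Gauss,\nu*\Gauss)$. Since every subsequence of $(\pi_k)$ admits a further weakly convergent sub-subsequence whose limit must coincide with the unique optimizer $\pi^\star$, the whole sequence converges weakly to $\pi^\star$. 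When $\sigma=0$, $\mu$ need not be absolutely continuous, and when $p=1$ strict convexity of the cost fails—in both regimes, uniqueness generically breaks down, accounting for the subsequential clause in the statement.

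The main obstacle is less the weak-convergence/tightness scaffolding (standard) and more the careful invocation of uniqueness under the mild non-degeneracy hypotheses $\sigma>0$, $p>1$; this requires citing a sufficiently general version of the Brenier--McCann existence/uniqueness theorem covering $p$-costs with only $p$th-moment marginals (rather than compact support or bounded density), which is available in the OT literature but must be quoted precisely.
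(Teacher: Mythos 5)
Your proposal is correct and follows essentially the same route as the paper: the paper's proof simply notes that the tightness/weak-limit/lower-semicontinuity argument of Theorem 4 in \cite{Goldfeld2020GOT} (which you reconstruct explicitly, using \cref{lem:stability} for $\mathsf{W}_p^{(\sigma_k)}\to\GWp$) only needs non-negativity and continuity of the cost, and then upgrades to full-sequence convergence for $p>1$, $\sigma>0$ via uniqueness of the optimal plan (Theorem 2.44 of \cite{villani2003}) together with Prokhorov, exactly as in your subsequence-of-subsequence step.
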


The proof  observes that the arguments for Theorem 4 of \cite{Goldfeld2020GOT} extend from $p=1$ to the general case with minor changes.

So far we have studied metric, topological, and limiting properties of $\GWp$. In \cref{sec:empirical} we explore its statistical behavior, when distributions are estimated from samples.
To that end, we now establish a relation between $\GWp$ and the smooth Sobolev IPM $\Gds$. 
This result is later used to study the empirical convergence under $\GWp$ using tools from empirical process theory (applied to the $\Gds$ upper~bound).

\begin{theorem}[Comparison between $\GWp$ and $\Gds$]
\label{thm:smooth-Wp-Sobolev-comparison}
Fix ${p > 1}$ and let $q$ be the conjugate index of $p$.
Then, for $X \sim \mu \in \cP_{p}$ with mean 0 and $\nu \in \cP$, we have
\begin{equation}
\GWp(\mu,\nu) \le p\, e^{\E[|X|^2]/(2q\sigma^2)}\,\Gds\big(\mu,\nu\big).
\label{eq:duality}
\end{equation}
\end{theorem}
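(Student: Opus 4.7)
The natural approach combines the Benamou--Brenier representation of $\Wp$ with the primal (vector-field) form of the dual Sobolev norm, using the mean-zero assumption on $\mu$ to control the density of $\mu*\Gauss$ from below by that of $\Gauss$. Write $\alpha := \mu*\Gauss$, $\beta := \nu*\Gauss$, and $\ell := \alpha - \beta$, so $\Gds(\mu,\nu) = \|\ell\|_{\dot{H}^{-1,p}(\Gauss)}$. Standard Hahn--Banach duality for this weighted Sobolev norm yields the primal description
\[
\|\ell\|_{\dot{H}^{-1,p}(\Gauss)} = \inf\bigl\{\|\xi\|_{L^p(\Gauss;\R^d)} : \nabla\cdot(\xi\,\Gauss) = \ell\bigr\},
\]
so it will suffice to show $\Wp(\alpha,\beta) \le p\,e^{\E[|X|^2]/(2q\sigma^2)}\|\xi\|_{L^p(\Gauss;\R^d)}$ for every admissible $\xi$ and then take the infimum.

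The crucial ingredient is a pointwise lower bound on the density of $\alpha$. Writing out the ratio and applying Jensen's inequality to the convex function $\exp$,
\[
\frac{(\mu*\varphi_\sigma)(x)}{\varphi_\sigma(x)} = \E\bigl[e^{(2\langle x, X\rangle - |X|^2)/(2\sigma^2)}\bigr] \ge e^{-\E[|X|^2]/(2\sigma^2)},
\]
where the final inequality uses $\E[X] = 0$. Thus $\alpha \ge c\,\Gauss$ as measures with $c := e^{-\E[|X|^2]/(2\sigma^2)}$, and in particular $c^{-1/q} = e^{\E[|X|^2]/(2q\sigma^2)}$ is precisely the exponential factor that should appear.

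Now I would invoke the $L^p$ Benamou--Brenier upper bound: for any curve $(\alpha_t)_{t \in [0,1]}$ connecting $\alpha$ to $\beta$ and any Borel velocity field $(v_t)$ solving the continuity equation $\partial_t\alpha_t + \nabla\cdot(\alpha_t v_t) = 0$,
\[
\Wp(\alpha,\beta) \le \int_0^1 \|v_t\|_{L^p(\alpha_t;\R^d)}\,dt.
\]
Take the linear interpolation $\alpha_t := (1-t)\alpha + t\beta$, and given $\xi$ with $\nabla\cdot(\xi\,\Gauss) = \alpha - \beta$, set $v_t := \xi\,\Gauss/\alpha_t$; then $\alpha_t v_t = \xi\,\Gauss$ is $t$-independent, and the continuity equation is satisfied since $\partial_t\alpha_t = \beta - \alpha = -\nabla\cdot(\xi\,\Gauss)$. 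Using $\alpha_t \ge (1-t)\alpha \ge (1-t)c\,\Gauss$,
\[
\|v_t\|_{L^p(\alpha_t)}^p = \int |\xi|^p\bigl(\Gauss/\alpha_t\bigr)^{p-1} d\Gauss \le \bigl((1-t)c\bigr)^{-(p-1)}\|\xi\|_{L^p(\Gauss)}^p,
\]
so $\|v_t\|_{L^p(\alpha_t)} \le ((1-t)c)^{-1/q}\|\xi\|_{L^p(\Gauss)}$. Since $\int_0^1 (1-t)^{-1/q}\,dt = q/(q-1) = p$, integrating yields
\[
\Wp(\alpha,\beta) \le p\,c^{-1/q}\|\xi\|_{L^p(\Gauss)} = p\,e^{\E[|X|^2]/(2q\sigma^2)}\|\xi\|_{L^p(\Gauss)},
\]
and infimizing over $\xi$ proves the theorem.

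The main technical hurdle is justifying the $L^p$ Benamou--Brenier bound for the candidate $v_t = \xi\,\Gauss/\alpha_t$, which is only a.e.\ defined and may lack smoothness. This can be handled either by approximating $\xi$ via smooth near-minimizers (arising, e.g., from solving an elliptic PDE with smoothed right-hand side) and passing to the limit, or by directly invoking the metric-derivative formulation of Benamou--Brenier (e.g., Ambrosio--Gigli--Savar\'e), which requires only the distributional continuity equation together with $v_t \in L^p(\alpha_t;\R^d)$; both conditions have already been verified.
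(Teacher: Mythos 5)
Your proposal is correct and follows essentially the same route as the paper: a Jensen-based lower bound $\mu*\varphi_\sigma \geq e^{-\E[|X|^2]/(2\sigma^2)}\varphi_\sigma$ (using the mean-zero assumption), combined with the Benamou--Brenier bound along the linear interpolation with a one-sided density lower bound, yielding the factor $p\,c^{-1/q}$. The only difference is that you re-derive the dynamic-formulation step (primal form of the dual Sobolev norm plus the velocity field $v_t=\xi\,\cN_\sigma/\alpha_t$) that the paper imports from Theorem 5.26 of \cite{dolbeault2009}, and your treatment of the needed regularity caveats is adequate.
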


The proof builds upon related inequalities established for standard $\Wp$ \cite{dolbeault2009, peyre2018, ledoux2019}, exploiting the metric structure of the Wasserstein space and the Benamou-Brenier dynamic formulation of optimal transport  \cite{benamou2000}. Namely, we note that $\Wp(\mu_0,\mu_1)$ is upper bounded by the length of any continuous path from $\mu_0$ to $\mu_1$ in $(\cP_p,\Wp)$ and examine the path $t \mapsto t\mu_1 + (1-t)\mu_0$ which interpolates linearly between the two densities.
The theorem follows upon applying the resulting bound to $\mu*\Gauss$ and $\nu*\Gauss$.
We also give a lower bound for $\GWp(\mu,\nu)$ using $\|(\mu - \nu)*\cN_{\sigma}\big\|_{\dot{H}^{-1,p}(\cN_{\sqrt{2}\sigma})}$, though the constant factor restricts its usefulness.

\begin{remark}
When $p=1$, one can show that $\mathsf{W}_1$ and the dual Sobolev norm $\| \cdot \|_{\dot{H}^{-1,1}(\gamma)}$ coincide \cite{dolbeault2009}. In particular, this implies that $\mathsf{W}_1^{(\sigma)}(\mu,\nu) = \mathsf{d}_1^{(\sigma)}(\mu,\nu)$. 
For larger $p$, the gap between $\GWp(\mu,\nu)$ and the upper bound given by \cref{thm:smooth-Wp-Sobolev-comparison} can grow quite large, so we view the comparison as a useful theoretical tool rather than a device for practical approximation guarantees.
\end{remark}

Finally, we establish some basic properties of $\Gds$.
\begin{proposition}[$\Gds$ dependence on $\sigma$]
\label{prop:Gds-sigma-dependence}
For $\mu,\nu \in \cP$, the following hold:
\begin{enumerate}[wide, labelindent=0pt, label=(\roman*)]
    \item $\lim_{\sigma \to 0} \Gds(\mu,\nu) = \infty$ for $\mu \neq \nu$;
    \item $\lim_{\sigma \to \infty} \mathsf{d}_2^{(\sigma)}(\mu,\nu) = |\E[X] - \E[Y]|$, for $X \sim \mu$ and $Y \sim \nu$ sub-Gaussian.
\end{enumerate}
\end{proposition}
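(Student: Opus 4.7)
The plan is to use the variational representation $\Gds(\mu,\nu) = \sup\{((\mu-\nu)*\Gauss)(f) : f \in C_0^\infty,\ \|\nabla f\|_{L^q(\Gauss)} \leq 1\}$, together with the identity $((\mu-\nu)*\Gauss)(f) = (\mu-\nu)(f*\varphi_\sigma)$ from the symmetry of $\varphi_\sigma$. For (i), I would produce a single $f \in C_0^\infty$ with $(\mu-\nu)(f) \neq 0$ and $\nabla f(0) = 0$. Then, as $\sigma \to 0$, the numerator $(\mu-\nu)(f*\varphi_\sigma) \to (\mu-\nu)(f) \neq 0$ by uniform convergence $f*\varphi_\sigma \to f$, while the denominator $\|\nabla f\|_{L^q(\Gauss)}^q = \int|\nabla f|^q\,\dd\Gauss \to |\nabla f(0)|^q = 0$ by $\Gauss \stackrel{w}{\to} \delta_0$ and the boundedness and continuity of $|\nabla f|^q$, forcing $\Gds(\mu,\nu) \to \infty$. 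Existence of $f$ reduces to a short case analysis: if $\mu$ and $\nu$ disagree on some Borel subset of $\R^d \setminus \{0\}$, an $f$ supported there works (with $\nabla f(0)=0$ automatically); otherwise $\mu-\nu = c\delta_0$ with $c\neq 0$, and any radial bump centered at the origin has $f(0)\neq 0$ and $\nabla f(0)=0$.

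For (ii), write $\delta = \E[X]-\E[Y]$ and prove matching lower and upper bounds. For the lower bound, I would plug in the truncated linear function $f_\sigma(x) = (\delta/|\delta|)\cdot x\cdot \chi(x/R_\sigma)$, where $\chi\in C_0^\infty$ equals $1$ on $B(0,1)$ and vanishes outside $B(0,2)$, with $R_\sigma = c\sigma\sqrt{\log \sigma}$ for $c$ large. The sub-Gaussian tails of $\mu,\nu$ and of $\Gauss$ make the cutoff negligible in the limit, so dominated convergence gives $((\mu-\nu)*\Gauss)(f_\sigma) \to (\delta/|\delta|)\cdot \delta = |\delta|$ and $\|\nabla f_\sigma\|_{L^2(\Gauss)} \to 1$, hence $\mathsf{d}_2^{(\sigma)}(\mu,\nu) \geq |\delta| - o(1)$.

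For the matching upper bound, set $g = f*\varphi_\sigma$ and Taylor-expand at the origin to write $(\mu-\nu)(g) = \nabla g(0)\cdot\delta + (\mu-\nu)(R_g)$ with $R_g(x) = g(x) - g(0) - \nabla g(0)\cdot x$. Symmetry of $\varphi_\sigma$ identifies $\nabla g(0) = \E_{Z\sim\Gauss}[\nabla f(Z)]$, so Cauchy-Schwarz yields the clean main-term bound $|\nabla g(0)\cdot\delta| \leq \|\nabla f\|_{L^2(\Gauss)}|\delta| \leq |\delta|$. The hard part will be showing $(\mu-\nu)(R_g) = o(1)$ uniformly over admissible $f$: the integration-by-parts estimate $|\nabla^2 g(x)|^2 \leq (d/\sigma^2)\|\nabla f\|_{L^2(\cN(x,\sigma^2 \I))}^2$ brings in translated-Gaussian norms that are not controlled by $\|\nabla f\|_{L^2(\Gauss)}$, so closing the argument will require exploiting the sub-Gaussian concentration of $\mu,\nu$ on scale $o(\sigma)$, where $\cN(x,\sigma^2\I) \approx \Gauss$.
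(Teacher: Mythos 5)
Part (i) of your proposal is correct and rests on the same mechanism as the paper's proof: the constraint $\|f\|_{\dot H^{1,q}(\Gauss)}\le 1$ degenerates as $\sigma\to 0$ for test functions whose gradient is negligible near the origin, so the supremum blows up under rescaling. The paper scales (an indicator of) a continuity set bounded away from the origin, while you rescale a fixed $f\in C_0^\infty$ with $(\mu-\nu)(f)\neq 0$ and $\nabla f(0)=0$ by $1/\|\nabla f\|_{L^q(\Gauss)}$; your version is, if anything, the more careful one. (Your second case $\mu-\nu=c\delta_0$, $c\neq 0$, is vacuous because $\mu-\nu$ has total mass zero, but including it is harmless.)

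Part (ii) is where the genuine gap lies, and you flag it yourself: the upper bound is not closed. The paper sidesteps the whole difficulty by invoking \cref{thm:dual-Sobolev-RKHS}: $\mathsf{d}_2^{(\sigma)}$ is an MMD with kernel $\kappa^{(\sigma)}(x,y)=\langle x,y\rangle+O(\sigma^{-2})$ pointwise, and dominated convergence under sub-Gaussianity gives $\mathsf{d}_2^{(\sigma)}(\mu,\nu)^2\to|\E[X]-\E[Y]|^2$ in two lines. In your direct route, the lower bound and the main-term estimate $|\nabla g(0)\cdot\delta|\le\|\nabla f\|_{L^2(\Gauss)}|\delta|\le|\delta|$ are fine, but the claim $\sup_f|(\mu-\nu)(R_g)|\to 0$ over the Sobolev unit ball is currently a hope rather than a proof: as you note, your Hessian bound involves $\|\nabla f\|_{L^2(\cN(x,\sigma^2\I))}$, which the constraint does not control. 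The missing ingredient is the Gaussian change-of-measure trick that drives the paper's \cref{lem:derivative}: writing $\nabla^2(f*\varphi_\sigma)(x)=-\sigma^{-2}\int\nabla f(y)\otimes(x-y)\,\varphi_\sigma(x-y)\,\dd y$ and applying Cauchy--Schwarz with weight $\varphi_\sigma(y)$, together with the identity $\varphi_\sigma(x-y)^2/\varphi_\sigma(y)=e^{|x|^2/\sigma^2}\varphi_\sigma(y-2x)$, yields $|\nabla^2(f*\varphi_\sigma)(x)|\le\sigma^{-2}e^{|x|^2/(2\sigma^2)}\sqrt{|x|^2+d\sigma^2}$ uniformly over $\|\nabla f\|_{L^2(\Gauss)}\le 1$. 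Hence $|R_g(x)|\le\tfrac12|x|^2e^{|x|^2/(2\sigma^2)}(|x|+\sqrt{d}\,\sigma)\sigma^{-2}$, and sub-Gaussianity of $\mu,\nu$ gives $(\mu-\nu)(R_g)=O(1/\sigma)$ uniformly in $f$ once $\sigma$ exceeds a constant multiple of the sub-Gaussian parameter. With this estimate your argument closes (and, unlike the paper's kernel computation, it would extend to general $p>1$), but as written the upper bound, and hence (ii), is incomplete.
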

We focus on $p=2$ for (ii) due to a convenient MMD formulation for $\mathsf{d}_2^{(\sigma)}$ established in \cref{sec:computation}.

\section{Empirical Approximations}\label{sec:empirical}

Fix  $p > 1$, $\sigma>0$, and let $\mu \in \cP_p$ with $X \sim \mu$.
Given independently and identically distributed (i.i.d.) samples $X_1, \dots, X_n \sim \mu$ with empirical distribution $\hat{\mu}_n := n^{-1} \sum_{i=1}^n \delta_{X_i}$, we study the convergence rate of $\E\big[\Wp^{(\sigma)}(\hat{\mu}_n,\mu)\big]$ to zero. To start, we observe that elementary techniques imply $\E\big[\Wp^{(\sigma)}(\hat{\mu}_n,\mu)\big] = O(n^{-1/(2p)})$ under mild conditions on $\mu$. Although the rate $n^{-1/(2p)}$ is  dimension-free, its dependence on $p$ is sub-optimal.

\begin{theorem}[Slow rate]\label{thm:slow-rate}
If $X\sim\mu$ satisfies
\begin{equation}
\int_{0}^{\infty} r^{d+p-1}\sqrt{\Prob(|X| > r)} \dd r < \infty,
\label{eq:poly-moment}
\end{equation}
then $\E\big[\GWp(\hat{\mu}_n,\mu)\big] = O(n^{-1/(2p)})$.
Condition (\ref{eq:poly-moment}) holds if $\mu$ has finite $(2d+2p+\epsilon)$-th moment for some $\epsilon > 0$. 
\end{theorem}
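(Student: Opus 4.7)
The plan is to reduce $\GWp^p(\hat{\mu}_n,\mu)$ to a weighted $L^1$ distance between the densities of the smoothed measures $\hat{\mu}_n*\Gauss$ and $\mu*\Gauss$, then control that distance via the pointwise variance of the kernel density estimator $f_n(x) := n^{-1}\sum_{i=1}^n \varphi_\sigma(x-X_i)$, which is exactly the density of $\hat{\mu}_n*\Gauss$.

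First I would establish the elementary coupling bound
\[
\Wp^p(\alpha,\beta) \le 2^{p-1}\int_{\R^d} |x|^p\, \dd|\alpha-\beta|(x), \qquad \alpha,\beta \in \cP_p,
\]
via the ``trivial'' coupling that places $\alpha \wedge \beta$ on the diagonal and independently couples the mutually singular residuals $\alpha',\beta'$ (equal in mass, since $\alpha,\beta$ are probabilities); applying $|x-y|^p \le 2^{p-1}(|x|^p+|y|^p)$ to the product part yields the claim. Specializing to $\alpha = \hat{\mu}_n*\Gauss$ and $\beta = \mu*\Gauss$, which are absolutely continuous with densities $f_n$ and $f_\mu := \mu*\varphi_\sigma$, taking expectations with Fubini, and using Jensen on the variance of the i.i.d.\ average to get $\E|f_n(x)-f_\mu(x)| \le n^{-1/2}\sqrt{\E[\varphi_\sigma(x-X)^2]}$ gives
\[
\E\bigl[\GWp^p(\hat{\mu}_n,\mu)\bigr] \le 2^{p-1} n^{-1/2} \int_{\R^d} |x|^p\, \sqrt{\E[\varphi_\sigma(x-X)^2]}\, \dd x.
\]
The Gaussian product identity $\varphi_\sigma(z)^2 = (2\sqrt{\pi}\,\sigma)^{-d} \varphi_{\sigma/\sqrt{2}}(z)$ rewrites $\E[\varphi_\sigma(x-X)^2]$ as $(2\sqrt{\pi}\,\sigma)^{-d} \tilde{f}(x)$, where $\tilde{f}$ is the density of $\mu * \cN(0, (\sigma^2/2)\I)$. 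A final application of Jensen in the form $\E[W] \le (\E[W^p])^{1/p}$ then yields $\E[\GWp(\hat{\mu}_n,\mu)] \lesssim n^{-1/(2p)}$, contingent on finiteness of $I := \int |x|^p \sqrt{\tilde{f}(x)}\, \dd x$.

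The last step is to verify $I < \infty$ under (\ref{eq:poly-moment}). I would split $\tilde{f}(x) = \int \varphi_{\sigma/\sqrt{2}}(x-y)\, \dd\mu(y)$ over $\{|y| \le |x|/2\}$ and its complement: on the first region $|x-y|\ge |x|/2$ produces a Gaussian tail in $x$ whose square root still integrates against $|x|^p$; on the complement $\tilde{f}(x)$ is bounded by the maximum of $\varphi_{\sigma/\sqrt{2}}$ times $\Prob(|X|>|x|/2)$, and passing to polar coordinates followed by the substitution $r \mapsto 2r$ reduces the contribution to a multiple of $\int_0^\infty r^{d+p-1}\sqrt{\Prob(|X|>r)}\, \dd r$, which is finite by assumption. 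That $\E|X|^{2d+2p+\epsilon}<\infty$ suffices then follows from applying Markov's inequality inside (\ref{eq:poly-moment}). The main technical obstacle is this tail-balance step; the preceding ingredients are standard, but combining them to reach precisely the stated moment threshold requires careful bookkeeping.
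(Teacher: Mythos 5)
Your proposal is correct and follows essentially the same route as the paper's proof: the maximal-coupling bound $\Wp^p(\alpha,\beta) \le 2^{p-1}\int |x|^p\,\dd|\alpha-\beta|$ (the paper cites Proposition 7.10 of Villani, which you re-derive), Jensen's inequality to extract the $n^{-1/2}$ factor from the pointwise variance of the kernel density estimate, and the same split of the resulting integral at $|y| \le |x|/2$ followed by polar coordinates and Markov's inequality. The only differences are cosmetic — rewriting $\varphi_\sigma^2$ as a rescaled $\varphi_{\sigma/\sqrt{2}}$ rather than bounding $\E[\varphi_\sigma(x-X)^2]$ directly, and the order in which the $1/p$-power Jensen step is applied.
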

The proof 
follows by coupling $\mu$ and $\hat{\mu}_n$ via the maximal coupling. This bounds $\big(\GWp(\hat{\mu}_n,\mu)\big)^p$ from above by a weighted TV distance, which converges as $n^{-1/2}$, provided that the above moment condition holds.
This proof technique was previously applied in \cite{goldfeld2019} to achieve the same rate when $p=1$.

We next turn to show that the $n^{-1/2}$ rate is attainable for $\GWp(\hat{\mu}_n,\mu)$ itself (rather than for its $p$th power).
To this end, we first establish a limit distribution result for the empirical smooth Sobolev IPM $\Gds(\hat{\mu}_n,\mu)$. This, in turn, yields the desired rate for $\E\big[\GWp(\hat{\mu}_n,\mu)\big]$ via the comparison from \cref{thm:smooth-Wp-Sobolev-comparison}.
Recall the function class $\cF = \{  f \in C_0^\infty : \|f\|_{\dot{H}^{1,q}(\Gauss)} \leq 1 \}$.

\begin{theorem}[Limit distribution for empirical $\Gds$]
\label{thm:limit-distribution}
Suppose there exists $\theta>p-1$ for which $X\sim\mu$ satisfies
\begin{equation}
\label{eq:moment-condition}
\int_{0}^{\infty} e^{\frac{\theta r^2}{2\sigma^2}} \sqrt{\Prob (|X| > r)} \dd r < \infty.
\end{equation}
Then $\sqrt{n}\Gds(\hat{\mu}_n,\mspace{-2mu}\mu)\mspace{-3mu}\stackrel{d}{\to}\mspace{-3mu}\|G \|_{\infty,\cF}$ as $n\mspace{-2mu}\to\mspace{-2mu}\infty$,
where \ $G = (G(f))_{f \in \cF}$ is a tight Gaussian process in $\ell^{\infty}(\cF)$ with mean zero and covariance function $\Cov (G(f),G(g)) = \Cov (f*\varphi_{\sigma}(X),g*\varphi_{\sigma}(X))$. %
\end{theorem}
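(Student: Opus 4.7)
The plan is to recast $\sqrt{n}\,\Gds(\hat{\mu}_n,\mu)$ as the supremum of an empirical process indexed by a smooth function class, show that this class is $\mu$-Donsker, and conclude via the continuous mapping theorem. By Fubini and symmetry of $\varphi_\sigma$, any finite signed measure $\eta$ satisfies $(\eta*\Gauss)(f) = \eta(f*\varphi_\sigma)$, hence
\[
\sqrt{n}\,\Gds(\hat{\mu}_n,\mu) \;=\; \sup_{f\in\cF}\sqrt{n}(\hat{\mu}_n - \mu)(f*\varphi_\sigma) \;=\; \|\mathbb{G}_n\|_{\infty,\cG},
\]
where $\mathbb{G}_n := \sqrt{n}(\hat{\mu}_n-\mu)$ and $\cG := \cF * \varphi_\sigma$; this supremum coincides with its absolute-value version because $\cF$ is closed under negation. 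Once $\cG$ is shown to be $\mu$-Donsker, $\mathbb{G}_n$ converges weakly in $\ell^\infty(\cG)$ to a tight centered Gaussian process $\tilde G$ with covariance $\Cov(\tilde G(h_1),\tilde G(h_2)) = \Cov(h_1(X),h_2(X))$; setting $G(f) := \tilde G(f*\varphi_\sigma)$ and applying the continuous mapping theorem to the sup-norm delivers the claim.

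The first technical step is to produce a square-integrable envelope for $\cG$. Since empirical processes are invariant under constant shifts of the test functions, I may replace each $g := f*\varphi_\sigma$ by $\tilde g := g - g(0)$. Writing $\tilde g(x) = \int_0^1\langle\nabla g(tx),x\rangle\,\dd t$ and $\nabla g = (\nabla f)*\varphi_\sigma$, Hölder's inequality against the Gaussian density gives
\[
|(\nabla f)*\varphi_\sigma(y)| \;\le\; \|\nabla f\|_{L^q(\Gauss;\R^d)}\left(\int \varphi_\sigma(y-z)^p\,\varphi_\sigma(z)^{1-p}\,\dd z\right)^{1/p}\!,
\]
and completing the square in the exponent evaluates the Gaussian integral explicitly, yielding the uniform bound $|\nabla g(y)|\le e^{(p-1)|y|^2/(2\sigma^2)}$ over $f\in\cF$. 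Consequently $|\tilde g(x)|\le F(x) := |x|\,e^{(p-1)|x|^2/(2\sigma^2)}$, and a tail-integral representation of $\mu(F^2)$ combined with Cauchy-Schwarz shows that hypothesis (\ref{eq:moment-condition}) with $\theta > p-1$ is precisely strong enough to give $\mu(F^2) < \infty$.

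The remaining step is controlling the bracketing entropy $N_{[\,]}(\epsilon,\cG,L^2(\mu))$. Iterating the same Hölder estimate on higher-order derivatives shows that on each ball $B_R := \{|x|\le R\}$, the restriction $\cG|_{B_R}$ is uniformly bounded in $C^k(B_R)$ with norm at most $c_k\,e^{c_k R^2/\sigma^2}$ for every $k\in\N$; the Kolmogorov--Tikhomirov covering estimate then gives polynomial bracketing numbers on each ball, which one stitches together via a dyadic decomposition $\R^d = B_{R_0}\cup\bigcup_k(B_{R_{k+1}}\setminus B_{R_k})$, with the envelope $F$ controlling the tail contribution. A standard bracketing CLT (e.g., van der Vaart and Wellner, Theorem~2.5.6) then delivers the Donsker conclusion. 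I expect the main obstacle to be balancing these two effects in the tail: the local bracketing constant grows like $e^{c R^2/\sigma^2}$, so the radii $R_k$ must be tuned to offset this growth against the Gaussian tail of $F$, leaving little slack and requiring full use of the gap $\theta > p-1$ afforded by (\ref{eq:moment-condition}).
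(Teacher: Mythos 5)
Your overall architecture is the same as the paper's: rewrite $\sqrt{n}\,\Gds(\hat{\mu}_n,\mu)$ as the sup of the empirical process over $\cF*\varphi_\sigma$, prove that this class is $\mu$-Donsker by combining uniform local smoothness bounds (Gaussian-growth in $|x|$, obtained by H\"older against $\varphi_\sigma$) with a partition of $\R^d$ and the tail condition \eqref{eq:moment-condition}, then conclude via weak convergence in $\ell^\infty$. Your first-order bound $|\nabla(f*\varphi_\sigma)(y)|\le e^{(p-1)|y|^2/(2\sigma^2)}$ is exactly the paper's, and centering at the origin with the fundamental theorem of calculus to build the envelope is a pleasant small variation that avoids the $q$-Poincar\'e inequality the paper invokes for the zeroth-order term; using bracketing entropy (Kolmogorov--Tikhomirov plus a stitched partition) instead of the paper's appeal to van der Vaart's (1996) weighted-partition Donsker theorem is a cosmetic difference.

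The genuine gap is in the higher-derivative bound, which is precisely the step where the sharpness of $\theta>p-1$ enters. You assert that ``iterating the same H\"older estimate on higher-order derivatives'' gives $\|g\|_{C^k(B_R)}\le c_k e^{c_k R^2/\sigma^2}$ uniformly over $\cF*\varphi_\sigma$. First, you cannot iterate on derivatives of $f$: the class only controls $\|\nabla f\|_{L^q(\Gauss;\R^d)}$, so the extra derivatives must be moved onto the kernel, writing $\partial^k\partial_i(f*\varphi_\sigma)=(\partial_i f)*\partial^k\varphi_\sigma$ and picking up Hermite-polynomial factors. Second, and more importantly, the form $e^{c_k R^2/\sigma^2}$ with an unspecified constant is not strong enough: in the tail-balancing step the local norm on the annulus at radius $R$ is multiplied by (roughly) $\sqrt{\Prob(|X|>R)}$, and \eqref{eq:moment-condition} only beats exponential growth at rate strictly below $\theta/(2\sigma^2)$. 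So you need the exponent to be $(p-1)(1+\eta)|x|^2/(2\sigma^2)$ with $\eta$ arbitrarily small, \emph{uniformly in the derivative order} $k\le\lfloor d/2\rfloor+1$; any order-dependent inflation of the rate would demand a stronger moment condition than the one stated. The paper's Lemma A.4 secures exactly this: the Hermite factor is bounded by $1+|x-y|^{\bar k}$ and absorbed by slightly shrinking the Gaussian variance (replacing $\varphi_\sigma$ by $\varphi_{\sigma(1+\eta)^{-1/2}}$ inside the H\"older step), which perturbs the exponential rate only by the factor $(1+\eta)$ while pushing all $k$-dependence into the prefactor. Your proposal acknowledges that the balance is tight, but as written it neither derives the order-independent rate nor gives a mechanism that would produce it, so the Donsker conclusion under \eqref{eq:moment-condition} does not yet follow.
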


\begin{corollary}[Fast rate]
\label{cor:fast-rate}
Under the conditions of \cref{thm:limit-distribution}, 
we have $\lim_{n \to \infty}\mspace{-3mu} \sqrt{n}\E\big[\Gds(\hat{\mu}_n,\mu)\big]\mspace{-3mu}=\mspace{-3mu}\E\mspace{-3mu}\big[\| G \|_{\infty,\cF}\big]\mspace{-3mu}<\mspace{-3mu}\infty$. Consequently, $\E\big[\GWp(\hat{\mu}_n,\mu)\big] = O(n^{-1/2})$.
\end{corollary}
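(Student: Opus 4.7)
The plan is to first upgrade the weak convergence of $\sqrt{n}\,\Gds(\hat{\mu}_n, \mu)$ supplied by \cref{thm:limit-distribution} into convergence of means via uniform integrability, and then to transfer the resulting bound to $\GWp$ using the comparison of \cref{thm:smooth-Wp-Sobolev-comparison}.

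For the first step, note that $\sqrt{n}\,\Gds(\hat{\mu}_n, \mu) = \|\sqrt{n}(\hat{\mu}_n - \mu)\|_{\infty,\cF * \varphi_\sigma}$ is the supremum of the empirical process over the class $\cF * \varphi_\sigma$. Weak convergence alone does not imply convergence of expectations, so I would additionally establish uniform integrability of $\{\sqrt{n}\,\Gds(\hat{\mu}_n, \mu)\}_{n \geq 1}$. Via a standard symmetrization/envelope inequality, one has
\[
\sup_n \E\big[\big(\sqrt{n}\,\Gds(\hat{\mu}_n,\mu)\big)^2\big] \lesssim \int F^2 \dd \mu,
\]
where $F$ is an envelope of $\cF * \varphi_\sigma$. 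Because each $f \in \cF$ is controlled by $\|f\|_{\dot{H}^{1,q}(\Gauss)} \leq 1$, one can bound $|f * \varphi_\sigma(x)|$ through Gaussian tail integrals; the moment condition~\eqref{eq:moment-condition} used to prove \cref{thm:limit-distribution} already forces $F \in L^2(\mu)$, so the same envelope control is available here. With uniform $L^2$-boundedness in hand, the weak limit from \cref{thm:limit-distribution} upgrades to
\[
\sqrt{n}\,\E\big[\Gds(\hat{\mu}_n, \mu)\big] \longrightarrow \E\big[\|G\|_{\infty,\cF}\big] < \infty.
\]

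To transfer this rate to $\GWp$, I would invoke \cref{thm:smooth-Wp-Sobolev-comparison}, which requires its first argument to have zero mean. To meet this hypothesis, I would center: let $m = \E[X]$, write $\mu_0$ for the pushforward of $\mu$ under $x \mapsto x - m$, and let $\hat{\mu}_{n,0}$ be the empirical measure of $X_1 - m, \dots, X_n - m$. Translation commutes with Gaussian convolution and the $\Wp$ cost, so $\GWp(\mu, \hat{\mu}_n) = \GWp(\mu_0, \hat{\mu}_{n,0})$. Applying the comparison theorem with the centered measure $\mu_0$ as the first argument gives
\[
\GWp(\hat{\mu}_n, \mu) \leq p\, e^{\E[|X - m|^2]/(2q\sigma^2)}\, \Gds(\mu_0, \hat{\mu}_{n,0}).
\]
The prefactor is a finite constant depending only on $\mu$, $p$, and $\sigma$; since the moment condition~\eqref{eq:moment-condition} on $\mu$ clearly carries over to the translate $\mu_0$, applying the first part of the corollary to $\mu_0$ yields $\E[\Gds(\mu_0, \hat{\mu}_{n,0})] = O(n^{-1/2})$ and hence $\E[\GWp(\hat{\mu}_n, \mu)] = O(n^{-1/2})$.

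The main obstacle is the uniform integrability step. While \cref{thm:limit-distribution} delivers the right distributional limit, promoting this to $L^1$ convergence needs a quantitative moment bound holding uniformly in $n$; this forces a careful envelope argument for the class $\cF * \varphi_\sigma$ under $\mu$, and it is precisely the moment condition~\eqref{eq:moment-condition} that makes that envelope square-integrable. The centering reduction is routine given the joint translation invariance of $\GWp$.
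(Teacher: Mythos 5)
Your overall route coincides with the paper's: upgrade the weak limit of \cref{thm:limit-distribution} to convergence of expectations via uniform integrability, then pass to $\GWp$ through \cref{thm:smooth-Wp-Sobolev-comparison} after centering $\mu$ and $\hat{\mu}_n$ by the mean of $\mu$. The second half of your argument is exactly the paper's (and your remark that condition \eqref{eq:moment-condition} survives the translation is correct, since $\theta>p-1$ is strict and the linear cross term can be absorbed by shrinking $\theta$ slightly), so that part stands.

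The gap is in how you justify uniform integrability. The inequality $\sup_n \E\big[\big(\sqrt{n}\,\Gds(\hat{\mu}_n,\mu)\big)^2\big]\lesssim \int F^2\,\dd\mu$ is not a ``standard symmetrization/envelope inequality'': a square-integrable envelope alone never controls the second moment of the supremum of an empirical process over an infinite class. For instance, for the class of indicators of all finite subsets of $[0,1]$ under the uniform law, the envelope is $1$ while $\big\|\sqrt{n}(\hat{\mu}_n-\mu)\big\|_{\infty,\cF}\ge\sqrt{n}$, so no bound of that form can hold without further structure. What makes the step true here is the Donsker property of $\cF*\varphi_\sigma$ established in the proof of \cref{thm:limit-distribution}: for a Donsker class with envelope in $L^2(\mu)$, the suprema $\sqrt{n}\,\Gds(\hat{\mu}_n,\mu)$ are asymptotically uniformly integrable via a Hoffmann--J{\o}rgensen-type argument (stochastic boundedness from Donskerness plus the envelope moment), which is precisely what the paper's proof packages by citing Lemma 2.3.11 of van der Vaart and Wellner (1996); finiteness of $\E\big[\|G\|_{\infty,\cF}\big]$ then follows from their Proposition A.2.3 (or from Fatou once uniform integrability is in hand). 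Your observation that \eqref{eq:moment-condition} makes the envelope $F(x)\asymp \exp\big((p-1)|x|^2/(2\sigma^2)\big)$ square-integrable is correct and is indeed the needed hypothesis, but it must be combined with the Donsker property rather than symmetrization alone; as written, the key display in your first step asserts a false general fact.
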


The proof of \cref{thm:limit-distribution} shows that
the smoothed function class $\cF * \varphi_\sigma = \{ f* \varphi_{\sigma} : f \in \cF \}$ is $\mu$-Donsker. 
Specifically, we prove that functions in $\cF * \varphi_\sigma$ are smooth with derivatives uniformly bounded on domains within a fixed radius of the origin. Using these bounds, we apply techniques from empirical process theory to establish the Donsker property. 
Importantly, the preceding argument hinges on the convolution with the smooth Gaussian density and does \emph{not} hold for the unsmoothed function class.
No mean zero requirement appears because we can center $\hat{\mu}_n$ and $\mu$ by the mean of $\mu$.

Condition \eqref{eq:moment-condition} requires that $\Prob(|X| > r)\to 0$ faster than $e^{-Cr^2}$ as $r \to \infty$ for some $C > (p-1)/\sigma^2$, which in turn requires $|X|$ to be sub-Gaussian.
The requirement is trivially satisfied if $\mu$ is compactly supported.
We can also relate Condition (\ref{eq:moment-condition}) to a more standard notion of sub-Gaussianity for random vectors.

\begin{definition}[Sub-Gaussian distribution]
Let $Y\sim\nu\in\cP$ with $\E[|Y|]<\infty$. We say that $\nu$ or $Y$ is \emph{$\beta$-sub-Gaussian} for $\beta \ge 0$ if $\E[\exp ( \langle \alpha, Y - \E[Y]\rangle)] \le \exp (\beta|\alpha|^2/2)$ for all $\alpha \in \R^{d}$. 
\end{definition}

\begin{proposition}[Sub-Gaussianity implies \eqref{eq:moment-condition}]
\label{lem:subGaussian}
If $\mu$ is $\beta$-sub-Gaussian with $\beta < \sigma/\sqrt{2(p-1)}$, then \eqref{eq:moment-condition} holds. 
\end{proposition}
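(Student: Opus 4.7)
The plan is to upgrade the sub-Gaussian MGF bound into a Gaussian-type tail estimate on $|X|$, and then verify \eqref{eq:moment-condition} by comparing this tail's decay against the integrand's growth $e^{\theta r^2/(2\sigma^2)}$, choosing $\theta$ strictly above $p-1$ but within the sharp window afforded by the tail.

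First, I would establish a squared-norm exponential moment bound via a Gaussian tensorization trick. Introducing an independent standard Gaussian $Z \sim \cN(0,\I)$ and using the identity $e^{\lambda |X - \E[X]|^2} = \E_Z\bigl[e^{\sqrt{2\lambda}\,\langle Z, X - \E[X]\rangle}\bigr]$, Fubini together with the sub-Gaussian hypothesis (applied to the inner conditional integral with $\alpha = \sqrt{2\lambda}Z$) yields
\[
\E\bigl[e^{\lambda |X - \E[X]|^2}\bigr] \leq \E_Z\bigl[e^{\beta\lambda|Z|^2}\bigr] = (1 - 2\beta\lambda)^{-d/2}
\]
for $\lambda$ below a critical threshold. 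A Chernoff bound in the squared norm then converts this into the Gaussian tail inequality $\Prob(|X - \E[X]| > t) \leq C\, e^{-\lambda t^2}$ for any such $\lambda$.

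To transfer the tail from $|X - \E[X]|$ to $|X|$, I would invoke the triangle inequality with an arbitrarily small slack: since $\E[|X|] < \infty$, for any $\eta > 0$ and any $r$ large enough that $|\E[X]| \leq \eta r$, the event $|X| > r$ implies $|X - \E[X]| > (1 - \eta) r$. Hence $\Prob(|X| > r) \leq C' e^{-(1-\eta)^2 \lambda r^2}$ for all sufficiently large $r$. Taking the square root and substituting into \eqref{eq:moment-condition}, the integrand decays like $\exp\bigl((\theta/(2\sigma^2) - (1-\eta)^2\lambda/2)\, r^2\bigr)$ at infinity, while the bounded portion for small $r$ is trivially integrable. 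Pushing $\lambda$ up to its critical threshold and $\eta$ down to $0$, the coefficient of $r^2$ can be made negative for some $\theta > p-1$ precisely when $\beta$ satisfies the hypothesized upper bound; choosing any such $\theta$ closes the argument.

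The main technical obstacle is obtaining a tight constant in the squared-moment bound of the first step. Crude alternatives -- splitting $|X|^2 \leq 2|X - \E[X]|^2 + 2|\E[X]|^2$, or bounding $|X - \E[X]|$ by a supremum over an $\epsilon$-net of the sphere and union-bounding -- lose a factor of $2$ in the admissible $\lambda$ or introduce a dimension-dependent prefactor that is not absorbed by the subsequent $(1-\eta)$-perturbation. The Gaussian tensorization route is essentially canonical and extracts the sharp sub-Gaussian-to-sub-exponential constant needed to reach the stated range of $\beta$.
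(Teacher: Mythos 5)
Your proposal is correct and takes essentially the same route as the paper: sub-Gaussianity yields $\E\big[e^{\lambda|X|^2}\big]<\infty$ for $\lambda$ below a critical threshold, Markov/Chernoff gives a Gaussian tail, and substituting into \eqref{eq:moment-condition} one picks $\theta\in\big(p-1,\ \sigma^2/(2\beta^2)\big)$, a window that is nonempty exactly under the hypothesized bound on $\beta$; the paper simply centers $X$ WLOG and quotes the exponential squared-moment bound rather than deriving it by Gaussian tensorization. The one detail to tidy is the constant in your tensorization step: to land on the stated threshold $\beta<\sigma/\sqrt{2(p-1)}$ the sub-Gaussian parameter must enter as $\beta^2$ (i.e.\ $\E_Z\big[e^{\beta^2\lambda|Z|^2}\big]=(1-2\beta^2\lambda)^{-d/2}$, as in the paper's proof), whereas your $\E_Z\big[e^{\beta\lambda|Z|^2}\big]$ follows the paper's definition as literally written and would instead produce the window $\theta<\sigma^2/(2\beta)$.
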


Next, we consider the concentration of $\GWp(\hat{\mu}_n,\mu)$.

\begin{proposition}[Concentration inequality]
\label{prop:concentration1}
If $\mu$ has compact support, then, for all $t > 0$ and $n \in \N$, we have
\[
\Prob\left(\GWp(\hat{\mu}_n, \mu) \geq Cn^{-1/2} + t\right) \leq \exp(-cnt^2)
\]
with constants $C,c$ independent of $n$ and $t$.
\end{proposition}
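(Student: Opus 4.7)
The plan is to reduce concentration of $\GWp(\hat\mu_n, \mu)$ to that of the smooth Sobolev IPM $\Gds(\hat\mu_n, \mu)$ via \cref{thm:smooth-Wp-Sobolev-comparison}, and then establish concentration of $\Gds$ using McDiarmid's bounded differences inequality together with the expectation bound from \cref{cor:fast-rate}. By translation-invariance of $\GWp$, I may assume $\mu$ has mean zero (this preserves compact support). Then \cref{thm:smooth-Wp-Sobolev-comparison} yields $\GWp(\hat\mu_n, \mu) \le C_1\, \Gds(\hat\mu_n, \mu)$ with $C_1 := p\,e^{\E|X|^2/(2q\sigma^2)} < \infty$ since $\E|X|^2$ is bounded. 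Moreover, compactly supported distributions are sub-Gaussian with arbitrarily small parameter, so \eqref{eq:moment-condition} holds and \cref{cor:fast-rate} gives $\sqrt{n}\,\E[\Gds(\hat\mu_n, \mu)] \to \E\|G\|_{\infty, \cF}$, whence $\E[\Gds(\hat\mu_n, \mu)] \le C_2 n^{-1/2}$ for all $n$.

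Next I apply McDiarmid's bounded differences inequality to $F(X_1, \ldots, X_n) := \Gds(\hat\mu_n, \mu) = \sup_{f \in \cF}(\hat\mu_n - \mu)(f*\varphi_\sigma)$. A standard sup-over-sup bound shows that swapping $X_i$ with an independent copy $X_i'$ alters $F$ by at most $n^{-1}\sup_{f \in \cF} |(f*\varphi_\sigma)(X_i) - (f*\varphi_\sigma)(X_i')|$. Assuming $\mu$ is supported on the ball of radius $R$ about the origin, both $X_i$ and $X_i'$ lie in this ball, so this expression is bounded by $2RL/n$, where $L := \sup_{f \in \cF,\, |z| \le R} |\nabla(f*\varphi_\sigma)(z)|$ is a uniform Lipschitz constant for $\cF * \varphi_\sigma$ over the ball. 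The crucial technical step is to show $L < \infty$: Hölder's inequality applied to $\nabla(f*\varphi_\sigma)(z) = \int \nabla f(w)\,\varphi_\sigma(z-w)\,dw$ with exponents $q, p$ weighted by the Gaussian reference density gives
\[|\nabla(f*\varphi_\sigma)(z)| \le \|\nabla f\|_{L^q(\Gauss)} \left(\int \varphi_\sigma(w)^{1-p}\varphi_\sigma(z-w)^p\,dw\right)^{1/p},\]
and a direct Gaussian integral computation (completing the square in $w$) evaluates the right-hand factor as $e^{(p-1)|z|^2/(2\sigma^2)}$, yielding $L \le e^{(p-1)R^2/(2\sigma^2)}$.

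Putting the pieces together, McDiarmid gives $\Prob(\Gds(\hat\mu_n, \mu) \ge \E[\Gds(\hat\mu_n, \mu)] + s) \le \exp(-n s^2/(2R^2 L^2))$, which combined with the expectation bound and the comparison above yields the claim with $C := C_1 C_2$ and $c := (2 C_1^2 R^2 L^2)^{-1}$. The main obstacle is the uniform Lipschitz bound on $\cF * \varphi_\sigma$, but the Gaussian integral above dispatches this cleanly, and analogous uniform control of derivatives already appears in the proof of \cref{thm:limit-distribution}. I remark that applying McDiarmid directly to $\GWp$ is inadequate for $p > 1$: the triangle inequality and the contraction $\GWp \le \Wp$ yield bounded differences only of order $n^{-1/p}|X_i - X_i'|$, producing a concentration rate $\exp(-c t^2 n^{2/p - 1})$ that matches the claim only for $p = 1$, so the Sobolev IPM detour is essential.
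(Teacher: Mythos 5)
Your proposal is correct and follows essentially the same route as the paper: reduce to $\Gds$ via \cref{thm:smooth-Wp-Sobolev-comparison} (after centering), invoke the $n^{-1/2}$ bound on $\E\big[\Gds(\hat{\mu}_n,\mu)\big]$, and apply McDiarmid's bounded differences inequality to the IPM over the compact support. The only (cosmetic) difference is how the bounded-difference constant is controlled --- you use a uniform Lipschitz bound $e^{(p-1)R^2/(2\sigma^2)}$ on $\nabla(f*\varphi_\sigma)$ over the support, while the paper bounds the sup-norm of the centered functions $f*\varphi_\sigma$ via the Poincar\'{e} constant $D_q(\Gauss)$; both estimates come from the same computation in \cref{lem:derivative}.
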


For unbounded domains, concentration results mirroring those of Corollary 3 in \cite{goldfeld2020asymptotic} can be established in the same way under a stronger sub-Gaussianity assumption; we omit the details for brevity.

\begin{remark}[Constants]
While the rates provided in this section are dimension-free, the constants necessarily exhibit an exponential dependence on dimension. Indeed, minimax results for estimation of standard $\Wp$ due to \citet{singh2018}, combined with \cref{lem:stability},
imply that achieving dimension-free rates with constants scaling only polynomially in dimension is impossible in general.
We provide further details in the proofs of \cref{thm:slow-rate} and \cref{thm:limit-distribution}.
\end{remark}

\section{Smooth Sobolev IPM Efficient Computation}%
\label{sec:computation}
We next consider computation of $\Gds$, which takes a convenient form when $p=2$. 
Specifically, the Hilbertian structure of $\dot{H}^{1,2}(\Gauss)$ enables to streamline calculations significantly for all dimensions $d$.
In the following, fix $\sigma > 0$, $\mu,\nu \in \cP$, $X,X' \sim \mu \otimes \mu$, and $Y,Y' \sim \nu \otimes \nu$.

Consider the function space $\dot{H}_{0}^{1,2}(\Gauss) := \{f \in \dot{H}^{1,2}(\Gauss)$ $: \Gauss(f) = 0 \}$ with norm $\| \cdot \|_{\dot{H}^{1,2}(\Gauss)}$ (this norm is proper because of the constraint $\Gauss(f)=0$). 
This space becomes a Hilbert space when equipped with inner product $\langle f,g \rangle_{\dot{H}^{1,2}(\Gauss)} = \int_{\R^{d}} \langle \nabla f, \nabla g \rangle \, \dd \Gauss$.
Likewise, the space $\dot{H}_0^{1,2}(\Gauss) * \varphi_\sigma = \{ f*\varphi_\sigma : f \in \dot{H}_0^{1,2}(\Gauss) \}$ is a Hilbert space with inner product $\langle f*\varphi_\sigma,g*\varphi_\sigma \rangle_{\dot{H}^{1,2}(\Gauss) * \varphi_\sigma} = \langle f,g \rangle_{\dot{H}^{1,2}(\Gauss)}$ (see Appendix \ref{prfs:computation} for a proof that this is well-defined). In fact, we can say a bit more, first recalling some definitions.

\paragraph{Reproducing Kernel Hilbert Space (RKHS).}
Let $\cH$ be a Hilbert space of real-valued functions on $\R^d$.
We say that $\cH$ is an RKHS if  there is a positive semidefinite function $k:\R^d \times \R^d \to \R$, called a reproducing kernel,
such that $k(\cdot,x) \in \cH$ and $h(x) = \langle h, k(\cdot,x) \rangle$ for all $x \in \R^d$ and $h \in \cH$.
See \cite{steinwart2008, aronszajn_rkhs_1950} for comprehensive background on RKHSs.

\paragraph{Maximum Mean Discrepancy.}
Let $\cH$ be an RKHS with kernel $k$. The IPM corresponding to its unit ball, termed MMD, is given by
\begin{equation*}
    \mathsf{MMD}_\cH(\mu,\nu) := \sup_{f \in \cH :\, \|f\|_\cH \leq 1} \mu(f) - \nu(f).
\end{equation*}

\begin{proposition}[\citet{borgwardt2006}]
If $\E\bigl[\mspace{-4mu}\sqrt{k(X,X)}\bigr] \mspace{-1mu},$ $\E\bigl[\mspace{-4mu}\sqrt{k(Y,Y)}\bigr]<\infty$, then
\begin{equation}\label{eq:mmd}
    \mathsf{MMD}_\cH(\mu,\mspace{-2mu}\nu)^2\mspace{-3mu}=\mspace{-3mu}\E[k(X,\mspace{-2mu}X')]- 2\mspace{-2mu}\E[k(X,\mspace{-2mu}Y)]+ \E[k(Y,\mspace{-2mu}Y')]\mspace{-2mu}.
\end{equation}
\end{proposition}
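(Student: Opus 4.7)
The plan is to prove the identity by reducing MMD to a difference of \emph{kernel mean embeddings} in $\cH$ and then expanding the squared norm. First I would define, for $\mu \in \cP$ with $\E[\sqrt{k(X,X)}] < \infty$, the mean embedding $\mu_k := \E[k(\cdot,X)]$, interpreted as a Bochner integral in $\cH$. The integrability condition is exactly what is needed here: since $\|k(\cdot,x)\|_\cH = \sqrt{k(x,x)}$ by the reproducing property, we have $\E[\|k(\cdot,X)\|_\cH] = \E[\sqrt{k(X,X)}] < \infty$, so $\mu_k \in \cH$ is well-defined; the analogous statement defines $\nu_k$.

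Next I would apply the reproducing property to swap the order of integration and inner product: for every $f \in \cH$,
\[
\mu(f) = \E[f(X)] = \E[\langle f, k(\cdot,X)\rangle_\cH] = \langle f, \mu_k\rangle_\cH,
\]
where the interchange is justified by Bochner integrability (or equivalently Cauchy–Schwarz plus the integrability assumption). Consequently,
\[
\mathsf{MMD}_\cH(\mu,\nu) = \sup_{\|f\|_\cH \le 1} \langle f, \mu_k - \nu_k\rangle_\cH = \|\mu_k - \nu_k\|_\cH,
\]
where the last equality is the standard dual characterization of the norm on a Hilbert space (achieved by $f = (\mu_k - \nu_k)/\|\mu_k - \nu_k\|_\cH$, or trivially if $\mu_k = \nu_k$).

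Squaring and expanding gives
\[
\mathsf{MMD}_\cH(\mu,\nu)^2 = \langle \mu_k, \mu_k\rangle_\cH - 2\langle \mu_k, \nu_k\rangle_\cH + \langle \nu_k, \nu_k\rangle_\cH.
\]
To finish, I would evaluate each inner product using Fubini's theorem (valid by the finiteness of $\E[\sqrt{k(X,X)}]$ and $\E[\sqrt{k(Y,Y)}]$ combined with Cauchy--Schwarz on the kernel) together with the reproducing property, for instance
\[
\langle \mu_k, \nu_k\rangle_\cH = \E\bigl[\langle k(\cdot,X), k(\cdot,Y)\rangle_\cH\bigr] = \E[k(X,Y)],
\]
with $X \sim \mu$ independent of $Y \sim \nu$, and similarly $\langle \mu_k,\mu_k\rangle_\cH = \E[k(X,X')]$ and $\langle \nu_k,\nu_k\rangle_\cH = \E[k(Y,Y')]$. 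Substituting yields the claimed formula.

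The main obstacle, and really the only subtle point, is justifying that $\mu_k$ is a bona fide element of $\cH$ (not merely a functional) and that the inner products above may be written as expectations of the kernel. This is precisely where the hypothesis $\E[\sqrt{k(X,X)}], \E[\sqrt{k(Y,Y)}] < \infty$ is invoked, via Bochner integrability and Cauchy--Schwarz $|k(x,y)| \le \sqrt{k(x,x)k(y,y)}$; the rest of the argument is purely algebraic in a Hilbert space.
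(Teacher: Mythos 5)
Your proof is correct, and it is essentially the standard argument behind this result: the paper itself states the proposition without proof, citing \citet{borgwardt2006}, and your derivation—Bochner integrability of $x \mapsto k(\cdot,x)$ from $\E\bigl[\sqrt{k(X,X)}\bigr]<\infty$, the identity $\mathsf{MMD}_\cH(\mu,\nu) = \|\mu_k - \nu_k\|_\cH$ via the reproducing property, and expansion of the squared norm with Fubini justified by $|k(x,y)| \le \sqrt{k(x,x)k(y,y)}$—is precisely the kernel-mean-embedding proof given in that reference. No gaps; the only implicit points (strong measurability of the embedding and independence of $X$ and $Y$ for the cross term) are the usual standing assumptions.
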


We prove that $\dot{H}_0^{1,2}(\Gauss) * \varphi_\sigma$ is an RKHS whose kernel is expressed in terms of the entire exponential integral \cite{oldam_atlas_2009} $\Ein(z) := \int_{0}^{z}\left(1-e^{-t}\right) \frac{d t}{t}=\sum_{k=1}^{\infty} \frac{(-1)^{k+1} z^{k}}{k \cdot k !}$, giving an MMD form for $\mathsf{d}_2^{(\sigma)}$.

\begin{theorem}[$\mathsf{d}_2^{(\sigma)}$ as an MMD]
\label{thm:dual-Sobolev-RKHS}
The space $\dot{H}_0^{1,2}(\Gauss) * \varphi_\sigma$ is an RKHS with reproducing kernel
$\kappa^{(\sigma)}(x,y):= - \sigma^2 \Ein\left(-\langle x,y \rangle/\sigma^2\right)$.
Thus, if $\E\big[\sqrt{\kappa^{(\sigma)}(X,X)}\big], \E\big[\sqrt{\kappa^{(\sigma)}(Y,Y)}\big] < \infty$, then
\begin{equation}
\begin{split}
\label{eq:MMD}
\mathsf{d}_2^{(\sigma)}(\mu,\nu)^2 = \E\big[\kappa^{(\sigma)}(X,X')\big] &+ \E\big[\kappa^{(\sigma)}(Y,Y')\big]\\ &- 2\E\big[\kappa^{(\sigma)}(X,Y)\big].
\end{split}
\end{equation}
\end{theorem}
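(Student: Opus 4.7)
The plan is to produce an explicit preimage $g_x \in \dot{H}_0^{1,2}(\Gauss)$ of $\kappa^{(\sigma)}(\cdot,x)$ under convolution with $\varphi_\sigma$ via a Hermite-polynomial expansion, then check the reproducing property through an Ornstein--Uhlenbeck integration-by-parts identity.

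I first introduce the tensorized scaled probabilists' Hermite polynomials $\tilde{H}_\alpha(z) := \prod_{i=1}^d He_{\alpha_i}(z_i/\sigma)$, indexed by multi-indices $\alpha$. Standard Hermite orthogonality together with the recursion $He_n' = n He_{n-1}$ give mutual orthogonality of $\{\tilde{H}_\alpha\}$ in $\dot{H}^{1,2}(\Gauss)$ with $\|\tilde{H}_\alpha\|_{\dot{H}^{1,2}(\Gauss)}^2 = |\alpha|\,\alpha!/\sigma^2$, while the generating function $\sum_n (t^n/n!)He_n(y) = e^{ty-t^2/2}$ yields $(\tilde{H}_\alpha * \varphi_\sigma)(z) = (z/\sigma)^\alpha$. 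I then set $g_x := \sum_{|\alpha|\ge 1} \sigma^2(x/\sigma)^\alpha \tilde{H}_\alpha/(|\alpha|\,\alpha!)$. Summing squared norms and applying the multinomial theorem bounds $\|g_x\|_{\dot{H}^{1,2}(\Gauss)}^2 \le \sigma^2(e^{|x|^2/\sigma^2}-1)$, so $g_x$ is well-defined in $\dot{H}_0^{1,2}(\Gauss)$. Substituting into $g_x * \varphi_\sigma$, regrouping by $|\alpha|=k$, and using $\sum_{|\alpha|=k}(k!/\alpha!)(xz/\sigma^2)^\alpha = (\langle x,z\rangle/\sigma^2)^k$ collapses the double sum to $\sum_{k\ge 1}\langle x,z\rangle^k/(k\cdot k!\,\sigma^{2(k-1)})$, which is exactly the series expansion of $-\sigma^2\Ein(-\langle x,z\rangle/\sigma^2)$. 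Hence $\kappa^{(\sigma)}(\cdot,x) = g_x * \varphi_\sigma$ lies in $\dot{H}_0^{1,2}(\Gauss) * \varphi_\sigma$.

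To verify the reproducing property, I take $h$ of the form $f - \Gauss(f)$ with $f \in C_0^\infty$, which is dense in $\dot{H}_0^{1,2}(\Gauss)$. Since $\nabla h = \nabla f$ and $f$ is compactly supported, integration by parts gives $\langle h, g_x\rangle_{\dot{H}^{1,2}(\Gauss)} = \int \nabla f \cdot \nabla g_x\,\dd\Gauss = -\int f\,(Lg_x)\,\dd\Gauss$, where $L := \Delta - \langle z/\sigma^2,\nabla\cdot\rangle$ is the Ornstein--Uhlenbeck generator associated with $\Gauss$. Using $L\tilde{H}_\alpha = -(|\alpha|/\sigma^2)\tilde{H}_\alpha$, termwise differentiation combined with the Hermite generating function yields $Lg_x(z) = 1 - e^{-|x|^2/(2\sigma^2)}e^{\langle x,z\rangle/\sigma^2}$; since $\varphi_\sigma(x-z) = e^{-|x|^2/(2\sigma^2)}e^{\langle x,z\rangle/\sigma^2}\varphi_\sigma(z)$, this is equivalent to $-Lg_x(z)\varphi_\sigma(z) = \varphi_\sigma(x-z) - \varphi_\sigma(z)$. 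Integrating against $f$ and using $\int Lg_x\,\dd\Gauss = 0$ to discard the $\Gauss(f)$ contribution produces $\langle h, g_x\rangle_{\dot{H}^{1,2}(\Gauss)} = (h*\varphi_\sigma)(x)$. Density of the chosen subset, together with continuity of both sides (the right by Cauchy--Schwarz with $g_x$ fixed), then extends the identity to all of $\dot{H}_0^{1,2}(\Gauss)$, establishing the RKHS structure with reproducing kernel $\kappa^{(\sigma)}$.

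Formula (\ref{eq:MMD}) follows from the cited proposition applied to this kernel, after observing that the defining class of $\mathsf{d}_2^{(\sigma)}$ agrees with the unit ball of the RKHS: adding a constant to a test function affects neither $\|f\|_{\dot{H}^{1,2}(\Gauss)}$ nor $\mu(f)-\nu(f)$ between probability measures, so the mean-zero restriction is free. I expect the main technical obstacle to be justifying the termwise differentiation and sum-integral interchange in the Ornstein--Uhlenbeck step; the exponential bound $\|g_x\|^2 \le \sigma^2(e^{|x|^2/\sigma^2}-1)$ derived above should suffice, as it yields absolute convergence of the Hermite series in $\dot{H}^{1,2}(\Gauss)$ and locally uniform convergence of both $g_x$ and $Lg_x$ after smoothing.
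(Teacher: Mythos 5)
Your proposal is correct, and it reaches the kernel by a genuinely different route than the paper, although both rest on the same two Hermite facts (the $\dot{H}^{1,2}(\Gauss)$-norm identity for Hermite polynomials and the convolution identity sending $\He_n$ to monomials). The paper reduces to $\sigma=1$, shows the normalized Hermite polynomials form an orthonormal basis of $\dot{H}_0^{1,2}(\cN_1)$ (this needs density of polynomials in the Gaussian Sobolev space, cited from Schmuland), transfers it to an orthonormal basis of scaled monomials for $\dot{H}_0^{1,2}(\cN_1)*\varphi_1$, separately checks that point evaluations are bounded via its derivative lemma, and only then reads off the kernel as $\sum_k \phi_k(x)\phi_k(y)$. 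You instead exhibit the representer $g_x$ explicitly as a Hermite series, verify $g_x*\varphi_\sigma=\kappa^{(\sigma)}(\cdot,x)$, and check the reproducing identity on the dense class $\{f-\Gauss(f): f\in C_0^\infty\}$ by Ornstein--Uhlenbeck integration by parts; this bypasses the completeness-of-the-Hermite-system input and the separate boundedness-of-evaluation step (both now follow from the representer plus Cauchy--Schwarz), at the price of the termwise-differentiation and interchange justifications you flag. Those are routine: integrate by parts term by term, where it is trivial for each polynomial, and pass to the limit using convergence of the series defining $g_x$ in $\dot{H}^{1,2}(\Gauss)$ together with dominated convergence on the compact support of $f$; your closed-form computations of $\|\tilde{H}_\alpha\|$, $g_x*\varphi_\sigma$, and $Lg_x$ all check out. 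Two points to make explicit: in the density-extension step you also need continuity of $h\mapsto (h*\varphi_\sigma)(x)$ in the Sobolev norm, which follows from the $2$-Poincar\'{e} inequality and a Cauchy--Schwarz bound exactly as in the paper's derivative lemma; and, like the paper, you implicitly rely on injectivity of $f\mapsto f*\varphi_\sigma$ on $\dot{H}_0^{1,2}(\Gauss)$ so that the convolved space is a genuine Hilbert space of functions with the stated inner product --- the paper establishes this as a preliminary via Wiener's Tauberian theorem, and your argument does not replace that step. Your treatment of the final MMD identity (the mean-zero restriction being free) matches the paper's.
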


The proof begins with a reduction to $\sigma = 1$ and observes that properly normalized multivariate Hermite polynomials form an orthonormal basis for $\dot{H}_0^{1,2}(\cN_1)$.
Convolving these polynomials with $\varphi_1$, we obtain an orthonormal basis for $\dot{H}_0^{1,2}(\cN_1)*\varphi_1$ comprising scaled monomials, which can then be used to calculate the kernel.

The MMD formulation \eqref{eq:MMD} gives a convenient way to compute $\mathsf{d}_2^{(\sigma)}$ in practice. Suppose that we generate i.i.d. samples $X_1,\dots,X_m \sim \mu$ and $Y_1,\dots,Y_n \sim \nu$ with empirical distributions $\hat{\mu}_m = m^{-1} \sum_{i=1}^{m} \delta_{X_i}$ and $\hat{\nu}_n = n^{-1} \sum_{j=1}^{n} \delta_{Y_j}$. Then, we can  compute
\begin{multline*}
\mathsf{d}_2^{(\sigma)}(\hat{\mu}_m,\hat{\nu}_n)^2 = \frac{1}{m^2} \sum_{i=1}^m \sum_{j=1}^m \kappa^{(\sigma)}(x_i,x_j) + \\ \frac{1}{n^2} \sum_{i=1}^n \sum_{j=1}^n \kappa^{(\sigma)}(y_i,y_j) - \frac{2}{m n} \sum_{i=1}^m \sum_{j=1}^n \kappa^{(\sigma)}(x_i,y_j).
\end{multline*}
Provided that $\mu$ and $\nu$ are compactly supported or $\beta$-sub-Gaussian with $\beta < \sigma/\sqrt{2}$, \cref{cor:fast-rate} and the triangle inequality imply
\[
\E\left[\,\left|\mathsf{d}_2^{(\sigma)}(\hat{\mu}_m,\hat{\nu}_n) - \mathsf{d}_2^{(\sigma)}(\mu,\nu) \right|\,\right]= O\left( \min\{m,n\}^{-1/2} \right).
\]
Hence, we can approximate $\mathsf{d}_2^{(\sigma)}$ up to expected error $\eps$ with $O(\eps^{-2})$ samples from the measured distributions and $O(\eps^{-4})$ evaluations of $\kappa^{(\sigma)}$ for any dimension $d$.

\section{Statistical Applications}
\label{sec:applications}

With the empirical approximation and computational results in hand, we now present applications to two-sample testing and minimum distance estimation. These results highlight the benefits of smoothing, as several of the subsequent claims are unavailable for standard $\Wp$ due to the lack of parametric rates and limit distributions. 

\subsection{Two-Sample Testing}
\label{subsec:two-sample-testing}

We start from two-sample testing with $\GWp$ and $\Gds$, where $p > 1$ and $\sigma > 0$ are fixed throughout. Let $\mu,\nu \in \cP_p$ and take $X_1, \dots, X_m \sim \mu$ and $Y_1, \dots, Y_n \sim \nu$ to be mutually independent samples. %
The goal of nonparametric two-sample testing is to detect, based on the samples, whether the null hypothesis $H_0: \mu = \nu$ holds, without imposing parametric assumptions on the distributions.

A standard class of tests rejects $H_0$ if $D_{m,n} > c_{m,n}$, where $D_{m,n} = D_{m,n}(X_1, \dots, X_m, Y_1, \dots, Y_n)$ is a scalar test statistic and $c_{m,n}$ is a critical value chosen according to the desired level $\alpha \in (0,1)$. 
Precisely, we say that such a sequence of tests has asymptotic \emph{level} $\alpha$ if $\limsup_{m,n\to\infty} \Prob (D_{m,n} > c_{m,n}) \leq \alpha$ whenever $\mu = \nu$.
We say that these tests are asymptotically \emph{consistent} if $\lim_{m,n\to\infty} \Prob (D_{m,n} > c_{m,n}) = 1$ whenever $\mu \ne \nu$.
In what follows, we assume that $m,n \to \infty$ and $m/N \to \tau \in (0,1)$ with $N = m+n$.

The previous theorems will help us construct tests that enjoy asymptotic consistency and correct asymptotic level based on the smooth $p$-Wasserstein distance, using $W_{m,n}:= \sqrt{\frac{mn}{N}} \, \GWp(\hat{\mu}_m, \hat{\nu}_n)$.
Two-sample testing using the Wasserstein distance was previously explored in \cite{ramdas_two_sample_2017}, but these results are fundamentally restricted to the one-dimensional setting. Specifically, while the authors designed tests with data-independent critical values for $d=1$, they rely heavily on limit distributions of empirical Wasserstein distances that do not extend to higher dimensions. Our results use data-dependent critical values but scale to arbitrary dimension, demonstrating the compatibility of smooth distances for multivariate two-sample testing. %

We use the bootstrap to calibrate critical values. 
Consider the pooled data $(Z_1, \dots, Z_N) = (X_1, \dots, X_m, Y_1, \dots, Y_n)$ with empirical distribution $\hat{\gamma}_N = N^{-1}\sum_{i=1}^N \delta_{Z_i}$.
Let $X_1^B,\dots,X_m^B$ and $Y_1^B,\dots,Y_n^B$ be i.i.d. from $\hat{\gamma}_N$ given $Z_1, \dots, Z_N$, and take $\hat{\mu}_m^B = m^{-1}\sum_{i=1}^m \delta_{X_i^B}$ and $\hat{\nu}_n^B = n^{-1}\sum_{i=1}^n \delta_{Y_i^B}$ to be the corresponding bootstrap empirical measures. 

Specifying critical values requires a bit of care, as the comparison inequality \eqref{eq:duality} requires centering of one of measures. So we center the bootstrap empirical measures $\hat{\mu}_m^B$ and $\hat{\nu}_n^B$ by the pooled sample mean $\bar{Z} = N^{-1}\sum_{i=1}^{n}Z_i$, namely, we apply the bootstrap as
\[
W^{B}_{m,n} =p\, e^{\frac{\tr \hat{\Sigma}_Z}{2q \sigma^2}}  \sqrt{\frac{mn}{N}}  \Gds\left(\hat{\mu}_m^B*\delta_{-\bar{Z}_N}, \hat{\nu}_n^B*\delta_{-\bar{Z}_N}\right),
\]
where $\hat{\Sigma}_Z = N^{-1}\sum_{i=1}^{N}(Z_i-\bar{Z}_N)(Z_i-\bar{Z}_N)^{\top}$. Denote~the conditional $(1-\alpha)$-quantile of $W_{m,n}^{B}$ by $w_{m,n}^{B}(1-\alpha)$, i.e.,
\[
w_{m,n}^{B}(1-\alpha) = \inf \left \{ t : \Prob^{B}(W_{m,n}^{B} \le t) \ge 1-\alpha \right\}. 
\]
Then, we have the following result.

\begin{figure*}[t!]
\centering
\includegraphics[scale=0.65]{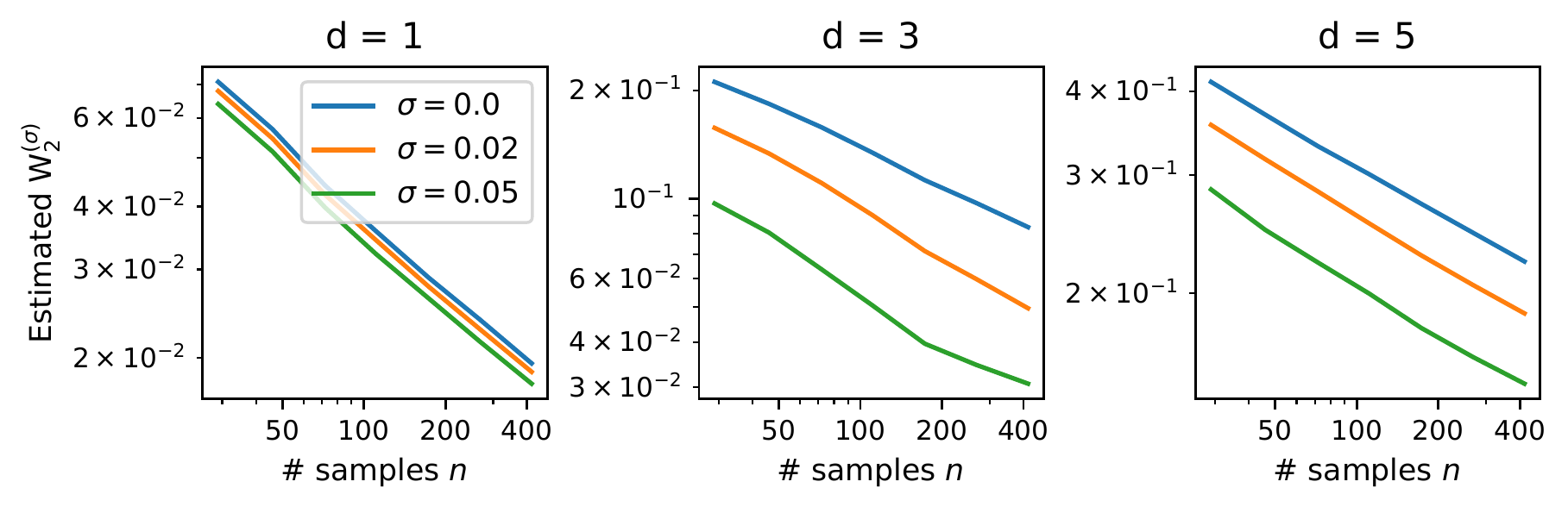}
\ \, \ \includegraphics[scale=0.641875]{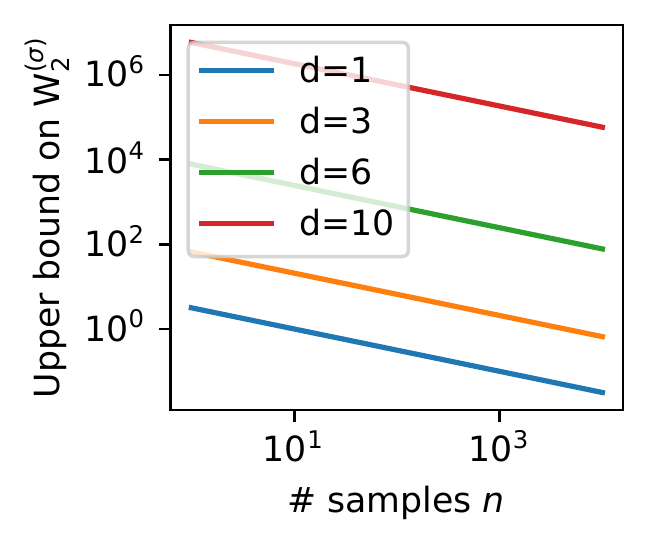}
\vspace{-4mm}
\caption{(Left) Empirical convergence of estimated $\E[\mathsf{W}^{(\sigma)}_2(\hat{\mu}_n,\mu)]$ to 0 for $\mu = \Unif([-1,1]^d)$ with $d \in \{1,3,5\}$. (Right) Loose upper bound on $\E[\mathsf{W}^{(0.5)}_2(\hat{\mu}_n,\mu)]$ provided by $\mathsf{d}_2^{(0.5)}$ for $\mu = \Unif([-1,1]^d)$ with $d \in \{1,3,6,10\}$.}\label{fig:empirical-convergence}
\end{figure*}

\begin{proposition}[Asymptotic validity]
\label{prop:two-sample_SW}
For $\mu,\nu \in \cP_p$ satisfying the condition of \cref{thm:limit-distribution}, the sequence of tests that reject the null hypothesis $H_0:\mu = \nu$ if $W_{m,n} > w^{B}_{m,n}(1-\alpha)$ is asymptotically consistent with  level $\alpha$.
\end{proposition}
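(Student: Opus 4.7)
The claim splits into two parts: asymptotic level $\alpha$ under $H_0:\mu=\nu$ and consistency under $H_1:\mu\ne\nu$. Both rely on three ingredients: (i) the comparison inequality of \cref{thm:smooth-Wp-Sobolev-comparison}, which bounds $\GWp$ by a moment-dependent constant times $\Gds$; (ii) an extension of \cref{thm:limit-distribution} to the two-sample setting; and (iii) standard bootstrap consistency for Donsker classes.

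First, I would establish a two-sample limit theorem. Writing $\gamma = \tau\mu + (1-\tau)\nu$ for the limit of the pooled distribution, the moment condition of \cref{thm:limit-distribution} is inherited by $\mu$, $\nu$, and $\gamma$, so the Donsker argument underlying that theorem shows $\cF*\varphi_\sigma$ is simultaneously $\mu$-, $\nu$-, and $\gamma$-Donsker. Independence of the two samples then yields that $\sqrt{mn/N}\bigl\{(\hat{\mu}_m-\hat{\nu}_n)-(\mu-\nu)\bigr\}$ converges weakly in $\ell^\infty(\cF*\varphi_\sigma)$ to a tight zero-mean Gaussian process $G$; under $H_0$ the continuous mapping theorem gives $\sqrt{mn/N}\,\Gds(\hat{\mu}_m,\hat{\nu}_n) \stackrel{d}{\to} \|G\|_{\infty,\cF}$.

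Next, I would invoke the empirical bootstrap central limit theorem for Donsker classes: since both $\hat{\mu}_m^B$ and $\hat{\nu}_n^B$ are drawn from the same pooled $\hat{\gamma}_N$, a conditional-on-data analogue of the previous step shows that $\sqrt{mn/N}\,(\hat{\mu}_m^B-\hat{\nu}_n^B)$ converges weakly in probability to a Gaussian process $G'$ in $\ell^\infty(\cF*\varphi_\sigma)$. Under $H_0$ we have $\gamma = \mu = \nu$ and the limits agree, $G' \stackrel{d}{=} G$. Combined with $\tr\hat{\Sigma}_Z \to \tr\Sigma_\gamma$ a.s.\ and translation invariance of $\Gds$, it follows that, conditionally on the data, $W_{m,n}^B \stackrel{d}{\to} p\,e^{\tr\Sigma_\gamma/(2q\sigma^2)}\,\|G'\|_{\infty,\cF}$, so $w_{m,n}^B(1-\alpha)$ converges in probability to the deterministic $(1-\alpha)$-quantile of that limit.

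For the level part, assume $\mu=\nu$ and (by translation) $\E[X]=0$; then $\bar{Z}_N \stackrel{a.s.}{\to} 0$ and $\tr\hat{\Sigma}_Z \stackrel{a.s.}{\to} \E[|X|^2]$. Translation invariance of $\GWp$ combined with \cref{thm:smooth-Wp-Sobolev-comparison} applied to $\hat{\mu}_m*\delta_{-\bar{Z}_N}$ (with the residual mean $\bar{X}_m-\bar{Z}_N = o_P(1)$ absorbed into a $(1+o_P(1))$ factor on the exponential constant) gives
\begin{equation*}
W_{m,n} \le (1+o_P(1))\,p\,e^{\tr\hat{\Sigma}_Z/(2q\sigma^2)}\,\sqrt{mn/N}\,\Gds\bigl(\hat{\mu}_m*\delta_{-\bar{Z}_N},\hat{\nu}_n*\delta_{-\bar{Z}_N}\bigr).
\end{equation*}
The right-hand side shares the limit distribution of $W_{m,n}^B$ by the previous paragraph, so Slutsky's lemma yields $\limsup \Prob(W_{m,n}>w_{m,n}^B(1-\alpha)) \le \alpha$. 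For consistency, under $H_1$ the triangle inequality and one-sample convergence (\cref{cor:fast-rate}) give $\GWp(\hat{\mu}_m,\hat{\nu}_n) \to \GWp(\mu,\nu) > 0$ a.s., so $W_{m,n} \to \infty$ in probability. The bootstrap critical value $w_{m,n}^B(1-\alpha) = O_P(1)$ regardless of whether $H_0$ holds, giving $\Prob(W_{m,n}>w_{m,n}^B(1-\alpha)) \to 1$. The main obstacle I expect is the perturbation in the level argument: \cref{thm:smooth-Wp-Sobolev-comparison} requires exact mean-zero measures, while empirical centering leaves an $o_P(1)$ residual. Handling this cleanly may need a quantitative ``near-zero mean'' variant of the inequality, most naturally obtained by revisiting the Benamou--Brenier interpolation in its proof.
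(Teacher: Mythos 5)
Your overall architecture matches the paper's: bound $W_{m,n}$ via \cref{thm:smooth-Wp-Sobolev-comparison}, prove a two-sample limit for $\sqrt{mn/N}\,\Gds$ from the Donsker property behind \cref{thm:limit-distribution}, establish bootstrap consistency for the same Donsker class, and get consistency under $H_1$ from $W_{m,n}\stackrel{\Prob}{\to}\infty$ plus tightness of the bootstrap critical values. However, there is a genuine gap in how you handle the random centering, and it appears twice. First, your bootstrap step leans on ``translation invariance of $\Gds$'' to dispose of the shift by $\bar{Z}_N$, but $\Gds$ is \emph{not} translation invariant: its function class $\{f*\varphi_\sigma : \|f\|_{\dot{H}^{1,q}(\Gauss)}\le 1\}$ is anchored to the Gaussian reference measure at the origin, so shifting both arguments by the same (random) vector changes the value. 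The paper instead enlarges the index set to the shifted class $\cF_{\mathsf{shift}}*\varphi_\sigma$ with $\cF_{\mathsf{shift}}=\{f(\cdot-a): \|f\|_{\dot{H}^{1,q}(\Gauss)}\le 1,\ |a|\le C\}$, shows this larger class is still Donsker, and uses asymptotic equicontinuity together with $\bar{X}_m,\bar{Z}_N\to a_\mu$ a.s.\ to replace the random shift by the population mean up to $o_{\Prob}(1)$ --- and this argument is needed both for the upper bound on $W_{m,n}$ and for the bootstrap process (where the unconditional bootstrap CLT, Theorem 3.6.1 of van der Vaart--Wellner, supplies the equicontinuity). Without some substitute for this step your conditional limit for $W^B_{m,n}$ and your distributional limit for the centered $\Gds$ statistic are unjustified.

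Second, the obstacle you flag at the end --- that \cref{thm:smooth-Wp-Sobolev-comparison} needs an exactly mean-zero measure while centering leaves an $o_{\Prob}(1)$ residual --- is an artifact of centering by the pooled mean $\bar{Z}_N$: if you instead center $W_{m,n}$ by the sample mean $\bar{X}_m$ (or $\bar{Y}_n$), translation invariance of $\GWp$ (which \emph{is} valid) gives $W_{m,n}=\sqrt{mn/N}\,\GWp(\hat{\mu}_m*\delta_{-\bar{X}_m},\hat{\nu}_n*\delta_{-\bar{X}_m})$ with the first argument exactly mean zero, so the comparison applies verbatim with constant $e^{\tr\hat{\Sigma}_X/(2q\sigma^2)}$; no ``near-zero mean'' variant of the inequality is needed. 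The difficulty you anticipated is thus avoidable, but the one you did not address (the non-invariance of $\Gds$ under the random shift) is exactly where the real work lies. A minor further point: passing from conditional convergence of $W^B_{m,n}$ to convergence of the quantiles $w^B_{m,n}(1-\alpha)$ and then to level $\alpha$ requires continuity of the limiting distribution function of $\|G\|_{\infty,\cF}$ (ruling out point masses), which the paper verifies before invoking the standard bootstrap test lemma.
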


We remark that the same argument gives a simpler result when $p=1$. Because $\mathsf{W}_1^{(\sigma)}$ is an IPM itself w.r.t.\ a function class that is $\mu$- and $\nu$-Donsker under moment conditions (see \citep[Theorem 1]{goldfeld2020asymptotic}), no centering is necessary, and $W_{m,n}^B$ can be replaced by $\sqrt{\frac{mn}{N}}\mathsf{W}_1^{(\sigma)}(\hat{\mu}^B_m,\hat{\nu}^B_n)$.

\subsection{Generative Modeling}

In the unsupervised learning task of generative modeling, we obtain an i.i.d.\ sample $X_1, \dots, X_n$ from a distribution $\mu \in \cP$ and aim to learn a generative model from a parameterized family $\{\nu_\theta\}_{\theta \in \Theta} \subset \cP$ which approximates $\mu$ under some statistical distance. We adopt the smooth Wasserstein distance as the figure of merit and use the empirical distribution $\hat{\mu}_n$ as an estimate for $\mu$. Generative modeling is thus formulated as the following minimum smooth Wasserstein estimation (M-SWE) problem:
\[
\inf_{\theta \in \Theta} \GWp(\hat{\mu}_n,\nu_\theta).
\]
In the unsmooth case, this objective with $\mathsf{W}_1$ inspired the Wasserstein GAN (W-GAN) framework that continues to underlie state-of-the-art methods in generative modeling \cite{arjovsky_wgan_2017,gulrajani2017improved}. M-SWE with $p=1$ was studied in \cite{goldfeld2020asymptotic}, and here we pursue similar measurability and consistency results for $p > 1$. 

In what follows, we take both $\mu \in \cP_p$ and $\{\nu_\theta\}_{\theta \in \Theta} \subset \cP_p$. Further, we suppose that $\Theta \subset \R^{d_0}$ is compact with nonempty interior and that $\theta \mapsto \nu_\theta$ is continuous w.r.t.\ the weak topology, i.e., $\nu_\theta \stackrel{w}{\to} \nu_{\bar{\theta}}$ whenever $\theta \to \bar{\theta}$. We start by establishing measurability, consistency, and parametric convergence rates for M-SWE.%

\begin{proposition}[M-SWE measurability]
\label{prop:M-SWE-measurability}
For each $n \in \N$, there exists a measurable function $\omega \mapsto \hat{\theta}_n(\omega)$ such that $\hat{\theta}_n(\omega) \in \argmin_{\theta \in \Theta} \GWp(\hat{\mu}_n(\omega),\nu_\theta)$.
\end{proposition}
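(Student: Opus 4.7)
The plan is to cast $\hat{\theta}_n$ as a measurable selector of the argmin correspondence $F(\omega) := \argmin_{\theta \in \Theta} \GWp(\hat{\mu}_n(\omega), \nu_\theta)$, and then invoke a standard measurable selection theorem. This reduces to two tasks: (a) showing $F(\omega)$ is non-empty and closed for every $\omega$, and (b) verifying enough joint regularity of the objective for $F$ to admit a measurable selector.

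For task (a), I would argue that $\theta \mapsto \GWp(\hat{\mu}_n(\omega), \nu_\theta)$ is lower semi-continuous on $\Theta$ for each fixed $\omega$. Indeed, if $\theta_k \to \theta_0$ in $\Theta$, the weak continuity hypothesis gives $\nu_{\theta_k} \stackrel{w}{\to} \nu_{\theta_0}$; convolution with the continuous bounded density $\varphi_\sigma$ preserves weak convergence, so $\nu_{\theta_k} * \Gauss \stackrel{w}{\to} \nu_{\theta_0} * \Gauss$. The standard lower semi-continuity of $\Wp$ under weak convergence (see \cite{villani2008optimal}) then yields lsc of the objective in $\theta$. Compactness of $\Theta$ guarantees attainment of the minimum, so $F(\omega)$ is non-empty and closed.

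For task (b), I would first verify measurability of the objective in $\omega$ for each fixed $\theta$. The map $(x_1, \ldots, x_n) \mapsto n^{-1} \sum_{i=1}^n \delta_{x_i}$ from $(\R^d)^n$ to $\cP_p$ is continuous in the $\Wp$ topology (convex combinations of Diracs converge in $\Wp$ whenever their atoms do, by the diagonal coupling), so composing with the measurable map $\omega \mapsto (X_1(\omega), \ldots, X_n(\omega))$ yields measurability of $\omega \mapsto \hat{\mu}_n(\omega)$. Combining this with the $1$-Lipschitz estimate $|\GWp(\mu, \nu_\theta) - \GWp(\mu', \nu_\theta)| \leq \Wp(\mu, \mu')$ (which follows from the triangle inequality of \cref{prop:smooth-wasserstein-metric} together with $\GWp \leq \Wp$) gives continuity, hence measurability, of the objective in $\omega$.

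Combining the two steps, $(\omega, \theta) \mapsto \GWp(\hat{\mu}_n(\omega), \nu_\theta)$ is a normal integrand on $\Omega \times \Theta$ (measurable in $\omega$, lsc in $\theta$), $F$ is closed-valued with non-empty values, and $\Theta$ is Polish, so Kuratowski-Ryll-Nardzewski (or, equivalently, the normal-integrand measurable selection theorem) produces a measurable $\hat{\theta}_n$. The main technical subtlety is the gap between weak continuity of $\theta \mapsto \nu_\theta$ and continuity in the $\Wp$/$\GWp$ topology on $\cP_p$: by \cref{prop:smooth-wasserstein-metric}, the latter topology combines weak convergence with convergence of $p$th moments, which need not follow from weak continuity alone. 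Fortunately, lsc suffices both for existence of a minimizer on compact $\Theta$ and for measurable selection, so this gap does not obstruct the proof; it would only become relevant for stronger claims (continuity of the minimizer in the data, or consistency), which would warrant an additional uniform $p$th-moment assumption on $\{\nu_\theta\}_{\theta \in \Theta}$.
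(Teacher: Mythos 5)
Your proposal is correct and follows essentially the same route as the proof the paper omits and attributes to \cite{goldfeld2020asymptotic}, \cite{bernton2019}, and \cite{pollard_min_dist_1980}: lower semicontinuity of $\theta \mapsto \GWp(\hat{\mu}_n(\omega),\nu_\theta)$ (via weak continuity of $\theta\mapsto\nu_\theta$, preservation of weak convergence under Gaussian convolution, and lsc of $\Wp$), compactness of $\Theta$ for attainment, measurability in $\omega$ through the $\Wp$-Lipschitz dependence on $\hat{\mu}_n$, and a measurable selection theorem for the resulting normal integrand.
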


\begin{proposition}[M-SWE consistency]
\label{prop:M-SWE_consistency}
The following hold:
\vspace{-3.5mm}
\begin{enumerate}[wide, labelindent=0pt]
    \item $\inf_{\theta \in \Theta} \GWp(\hat{\mu}_n, \nu_\theta) \to \inf_{\theta \in \Theta} \GWp(\mu,\nu_\theta)$ a.s.
    \vspace{-1.5mm}
    \item There exists an event with probability one on which the following holds: for any sequence $\{\hat{\theta}_n\}_{n \in \N}$ of measurable estimators such that $\GWp(\hat{\mu}_n,\nu_{\hat{\theta}_n}) \leq \inf_{\theta \in \Theta} \GWp(\hat{\mu}_n,\nu_\theta) + o_{\Prob}(1)$, the set of cluster points of $\{\hat{\theta}_n\}_{n \in \N}$ is included in $\argmin_{\theta \in \Theta} \GWp(\mu,\nu_\theta)$.
    \vspace{-1.5mm}
    \item If $\argmin_{\theta \in \Theta} \GWp(\mu,\nu_\theta) = \{\theta^\star\}$, then $\hat{\theta}_n \to \theta^\star$ a.s. 
\end{enumerate}
\end{proposition}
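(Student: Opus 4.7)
I would structure the proof as a standard M-estimator consistency argument. The key enabler is the triangle inequality $|\GWp(\hat{\mu}_n,\nu_\theta) - \GWp(\mu,\nu_\theta)| \leq \GWp(\hat{\mu}_n,\mu)$, which gives uniform-in-$\theta$ convergence of the sample objective for free, provided $\GWp(\hat{\mu}_n,\mu) \to 0$ almost surely. To establish the latter, I would invoke Varadarajan's theorem ($\hat{\mu}_n \stackrel{w}{\to} \mu$ a.s.) together with the SLLN (which gives $\int |x|^p \dd\hat{\mu}_n \to \int |x|^p \dd\mu$ a.s.\ since $\mu \in \cP_p$), and then conclude via \cref{prop:smooth-wasserstein-metric}, which identifies the topology induced by $\GWp$ with that of $\cP_p$. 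Taking infima over $\theta$ in the uniform bound immediately yields part (1).

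For parts (2) and (3), I need lower semicontinuity of the population objective $M(\theta) := \GWp(\mu,\nu_\theta)$. If $\theta_n \to \bar{\theta}$, the hypothesis gives $\nu_{\theta_n} \stackrel{w}{\to} \nu_{\bar{\theta}}$; convolving with $\Gauss$ is weak-to-weak continuous because integrating a bounded continuous function against $\varphi_\sigma$ produces a bounded continuous function, so $\nu_{\theta_n}*\Gauss \stackrel{w}{\to} \nu_{\bar{\theta}}*\Gauss$. The standard lower semicontinuity of $\Wp$ under weak convergence \cite{villani2008optimal} then yields $\liminf_n M(\theta_n) \geq M(\bar{\theta})$. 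Combined with compactness of $\Theta$, this ensures that $\argmin_\theta M$ is nonempty.

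On the almost-sure event where $\GWp(\hat{\mu}_n,\mu)\to 0$, the classical consistency argument applies. Fix a cluster point $\theta'$ of an approximate-minimizer sequence $\hat{\theta}_n$, reached along a subsequence $\hat{\theta}_{n_k} \to \theta'$ by compactness of $\Theta$; the $o_{\Prob}(1)$ slack is handled by a routine subsequence argument, passing to a further subsequence along which the slack is $o(1)$ a.s. Picking any $\theta^\star \in \argmin M$, chaining the uniform convergence with approximate minimality yields
\[
M(\hat{\theta}_{n_k}) \leq 2\GWp(\hat{\mu}_{n_k},\mu) + M(\theta^\star) + o(1),
\]
so $\limsup_k M(\hat{\theta}_{n_k}) \leq \inf M$. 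Lower semicontinuity from the previous step gives $M(\theta') \leq \liminf_k M(\hat{\theta}_{n_k}) \leq \inf M$, proving $\theta' \in \argmin M$ and hence (2). For (3), uniqueness of the minimizer forces every cluster point of $\hat{\theta}_n$ to be $\theta^\star$, and compactness of $\Theta$ upgrades this to full convergence $\hat{\theta}_n \to \theta^\star$ a.s.

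The main conceptual obstacle is the apparent mismatch between the hypothesis of weak continuity of $\theta \mapsto \nu_\theta$ and what one would naively ask for to get continuity of $M$, namely $\cP_p$-continuity (weak convergence plus convergence of $p$th moments). The key observation that makes the argument go through under the stated assumption is that mere lower semicontinuity of $M$ already suffices for M-estimator consistency — the $\limsup$ bound $\limsup_n M(\theta_n) \leq M(\bar{\theta})$, which would require moment control, is never invoked.
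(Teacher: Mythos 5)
Your proof is correct and follows essentially the route the paper intends: the paper omits the argument and defers to \cite{goldfeld2020asymptotic}, \cite{bernton2019}, and \cite{pollard_min_dist_1980}, whose proofs rest on exactly your ingredients---uniform convergence of the empirical objective via the triangle inequality together with $\GWp(\hat{\mu}_n,\mu)\le \Wp(\hat{\mu}_n,\mu)\to 0$ a.s.\ (Varadarajan plus SLLN), lower semicontinuity of $\theta\mapsto\GWp(\mu,\nu_\theta)$ obtained from weak continuity of $\theta\mapsto\nu_\theta$, weak-continuity of Gaussian convolution, and lower semicontinuity of $\Wp$, combined with compactness of $\Theta$. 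The only soft spot is the $o_{\Prob}(1)$ slack, where passing to a further a.s.-convergent subsequence introduces a null set depending on the estimator sequence (slightly weaker than the fixed probability-one event in the statement); this infelicity is inherent to the statement itself and is handled the same way in the cited references.
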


\begin{proposition}[M-SWE convergence rate]
If $\mu$ satisfies the conditions of \cref{thm:limit-distribution}, then \[\left|\inf_{\theta \in \Theta} \GWp(\hat{\mu}_n, \nu_\theta)-\inf_{\theta \in \Theta} \GWp(\mu,\nu_\theta)\right|=O_\PP(n^{-1/2}).\] 
\end{proposition}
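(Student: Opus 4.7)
The plan is to reduce the M-SWE convergence rate to the one-sample empirical convergence of $\GWp(\hat{\mu}_n,\mu)$, which is already established by \cref{cor:fast-rate}. The key observation is that the infimum of a metric over a fixed family is $1$-Lipschitz in its first argument with respect to that same metric. Since $\GWp$ is a genuine metric on $\cP_p$ by \cref{prop:smooth-wasserstein-metric}, the triangle inequality gives, for every $\theta \in \Theta$,
\[
\GWp(\hat{\mu}_n, \nu_\theta) \le \GWp(\hat{\mu}_n,\mu) + \GWp(\mu,\nu_\theta),
\]
and the symmetric inequality with $\mu$ and $\hat{\mu}_n$ swapped. Taking the infimum over $\theta$ on both sides of each bound yields
\[
\left|\inf_{\theta \in \Theta} \GWp(\hat{\mu}_n,\nu_\theta) - \inf_{\theta \in \Theta} \GWp(\mu,\nu_\theta) \right| \le \GWp(\hat{\mu}_n,\mu).
\]

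Next, I would invoke the hypothesis that $\mu$ satisfies the moment condition \eqref{eq:moment-condition} of \cref{thm:limit-distribution}. Under that condition, \cref{cor:fast-rate} guarantees that $\E\big[\GWp(\hat{\mu}_n,\mu)\big] = O(n^{-1/2})$. A single application of Markov's inequality then upgrades this to $\GWp(\hat{\mu}_n,\mu) = O_\PP(n^{-1/2})$: for any $\eps > 0$, choose $M$ with $M \cdot \limsup_n \sqrt{n}\,\E[\GWp(\hat{\mu}_n,\mu)] < \eps^{-1}\cdot \sup_n \sqrt{n}\,\E[\GWp(\hat{\mu}_n,\mu)]$ so that $\Prob(\sqrt{n}\,\GWp(\hat{\mu}_n,\mu) > M) < \eps$ uniformly in $n$. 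Combining with the deterministic bound above, the proposition follows.

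There is essentially no obstacle once \cref{thm:smooth-Wp-Sobolev-comparison} and \cref{thm:limit-distribution} are in hand: the reverse triangle inequality is standard, and the only subtlety is verifying that the metric axioms, in particular symmetry and the triangle inequality for $\GWp$, apply on $\cP_p$, which is exactly the content of \cref{prop:smooth-wasserstein-metric}. The sub-Gaussian-type tail condition inherited from \cref{thm:limit-distribution} is the only hypothesis on $\mu$ needed; no assumption on the family $\{\nu_\theta\}$ (beyond the standing continuity and compactness used for measurability and consistency) enters the rate bound, since the family appears identically on both sides and cancels via the triangle inequality.
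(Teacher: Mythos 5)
Your argument is correct and coincides with the proof the paper omits (and attributes to the $p=1$ analysis in prior work): the reverse triangle inequality gives $\left|\inf_{\theta}\GWp(\hat{\mu}_n,\nu_\theta)-\inf_{\theta}\GWp(\mu,\nu_\theta)\right|\le \GWp(\hat{\mu}_n,\mu)$, and \cref{cor:fast-rate} plus Markov's inequality yields the $O_{\PP}(n^{-1/2})$ rate, with the parametric family playing no role beyond cancellation. One minor slip: your displayed criterion for choosing $M$ is garbled (as written it selects a small $M$); the correct choice is simply $M > \epsilon^{-1}\sup_n \sqrt{n}\,\E\big[\GWp(\hat{\mu}_n,\mu)\big]$, which is finite by \cref{cor:fast-rate}, so that $\Prob\big(\sqrt{n}\,\GWp(\hat{\mu}_n,\mu) > M\big) < \epsilon$ for all $n$.
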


Likewise, under additional regularity conditions, the solutions $\hat{\theta}_n$ to M-SWE converge at the parametric rate, i.e., $|\hat{\theta}_n - \theta^\star| = O_{\PP}(n^{-1/2})$, where $\theta^\star$ is the unique solution as above; see \cref{prf:applications} for details.

These propositions follow by similar arguments to those in \cite{goldfeld2020asymptotic}, which build on \cite{pollard_min_dist_1980}, with arbitrary $p\geq 1$ instead of  $p=1$ as considered therein (the needed results from \cite{villani2008optimal} hold for all $p \geq 1$). We thus omit their proofs for brevity. 

We next examine a high probability generalization bound for generative modeling via M-SWE, in accordance to the framework from \cite{arora_gans_2017,zhang_generalization_2018}. Thus, we want to control the gap between the $\GWp$ loss attained by approximate, possibly suboptimal, empirical minimizers and the population loss $\inf_{\theta\in\Theta}\GWp (\mu,\nu_\theta)$. Upper bounding this gap by the rate of empirical convergence, the concentration result \cref{prop:concentration1} implies the following.

\begin{corollary}[M-SWE generalization error]
\label{cor:M-SWE-generalization-error}
Assume $\mu$ has compact support and let $\hat{\theta}_n$ be an estimator with $\GWp(\hat{\mu}_n,\nu_{\hat{\theta}_n}) \leq \inf_{\theta \in \Theta} \GWp(\hat{\mu}_n,\nu_\theta) + \epsilon$, for some $\epsilon>0$. We have %
\[
\Prob\mspace{-2mu}\left(\mspace{-2mu}\GWp(\mu,\nu_{\hat{\theta}_n}) \mspace{-1mu}- \mspace{-2mu}\inf_{\theta \in \Theta}\GWp(\mu,\nu_\theta) \mspace{-1mu}>\mspace{-1mu} \mspace{-1mu}\epsilon \mspace{-1mu}+ \mspace{-1mu}t \mspace{-1mu}\right) \mspace{-2mu}\leq \mspace{-1mu}C e^{-c nt^2},
\]
for constants $C,c $ independent of $n$ and $t$.
\end{corollary}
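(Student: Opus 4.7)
The plan is to reduce the generalization gap to the one-sample empirical quantity $\GWp(\hat{\mu}_n,\mu)$ via a triangle-inequality argument, and then invoke the concentration bound from Proposition \ref{prop:concentration1}. The overall structure is the classical ERM-style decomposition, adapted to $\GWp$ thanks to the metric property established in Proposition \ref{prop:smooth-wasserstein-metric}.

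First, I would fix $\eta>0$ and pick $\theta^{\star}_\eta \in \Theta$ with $\GWp(\mu,\nu_{\theta^{\star}_\eta}) \leq \inf_\theta \GWp(\mu,\nu_\theta) + \eta$. Using the triangle inequality twice together with the near-optimality of $\hat{\theta}_n$, I obtain
\begin{align*}
\GWp(\mu,\nu_{\hat{\theta}_n})
&\le \GWp(\mu,\hat{\mu}_n) + \GWp(\hat{\mu}_n,\nu_{\hat{\theta}_n}) \\
&\le \GWp(\hat{\mu}_n,\mu) + \inf_{\theta}\GWp(\hat{\mu}_n,\nu_\theta) + \epsilon \\
&\le \GWp(\hat{\mu}_n,\mu) + \GWp(\hat{\mu}_n,\nu_{\theta^{\star}_\eta}) + \epsilon \\
&\le 2\,\GWp(\hat{\mu}_n,\mu) + \inf_\theta \GWp(\mu,\nu_\theta) + \eta + \epsilon.
\end{align*}
Sending $\eta \downarrow 0$ yields the deterministic bound
\[
\GWp(\mu,\nu_{\hat{\theta}_n}) - \inf_{\theta\in\Theta}\GWp(\mu,\nu_\theta) \le 2\,\GWp(\hat{\mu}_n,\mu) + \epsilon.
\]

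Consequently, the event $\{\GWp(\mu,\nu_{\hat{\theta}_n}) - \inf_\theta \GWp(\mu,\nu_\theta) > \epsilon + t\}$ is contained in $\{\GWp(\hat{\mu}_n,\mu) > t/2\}$. Denoting by $C',c'$ the constants appearing in Proposition \ref{prop:concentration1}, the probability of the latter event is bounded by $\exp\!\big(-c'n(t/2 - C'n^{-1/2})^2\big)$ whenever $t/2 > C'n^{-1/2}$. For $t \ge 4C'n^{-1/2}$, we have $t/2 - C'n^{-1/2} \ge t/4$, so the bound simplifies to $\exp(-c'nt^2/16)$. For $t < 4C'n^{-1/2}$, the quantity $nt^2$ is bounded by a universal constant, and picking a sufficiently large prefactor $C$ makes $Ce^{-cnt^2} \ge 1$, so the inequality is trivial. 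Combining the two ranges delivers the stated bound for appropriate constants $C,c$ independent of $n$ and $t$.

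The argument is essentially routine once Proposition \ref{prop:concentration1} is in hand; the only minor subtlety is the final absorption step, where the additive $C'n^{-1/2}$ correction in the concentration bound must be folded into the multiplicative constants of the exponential tail. No new probabilistic ideas are required beyond the triangle inequality and the previously established one-sample concentration.
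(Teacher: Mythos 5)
Your proof is correct and follows essentially the same route as the paper: the identical triangle-inequality decomposition bounding the excess loss by $2\,\GWp(\hat{\mu}_n,\mu)+\epsilon$ (the paper packages it as \cref{lem:generalization-error}), followed by a one-sample concentration bound with the $Cn^{-1/2}$ shift absorbed into the constants. The only cosmetic difference is that you invoke \cref{prop:concentration1} as a black box, whereas the paper re-derives the same tail bound by chaining the comparison of \cref{thm:smooth-Wp-Sobolev-comparison}, the $\Gds$ concentration inequality, and the fast rate of \cref{cor:fast-rate}.
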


\section{Experiments}
\label{sec:experiments}

We present several numerical experiments supporting the theoretical results established in the previous sections. 
We focus on $p=2$ so that we can use the MMD form for $\mathsf{d}_2^{(\sigma)}$.
Code is provided at \url{https://github.com/sbnietert/smooth-Wp}.

First, we examine $\mathsf{W}_2^{(\sigma)}(\mu,\hat{\mu}_n)$ directly, with computations feasible for small sample sizes using the stochastic averaged gradient (SAG) method as proposed by \cite{genevay2016} and implemented by \cite{hallin2020}. 
In Figure \ref{fig:empirical-convergence} (left), we take $\mu = \Unif([-1,1]^d)$ %
and estimate $\E[\mathsf{W}_2^{(\sigma)}(\hat{\mu}_n,\mu)]$ averaged over 10 trials, for varied $d$ and $\sigma$. We observe the contractive property of $\mathsf{W}_2^{(\sigma)}$ and the speed up in convergence rate due to smoothing. However, SAG is not well-suited for computation in high dimensions and larger values of $\sigma$. Indeed, this method computes standard $\mathsf{W}_2$ between the convolved measures, which needs an exponential in $d$ number of samples from the Gaussian measure. %
Recently, \citet{vacher21} suggested that OT distances between smooth densities (like those of our convolved measures) may be computed more efficiently, but their algorithm is restricted to compactly supported distributions and leaves important hyperparameters unspecified.

Turning to $\mathsf{d}_2^{(\sigma)}$, the MMD form from \eqref{eq:MMD} readily enables efficient computation. In Figure \ref{fig:empirical-convergence} (right), we plot the $n^{-1/2}$ upper bound it gives for $\mathsf{W}_2^{(\sigma)}$ empirical convergence, using a closed form for relevant expectations described in \cref{app:rough-upper-bound}. %
We emphasize that this bound is too loose for practical approximation and serves rather as a theoretical tool for obtaining correct rates.
In Figure \ref{fig:Gds-limit-distributions}, we plot distributions of $\sqrt{n}\,\mathsf{d}_2^{(1)}(\hat{\mu}_n,\mu)$ as $n$ increases, using $\mu = \cN_s$ with varied $s$ and $d=5$. %
Distributions are computed using kernel density estimation over 50 trials and estimating $\mu$ by $\hat{\mu}_{1000}$.
We see convergence to a limit distribution for small $\sigma$ (estimating $\|G \|_{\infty,\cF}$ from \cref{thm:limit-distribution}) and note the necessity of the sub-Gaussian condition \eqref{eq:moment-condition}, with convergence failing as $\sigma$ surpasses $1/\sqrt{2}$, supporting \cref{lem:subGaussian}.

\begin{figure}[t!]
\vspace{3mm}
\includegraphics[width=7.7cm]{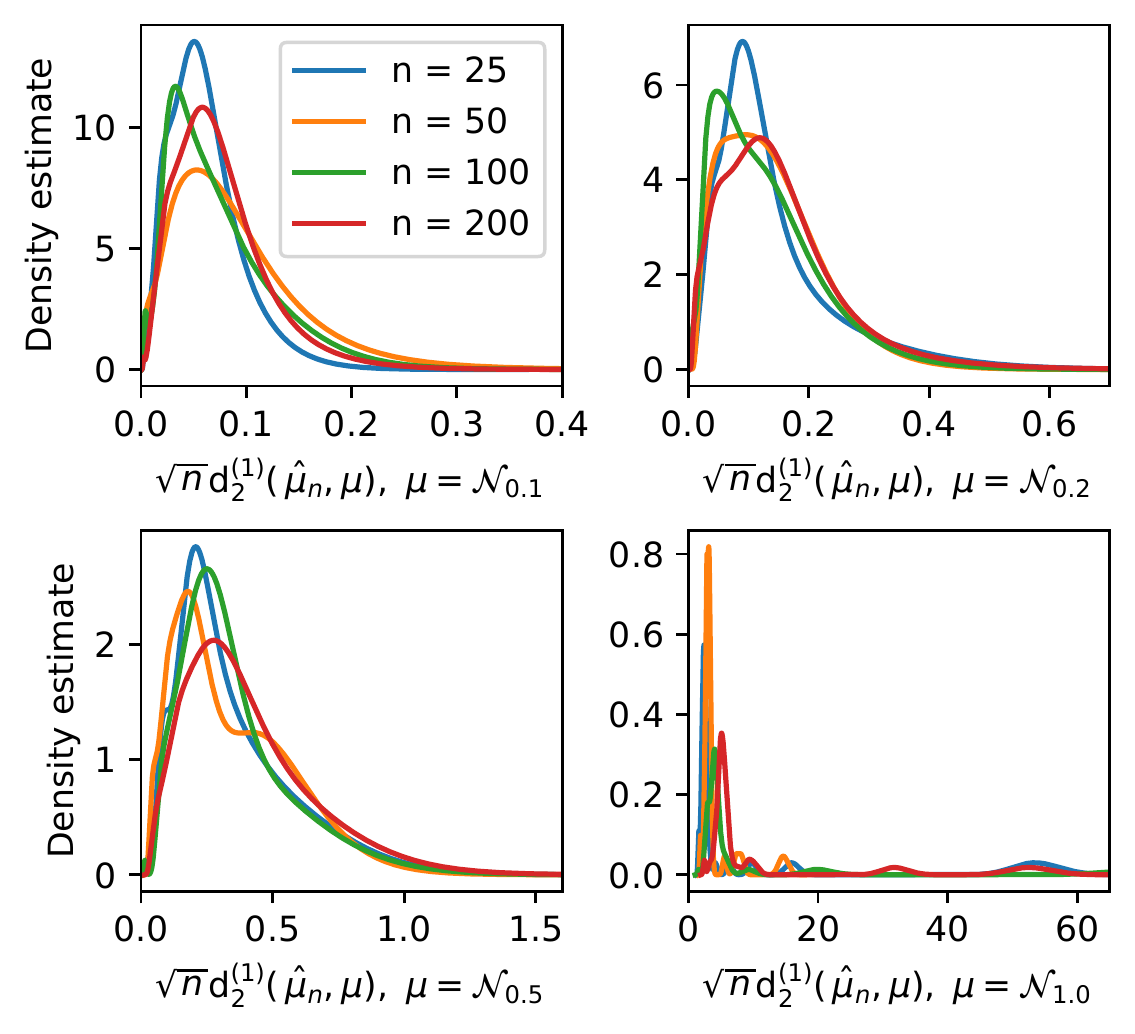}
\vspace{-1em}
\caption{Empirical limiting behavior of $\sqrt{n}\,\mathsf{d}_2^{(1)}(\hat{\mu}_n,\mu)$ for $\mu = \cN_s$ with $s \in \{0.1,0.2,0.5,1.0\}$ and $d = 5$.}
\label{fig:Gds-limit-distributions}
\end{figure}

\begin{figure}[t!]
\includegraphics[width=8cm]{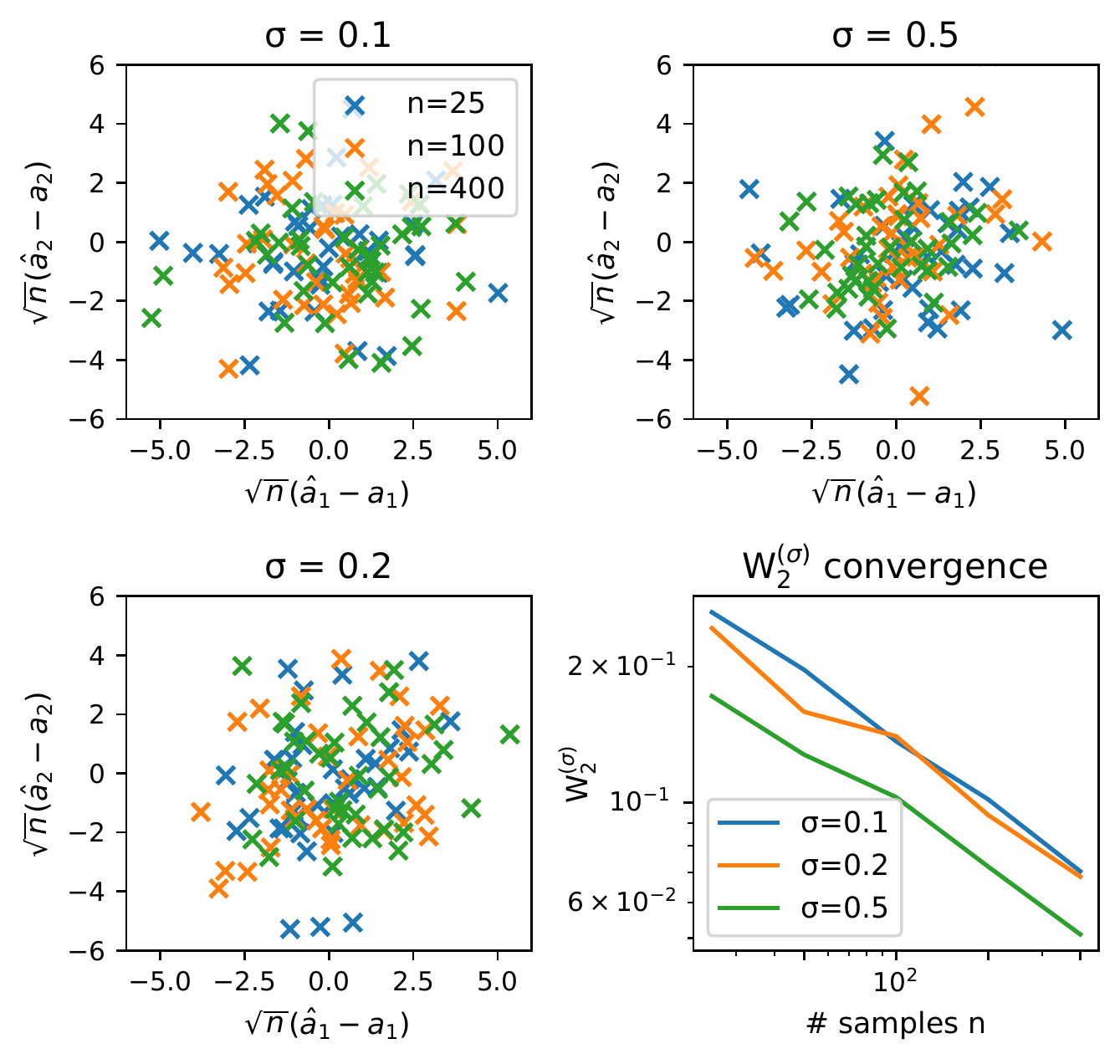}
\vspace{-1em}
\caption{One-dimensional limiting behavior of M-SWE estimates for the two mean parameters of $\mu = \cN(a_1,1)/2 + \cN(a_2,1)/2$ with $a_1 = -1$ and $a_2 = 1$. Also shown is a log-log plot of $\mathsf{W}_2^{(\sigma)}$ convergence in $n$.}
\label{fig:S-MWE}
\end{figure}

Next, we examine M-SWE when $d=1$, exploiting the fact that $\Wp$ can be expressed as an $L^p$ distance between quantile functions (see, e.g., \citep{villani2008optimal}).
The considered task is fitting a two-parameter generative model to a Gaussian mixture (parameterized by the means of its two modes). Distance minimization is implemented via gradient descent. Plotted in Figure \ref{fig:S-MWE} are $\sqrt{n}$-scaled scatter plots of the estimation errors, with 40 trials for each $\sigma$ and $n$ pair.  %
The consistent spread of the (scaled) estimation errors as $n$ increases demonstrates the $n^{-1/2}$ convergence rate. The bottom-right subplot shows $\mathsf{W}_2^{(\sigma)}$ estimation errors that further support the fast convergence. In \cref{app:experiments}, we provide additional results for a single Gaussian parameterized by mean and variance.

\begin{figure}[h]
\includegraphics[width=8cm]{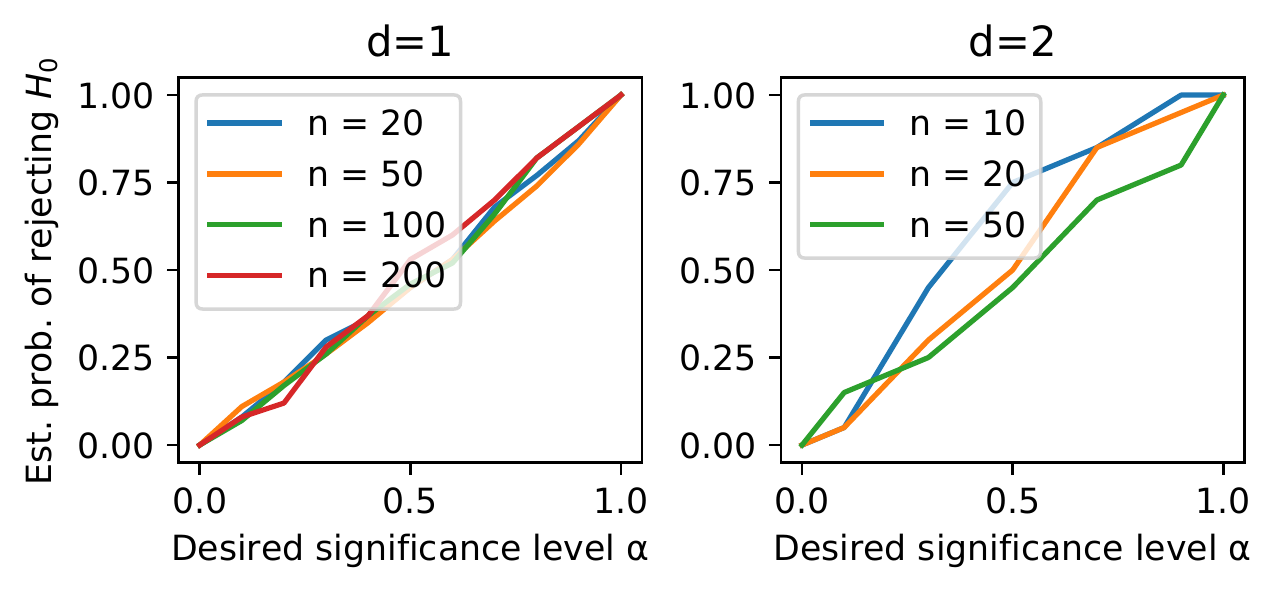}
\vspace{-1em}
\caption{Estimated probability that $\mathsf{W}_1^{(0.1)}$ two-sample test rejects null hypothesis $H_0:\mu = \nu$ given that $\mu = \nu = \Unif([0,1]^d)$.}
\label{fig:two-sample}
\end{figure}

Finally, we provide two-sample testing results in Figure \ref{fig:two-sample} for $p=1$, leveraging the simplifications discussed at the end of \cref{subsec:two-sample-testing}. We approximate the convolved empirical measures by adding Gaussian noise samples and compute $\mathsf{W}_1$ (exactly) for $d=1$ via its representation as the $L^1$ distance between quantile functions. For $d=2$, we estimate $\mathsf{W}_1$ using a standard implementation of W-GAN \cite{gulrajani2017improved, cao2017}. For varied sample sizes $n = m$, the quantiles of $\sqrt{\frac{n^2}{N}}\mathsf{W}_1^{(\sigma)}(\hat{\mu}^B_n,\hat{\nu}^B_n)$ are estimated using 1000 and 200 bootstrap samples for $d=1$ and $d=2$, respectively. The probability of rejecting the null hypothesis for varied significance levels and sample sizes is estimated by repeating the tests over 100 and 200 draws of the original samples, for $d=1$ and $d=2$ respectively. Figure \ref{fig:two-sample} displays the probability of false alarm versus the significance level $\alpha$. Evidently, the curves approximately fall along the diagonal $y=x$, supporting the consistency result.

\section{Conclusions and Future Directions}\label{sec:summary}

This work provided a thorough analysis of structural and statistical properties of the Gaussian-smoothed Wasserstein distance $\GWp$.
While $\GWp$ maintains many desirable properties of standard $\Wp$, we have shown via comparison to the smooth Sobolev IPM $\Gds$ that it admits a parametric empirical convergence rate, avoiding the curse of dimensionality that arises when estimating $\Wp$ from data.
Using this fast rate and the associated limit distribution for $\Gds$, we have explored new applications to two-sample testing and generative modeling.

An important direction for future research is efficient computation of $\GWp$. While standard methods for computing $\Wp$ are applicable in the smooth case (by sampling the noise), it is desirable to find computational techniques that make use of structure induced by the convolution with a known smooth kernel. Furthermore, while $\GWp$ exhibits an expected empirical convergence rate of $O(n^{-1/2})$ that is optimal in $n$, the prefactor scales exponentially with dimension and warrants additional study.
We suspect that this scaling can be shown under a manifold hypothesis to depend only on the intrinsic dimension of the data distribution rather than that of the ambient space.

Finally, we are interested in the limiting behavior of $\GWp$ as $\sigma \to 0$ and $p \to \infty$. The former case has implications for standard $\Wp$ and its dependence on intrinsic dimension, as well as for noise annealing that is common in machine learning practice. The latter may connect to differential privacy, where smoothing corresponds to (Gaussian) noise injection and $\mathsf{W}_\infty$ underlies the Wasserstein privacy mechanism \cite{song2017}.

\section*{Acknowledgements}
S. Nietert is supported by the National Science Foundation (NSF) Graduate Research Fellowship under Grant DGE-1650441. 
Z. Goldfeld is supported by the NSF CRII grant CCF-1947801, in part by the 2020 IBM Academic Award, and in part by the NSF CAREER Award CCF-2046018. 
K. Kato is partially supported by  NSF grants DMS-1952306 and DMS-2014636.

\bibliography{references}
\bibliographystyle{icml2021}

\clearpage

\appendix

\section*{Appendix}

\renewcommand{\thesection}{A.\arabic{section}}

\setcounter{section}{-1}
\section{Additional Notation}\label{sec:extra_notation}
For a given probability measure $\mu \in \cP$, let $\Phi_\mu(t) := \E\bigl[e^{i\langle t,X \rangle}\bigr]$ with $X \sim \mu$ denote its characteristic function.
Let $C^{k}(\R^{d})$ denote the class of $k$-times continuously differentiable functions on $\R^d$.
Let $\cL(X)$ denote the law of a random variable $X$.
We write $\lesssim$ for inequalities up to some numerical constant.

\section{Proofs for \cref{sec:smooth-Wp}}\label{sec:proofs3}
We first prove the following lemmas. 

\begin{lemma}[General smooth metrics]
\label{lem:general-smooth-metric}
Let $\kappa \in \cP$ be a distribution whose characteristic function never vanishes. %
If $\mathsf{d}$ is a metric on $\cX \subset \cP$ and $\cX$ is closed under taking convolutions with $\kappa$, then $\mathsf{d}_\kappa:(\mu,\nu) \mapsto \mathsf{d}(\mu*\kappa,\nu*\kappa)$ is also a metric on $\cX$.
\end{lemma}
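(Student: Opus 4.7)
The claim requires verifying the four metric axioms for $\mathsf{d}_\kappa$ on $\cX$. Non-negativity, symmetry, and the triangle inequality are immediate: for any $\mu,\nu,\lambda \in \cX$, closure of $\cX$ under convolution with $\kappa$ ensures $\mu*\kappa,\nu*\kappa,\lambda*\kappa \in \cX$, so these properties transfer directly from $\mathsf{d}$ applied to the convolved measures.

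The only substantive point is identity of indiscernibles, i.e., $\mathsf{d}_\kappa(\mu,\nu) = 0 \iff \mu = \nu$. The forward direction ($\mu = \nu \Rightarrow \mathsf{d}_\kappa(\mu,\nu)=0$) is trivial since $\mu * \kappa = \nu * \kappa$ and $\mathsf{d}$ is a metric. For the converse, $\mathsf{d}_\kappa(\mu,\nu) = 0$ implies $\mu * \kappa = \nu * \kappa$, and the goal is to conclude $\mu = \nu$. The plan is to pass to characteristic functions: by the convolution theorem, $\Phi_{\mu * \kappa}(t) = \Phi_\mu(t)\,\Phi_\kappa(t)$ and similarly for $\nu$, so equality of the convolved measures gives $\Phi_\mu(t)\,\Phi_\kappa(t) = \Phi_\nu(t)\,\Phi_\kappa(t)$ for all $t \in \R^d$. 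The hypothesis that $\Phi_\kappa$ never vanishes then lets us divide through to obtain $\Phi_\mu \equiv \Phi_\nu$, and uniqueness of characteristic functions yields $\mu = \nu$.

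There is no real obstacle here; the only care needed is invoking the non-vanishing hypothesis at exactly the right point (it is the sole ingredient making convolution with $\kappa$ injective on $\cP$). The Gaussian $\Gauss$ satisfies this since $\Phi_{\Gauss}(t) = e^{-\sigma^2|t|^2/2} > 0$, which is how this lemma will be applied to establish the metric property of $\GWp$ in \cref{prop:smooth-wasserstein-metric}.
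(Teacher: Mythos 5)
Your proof is correct and follows essentially the same route as the paper's: metric axioms other than identity of indiscernibles transfer directly, and the key step is that $\mu*\kappa = \nu*\kappa$ forces $\Phi_\mu \Phi_\kappa = \Phi_\nu \Phi_\kappa$, so the non-vanishing of $\Phi_\kappa$ and uniqueness of characteristic functions give $\mu = \nu$. No issues.
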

\begin{proof}
Non-negativity and symmetry follow from definition.
The triangle inequality is also straightforward, since for $\mu_1,\mu_2,\mu_3 \in \cX$, the triangle inequality for $\mathsf{d}$ gives
\begin{align*}
\mathsf{d}_\kappa(\mu_1,\mu_2) &= \mathsf{d}(\mu_1*\kappa,\mu_2*\kappa)\\
&\leq \mathsf{d}(\mu_1*\kappa,\mu_3*\kappa) + \mathsf{d}(\mu_3*\kappa,\mu_2*\kappa)\\
&= \mathsf{d}_\kappa(\mu_1,\mu_3) + \mathsf{d}_\kappa(\mu_3,\mu_2).
\end{align*}
Finally, if $\mathsf{d}_\kappa(\mu,\nu) = 0$, then $\mu*\kappa = \nu*\kappa$.
Recalling that the characteristic function of a convolution of measures factors into a product, i.e., $\Phi_{\mu_1*\mu_2} = \Phi_{\mu_1} \Phi_{\mu_2}$, and since the characteristic function of $\kappa$ never vanishes, we have $\mu = \nu$. 
\end{proof}

\begin{lemma}[Contractive property of convolution]
\label{lem:contraction}
For any probability measure $\kappa \in \cP$, $\Wp(\mu*\kappa,\nu*\kappa) \leq \Wp(\mu,\nu)$. In particular, $\GWp(\mu,\nu) \leq \Wp(\mu,\nu)$.
\end{lemma}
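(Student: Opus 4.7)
The plan is to produce a coupling of $\mu * \kappa$ and $\nu * \kappa$ whose transport cost matches $\Wp(\mu,\nu)^p$, and then invoke the definition of $\Wp$ as an infimum.

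First, I would let $\pi \in \Pi(\mu,\nu)$ be an optimal coupling for $\Wp(\mu,\nu)$, which exists since $\mu,\nu \in \cP_p$ (see, e.g., \cite{villani2008optimal}). Let $(X,Y) \sim \pi$ and let $Z \sim \kappa$ be independent of $(X,Y)$, all defined on a common probability space. Then the pair $(X+Z, Y+Z)$ has marginals $\mu * \kappa$ and $\nu * \kappa$ respectively, so its law $\tilde{\pi}$ is an element of $\Pi(\mu*\kappa, \nu*\kappa)$.

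Next, I would bound the transport cost of $\tilde{\pi}$. The key observation is the cancellation $(X+Z) - (Y+Z) = X - Y$, which immediately gives
\[
\int_{\R^d \times \R^d} |x - y|^p \, \dd \tilde{\pi}(x,y) = \E\bigl[|(X+Z) - (Y+Z)|^p\bigr] = \E\bigl[|X-Y|^p\bigr] = \Wp(\mu,\nu)^p.
\]
Taking the infimum of the left-hand side over $\Pi(\mu*\kappa,\nu*\kappa)$ and then taking $p$th roots yields $\Wp(\mu*\kappa, \nu*\kappa) \le \Wp(\mu,\nu)$. The ``in particular'' claim follows by specializing to $\kappa = \Gauss$ in the definition of $\GWp$.

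There is no real obstacle here; the only subtlety worth flagging is existence of the joint probability space carrying $(X,Y)$ and $Z$ independently, which is standard, and the fact that one must verify $\mu * \kappa, \nu * \kappa \in \cP_p$ so that $\Wp$ on the convolved pair is well-defined. This integrability is immediate from Minkowski's inequality applied to independent sums, so the argument proceeds without complication.
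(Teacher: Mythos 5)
Your proposal is correct and is essentially the paper's own argument: take an optimal coupling $(X,Y)$ for $\Wp(\mu,\nu)$, add an independent $Z \sim \kappa$ to both coordinates, and note the cancellation $(X+Z)-(Y+Z)=X-Y$ so that this coupling of $\mu*\kappa$ and $\nu*\kappa$ has cost exactly $\Wp(\mu,\nu)^p$. The extra care you take about the joint probability space and moments is fine but not needed beyond what the paper records.
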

\begin{proof}
Let $(X,Y)$ be an optimal coupling for $\Wp(\mu,\nu)$. Then taking $Z \sim \kappa$ independently, 
\begin{align*}
\Wp(\mu*\kappa,\nu*\kappa)^p &\leq \E\big[|(X+Z)-(Y+Z)|^p\big]\\
&=\E\big[|X-Y|^p\big] = \Wp(\mu,\nu).\qedhere
\end{align*}
\end{proof}

\begin{lemma}[Coupling decomposition]
\label{lem:smooth-Wp-coupling}
If $\pi \in \Pi(\mu*\Gauss,\nu*\Gauss)$, then there exists a coupling $(X,Y,Z,Z')$ such that $(X,Z) \sim \mu \otimes \Gauss$, $(Y,Z') \sim \nu \otimes \Gauss$, and $(X + Z,Y + Z') \sim \pi$.
\end{lemma}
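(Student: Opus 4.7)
\medskip

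\noindent The plan is to construct the desired coupling by disintegrating the marginals of $\pi$ along the convolution structure. Since $\mu*\Gauss$ admits a Lebesgue density $p_\mu(u) := \int \varphi_\sigma(u-x)\, \dd\mu(x)$ which is strictly positive everywhere, the joint law of $(X,U) = (X, X+Z)$ for $X \sim \mu$ and $Z \sim \Gauss$ independent is absolutely continuous in the second coordinate and has a well-defined regular conditional distribution $K(u,\cdot)$ given by
\[
K(u,A) = \frac{1}{p_\mu(u)}\int_A \varphi_\sigma(u-x)\, \dd\mu(x).
\]
By construction, if $U \sim \mu*\Gauss$ and $X \mid U=u \sim K(u,\cdot)$, then $(X,U-X) \sim \mu \otimes \Gauss$. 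Define $L(v,\cdot)$ analogously for $\nu$ in place of $\mu$.

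\medskip

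\noindent Given $\pi \in \Pi(\mu*\Gauss,\nu*\Gauss)$, I would build the joint law of $(X,Y,U,V)$ on $(\R^d)^4$ by the disintegration
\[
\dd\Prob(x,y,u,v) = K(u,\dd x)\, L(v,\dd y)\, \dd\pi(u,v),
\]
i.e., sample $(U,V)\sim \pi$ and then, conditionally on $(U,V)$, sample $X$ and $Y$ independently from $K(U,\cdot)$ and $L(V,\cdot)$ respectively. Setting $Z := U - X$ and $Z' := V - Y$ yields a coupling $(X,Y,Z,Z')$ of the required type, provided the three marginal checks go through.

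\medskip

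\noindent The verification proceeds marginal by marginal. For $(X,Z)$, integrating out $V$ using the $V$-marginal of $\pi$ reduces the joint to the distribution of $(X,U)$ where $U\sim \mu*\Gauss$ and $X\mid U\sim K(U,\cdot)$; by the defining property of $K$, the pushforward $(x,u) \mapsto (x,u-x)$ gives $\mu\otimes\Gauss$. The argument for $(Y,Z')\sim \nu\otimes\Gauss$ is symmetric. Finally, $(X+Z,Y+Z') = (U,V)\sim \pi$ holds by construction, since $(U,V)$ is drawn from $\pi$ before the conditional step.

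\medskip

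\noindent There is no real obstacle here: the existence of $K$ and $L$ is standard because $\R^d$ is Polish and the explicit formulas above give measurable versions, while everything else is bookkeeping via Fubini and the disintegration theorem. The only point that warrants a brief remark is that $p_\mu > 0$ everywhere (ensuring $K$ is defined pointwise, not merely $(\mu*\Gauss)$-a.e.), which follows from strict positivity of the Gaussian density $\varphi_\sigma$.
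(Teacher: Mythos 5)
Your construction is correct: the kernel $K(u,\cdot)$ is exactly the regular conditional law of $X$ given $X+Z=u$, the measure $K(u,\dd x)\,L(v,\dd y)\,\dd\pi(u,v)$ is well defined, and the three marginal checks go through precisely as you say (conditional independence of $X$ and $Y$ given $(U,V)$ is an admissible choice; any conditional joint law with those conditional marginals would work). The route differs from the paper's in execution: the paper does not write any kernels but instead invokes the gluing lemma three times, gluing $\pi$ with the law of $(X+Z,Z)$ along the common marginal $\mu*\Gauss$, then with the law of $(Y+Z',Z')$ along $\nu*\Gauss$, and finally gluing the two resulting trivariate laws along $\pi$ to obtain $(X+Z,Y+Z',Z,Z')$. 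Since the gluing lemma is itself proved by disintegration, your argument is essentially an explicit, self-contained instantiation of the same mechanism, and it buys concreteness (explicit Bayes-type kernels, only Fubini needed) at the cost of using that $\Gauss$ has a strictly positive Lebesgue density: your formulas require $\sigma>0$, whereas the paper's gluing argument works verbatim for an arbitrary smoothing measure, including the degenerate case $\sigma=0$ allowed elsewhere in Section 3 (that case is of course trivial, $Z=Z'=0$, and worth a one-line remark if you keep your version).
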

\begin{proof}
If suffices to find a coupling $(X+Z,Y+Z',Z,Z')$ with the correct marginals. First, note that we already have couplings $(X+Z,Y+Z')$, $(X+Z,Z)$ and $(Y+Z',Z')$, given by $\pi$, $(\mu*\Gauss)\otimes \Gauss$, and $(\nu*\Gauss) \otimes \Gauss$, respectively. Hence, we can apply the gluing lemma (see, e.g., \cite{villani2003})
between $\pi$ and $(\mu*\Gauss)\otimes \Gauss$ to obtain a coupling $(X+Z,Y+Z',Z)$ and then between $\pi$, $(\nu*\Gauss)\otimes \Gauss$ to obtain a coupling $(X+Z,Y+Z',Z')$. We apply the gluing lemma a final time between the outcomes of its previous applications to obtain a coupling $(X+Z,Y+Z',Z,Z')$.
\end{proof}

\subsection{Proof of \cref{prop:smooth-wasserstein-metric}}
\label{prf:smooth-wasserstein-metric}

\cref{lem:general-smooth-metric} verifies that $\Wp^{(\sigma)}$ is a metric on $\cP_{p}$, since $\Phi_{\Gauss}(t) = e^{-\sigma^2|t|^2/2}\neq 0$, for all $t\in\R^d$.
To show that $\Wp^{(\sigma)}$ induces the same topology as $\Wp$, it suffices to prove that 
\[
\Wp(\mu_n,\mu) \to 0 \iff \Wp^{(\sigma)}(\mu_n,\mu) \to 0.
\]
The ``$\Rightarrow$" direction follows by \cref{lem:contraction}.
For the other direction, suppose that $\Wp^{(\sigma)}(\mu_n,\mu)\to0$. By \cref{lem:smooth-Wp-coupling}, we can find a coupling $\big((X_n,Z_n),(X,Z)\big)$ with $(X_n,Z_n) \sim \mu_n \otimes \Gauss$ and $(X,Z) \sim \mu \otimes \Gauss$ such that $\Wp^{(\sigma)} (\mu_n,\mu)^{p} = \E[|X_n+Z_n - (X+Z)|^p]$.
We will show that $X_{n} \stackrel{d}{\to} X$ and $\E[|X_{n}|^{p}] \to \E[|X|^{p}]$, which yields the desired result.

To that end, it is sufficient (and necessary) to show that $X_{n} \stackrel{d}{\to} X$ and that $|X_{n}|^{p}$ is uniformly integrable. 
Since convergence in distribution is equivalent to pointwise convergence of characteristic functions, from $X_{n}+Z_{n} \stackrel{d}{\to} X+Z$, we have for all $t \in \R^d$ that
\begin{align*}
\lim_{n\to\infty}\Phi_{\mu_n}(t)e^{-\sigma^2|t|^2/2}&=\lim_{n \to \infty}\Phi_{\mu_n*\Gauss}(t)\\
&= \Phi_{\mu*\Gauss}(t)=\Phi_\mu(t)e^{-\sigma^2|t|^2/2}, 
\end{align*}
implying that $\lim_{n \to \infty}\Phi_{\mu_n}(t)=\Phi_\mu(t)$, for all $t \in \R^{d}$, and hence that $X_n \stackrel{d}{\to} X$.
To verify the uniform integrability, observe that $|X_{n}|^{p} \le 2^{p-1}(|X_{n}+Z_{n}|^{p} + |Z_{n}|^{p})$.
By construction, $|X_{n}+Z_{n}|^{p}$ is uniformly integrable, while $|Z_{n}|^{p} \stackrel{d}{=} |Z|^{p}$ is trivially uniformly integrable, implying the uniform integrability of their sum and hence $|X_{n}|^{p}$.
\qed

\subsection{Proof of \cref{lem:stability}}
\label{prf:stability}
By \cref{lem:contraction}, we have $\mathsf{W}_p^{(\sigma_2)}(\mu,\nu) \leq \mathsf{W}_p^{(\sigma_1)}(\mu,\nu)$. For the other direction, let $X \sim \mu$, $Y \sim \nu$,  $Z_X \sim \cN_{\sigma_1}$, $Z_Y \sim \cN_{\sigma_1}$, $Z_X' \sim \cN_{\sqrt{\sigma_2^2 - \sigma_1^2}}$, and $Z_Y' \sim \cN_{\sqrt{\sigma_2^2 - \sigma_1^2}}$.
The smooth $p$-Wasserstein distance of parameter $\sigma_2$ is given as a minimization over couplings of the aforementioned random variables subject to the mutual independence of $(X, Z_X, Z_X')$ along with that of $(Y, Z_Y, Z_Y')$.
With this convention, we have
\begin{multline*}
\Wp^{(\sigma_2)}(\mu,\nu)\\ = \inf \Big (\E\Big[ \big|\big( (X + Z_X) - (Y+Z_{Y}) \big) + (Z_X'- Z_Y')\big|^p\Big] \Big)^{1/p}.
\end{multline*}
Now, Minkoski's inequality gives
\begin{align*}
\Wp^{(\sigma_2)}(\mu,\nu)
&\ge \inf \Bigl[ \Big(\E \Big[\big|(X + Z_X) - (Y + Z_Y)\big|^p\Big]\Big)^{1/p}\\
&\quad\qquad - \Big(\E\Big[\big|Z_X' - Z_Y'\big|^p\Big]\Big)^{1/p} \,\Bigr] \\
&\ge \Wp^{(\sigma_1)}(\mu,\nu) - \sup \big(\E \big[|Z_X' - Z_Y'|^p\big]\big)^{1/p} \\
&\ge \Wp^{(\sigma_1)}(\mu,\nu) - 2  \big(\E \big[|Z_X'|^p\big]\big)^{1/p}.
\end{align*}
Recall that for $Z \sim \cN (0,\mathrm{I}_{d})$, 
\[
\E\big[|Z|^{p}\big] = \frac{2^{p/2} \Gamma((p+d)/2)}{\Gamma(d/2)}.
\]
If $p$ is even, then above term is bounded by $(d + 2p - 2)^{p/2}$. In general, we round $p$ up to the nearest even integer to obtain the bound $(d + 2p + 2)^{p/2}$, completing the proof.
\qed

\subsection{Proof of \cref{prop:smooth-Wp-dependence-on-sigma}}
\label{prf:smooth-Wp-dependence-on-sigma}
The proof follows that of Theorem 3 in \cite{Goldfeld2020GOT}.
For Claim (ii), we simply apply %
Lemma 1, taking $\sigma_1 = 0$ and $\sigma_2 \to 0$.
For Claim (i), monotonicity follows directly from the contractive property established in the previous proof. For left continuity of $\GWp$, we apply %
Lemma 1 with $\sigma_2 = \sigma$ and $\sigma_1 \nearrow \sigma$. For right continuity, take $\sigma_k \searrow \sigma$ and define $\eps_k = \sqrt{\sigma_k^2 - \sigma^2}$. Then,
\begin{align*}
\mathsf{W}_p^{(\sigma_k)}(\mu,\nu) = \mathsf{W}_p^{(\eps_k)}(\mu*\Gauss,\nu*\Gauss) \to \GWp(\mu,\nu) 
\end{align*}
as $k \to \infty$. Claim (iii) follows from Corollary 2.4 of \cite{chen2020}.
\qed

\subsection{Proof of \cref{prop:convergence-of-plans}}
\label{prf:convergence-of-plans}
A close inspection of the proof of Theorem 4 in \cite{Goldfeld2020GOT}, which covers the $p=1$ case up to extraction of a subsequence, reveals that the only required properties of $|\cdot|^1$ are its non-negativity and continuity. These also hold for $|\cdot|^p$, so the theorem applies to $\GWp$. Further, the proof implies that any weakly convergent subsequence of couplings converges to an optimal coupling for $\GWp(\mu,\nu)$. When $p > 1$ and $\sigma > 0$, optimal couplings are unique (see, e.g., Theorem 2.44 of \cite{villani2003}), so Prokhorov's Theorem implies that extraction of a subsequence is not necessary.\qed

\subsection{Proof of \cref{thm:smooth-Wp-Sobolev-comparison}}
\label{prf:smooth-Wp-Sobolev-comparison}

We begin with a useful result bounding unsmoothed $\Wp$ by a dual Sobolev norm, adapting a proof from \cite{dolbeault2009}.

\begin{lemma}
\label{lem:comp-ub}
Fix $p > 1$ and suppose that $\mu_0, \mu_1 \in \cP_p$ with $\mu_0,\mu_1 \ll \gamma$ for some locally finite Borel measure $\gamma$ on $\R^d$. Denote their respective densities by $f_i = \dd \mu_i/\dd \gamma$. 
If $f_0$ or $f_1$ is lower bounded by some $c>0$, then we have
\[
\Wp(\mu_0,\mu_1) \leq p\, c^{-1/q} \| \mu_0 - \mu_1 \|_{\dot{H}^{-1,p}(\gamma)}.
\]
\end{lemma}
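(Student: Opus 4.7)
}
The plan is to follow the Benamou--Brenier dynamic viewpoint: upper bound $\Wp(\mu_0,\mu_1)$ by the action of an explicit curve in $\cP_p$ connecting $\mu_0$ to $\mu_1$, and then relate that action to $\|\mu_1-\mu_0\|_{\dot{H}^{-1,p}(\gamma)}$ through a duality extraction. By symmetry of $\Wp$ we may assume without loss of generality that $f_0\ge c$.

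First, I would extract a vector field from the dual Sobolev norm via Hahn--Banach/Riesz duality. The linear functional $g\mapsto \int_{\R^d} g\,(f_1-f_0)\,\dd\gamma$ on $C_0^\infty$ is continuous with respect to the seminorm $g\mapsto \|\nabla g\|_{L^q(\gamma;\R^d)}$, with operator norm equal to $\|\mu_1-\mu_0\|_{\dot{H}^{-1,p}(\gamma)}$. Extending by density to the closure of $\{\nabla g : g\in C_0^\infty\}$ inside $L^q(\gamma;\R^d)$ and applying Riesz representation, I obtain a field $u\in L^p(\gamma;\R^d)$ with $\|u\|_{L^p(\gamma;\R^d)} = \|\mu_1-\mu_0\|_{\dot{H}^{-1,p}(\gamma)}$ such that
\begin{equation*}
\int_{\R^d} g\,(f_1-f_0)\,\dd\gamma \;=\; \int_{\R^d}\langle\nabla g, u\rangle\,\dd\gamma \quad\text{for all }g\in C_0^\infty,
\end{equation*}
i.e., $-\nabla\!\cdot(u\gamma) = \mu_1-\mu_0$ in the distributional sense.

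Next, I would take the linear interpolation $\mu_t := f_t\,\gamma$ with $f_t := (1-t)f_0 + t f_1$ for $t\in[0,1]$, and define the velocity field $v_t := u/f_t$. A direct verification using the displayed identity above shows that the pair $(\mu_t,v_t)$ solves the continuity equation $\partial_t\mu_t + \nabla\!\cdot(v_t\mu_t)=0$ weakly on $[0,1]\times\R^d$. The standard action bound for absolutely continuous curves in $(\cP_p,\Wp)$ (see, e.g., Theorem 8.3.1 of Ambrosio--Gigli--Savaré) then yields
\begin{equation*}
\Wp(\mu_0,\mu_1) \;\le\; \int_0^1 \|v_t\|_{L^p(\mu_t;\R^d)}\,\dd t \;=\; \int_0^1 \left(\int_{\R^d}|u|^p f_t^{\,1-p}\,\dd\gamma\right)^{1/p}\dd t.
\end{equation*}

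Finally, using the assumption $f_0\ge c$ I get $f_t \ge (1-t)c$, so $f_t^{1-p}\le ((1-t)c)^{1-p}$ since $1-p<0$. Pulling this out of the integral and noting $(p-1)/p = 1/q$ gives
\begin{equation*}
\Wp(\mu_0,\mu_1) \;\le\; c^{-1/q}\,\|u\|_{L^p(\gamma;\R^d)} \int_0^1 (1-t)^{-1/q}\,\dd t \;=\; p\,c^{-1/q}\,\|\mu_1-\mu_0\|_{\dot{H}^{-1,p}(\gamma)},
\end{equation*}
since $\int_0^1 (1-t)^{-1/q}\,\dd t = q/(q-1) = p$.

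The main obstacle I anticipate is the regularity bookkeeping in the second step: justifying that the pair $(\mu_t, v_t)$ defined through the Riesz-representation field $u$ is genuinely admissible in the continuity-equation / Benamou--Brenier bound, given that $u$ is only an $L^p(\gamma)$ object and the densities $f_0,f_1$ need not be smooth. A clean way around this is to first approximate $u$, $f_0$, and $f_1$ by mollified versions and pass to the limit using lower semicontinuity of $\Wp$ and of the $L^p$-action, exactly as in \cite{dolbeault2009}.
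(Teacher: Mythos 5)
Correct, and essentially the paper's own approach: the paper also runs the dynamic Benamou--Brenier argument of \cite{dolbeault2009} along the linear interpolation of the densities, with a representing field $w$ satisfying $\|w\|_{L^p(\gamma;\R^d)}=\|\mu_0-\mu_1\|_{\dot{H}^{-1,p}(\gamma)}$, and then observes, exactly as you do, that a lower bound on just one density forces $f_t\ge (1-t)c$ along the path and yields the constant $p\,c^{-1/q}$. If anything, your bookkeeping---keeping the $1/p$-th power inside the time integral so that the conclusion comes from $\int_0^1 ((1-t)c)^{-1/q}\,\dd t = p\,c^{-1/q}$---is the cleaner way to finish, since the paper's displayed chain with the $p$-th power outside produces $\int_0^1 t^{1-p}\,\dd t$, which diverges for $p\ge 2$ and only works as intended once the exponent is read as $-1/q$.
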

\begin{proof}
We essentially apply Theorem 5.26 of \cite{dolbeault2009}, which (for the choice of $\phi(\rho,w) = \rho^{1-p}|w|^p$), bounds $\Wp$ from above by the relevant dual Sobolev norm times a constant which depends on a lower bound for \emph{both} $f_0$ and $f_1$. The proof exploits the dynamic Benamou-Brenier formulation of optimal transport and the path in $(\cP_p,\Wp)$ which interpolates linearly between densities. Before concluding, they show
\[
\Wp(\mu_0,\mu_1)^p \leq \int_0^1 \int_{\R^d} ((1-t)f_0 + tf_1)^{1-p} |w|^p \, \dd \gamma\, \dd t,
\]
where $\|w\|_{L_p(\gamma;\R^d)} = \|\mu_0 - \mu_1\|_{\dot{H}^{-1,p}(\gamma)}$ (such $w$ is shown to exist only assuming $\|\mu_0 - \mu_1\|_{\dot{H}^{-1,p}(\gamma)} < \infty$).
However, even with the lower bound $c$ on just one of the densities (say $f_0$ without loss of generality), we have
\begin{align*}
&\int_0^1 \int_{\R^d} ((1-t)f_0 + tf_1)^{1-p} |w|^p \, \dd \gamma\, \dd t\\ \leq &\int_0^1 (tc)^{1-p} \int_{\R^d}  |w|^p \, \dd \gamma\, \dd t\\
&= c^{1-p} \|w\|_{L_p(\gamma;\R^d)}^p \int_0^1 t^{1-p}\, \dd t\\
&= p^p c^{1-p} \|\mu_0 - \mu_1\|_{\dot{H}^{-1,p}(\gamma)}^p,
\end{align*}
which gives the lemma.
\end{proof}

To prove the theorem, we apply the lemma with $\mu_0 = \mu * \Gauss$, $\mu_1 = \nu * \Gauss$, and $\gamma = \Gauss$. To bound $\dd \mu * \Gauss / \dd \Gauss$ from below, let $X \sim \mu$ and compute
\[
\begin{split}
\mu*\varphi_{\sigma}(y) &= \frac{1}{(2\pi\sigma^2)^{d/2}} \int_{\R^{d}} e^{-|x-y|^2/(2\sigma^2)} \dd\mu(x) \\
&\ge \frac{1}{(2\pi \sigma^2)^{d/2}} e^{-\E[|y-X|^2/(2\sigma^2)]},
\end{split}
\]
where the second step uses Jensen's inequality. The desired conclusion follows because $\E[|y-X|^2] = |y|^2 + \E[|X|^2] - 2\langle y, \E[X] \rangle$ and $X$ has mean zero.

For a related lower bound, we will apply Theorem 5.24 of \cite{dolbeault2009} with the choice of $\phi(\rho,w) = |w|^p$ to see that $\Wp(\mu_0,\mu_1) \geq C^{-1} \| \mu_0 - \mu_1 \|_{\dot{H}^{-1,p(\gamma)}}$ under the same conditions as \cref{lem:comp-ub} but where $C$ is now an upper bound on the densities. To start, we compute
\begin{equation*}
\begin{split}
\frac{\mu*\varphi_{\sigma}(y)}{\varphi_{\sqrt{2}\sigma}(y)} &= 2^{d/2} \int_{\R^{d}} e^{-\frac{|y-x|^2}{2\sigma^2} + \frac{|y|^2}{4\sigma^2}} \dd\mu(x) \\
&= 2^{d/2} \int_{\R^{d}} e^{-\frac{|y-2x|^2}{4\sigma^2} + \frac{|x|^2}{2\sigma^2}} \dd\mu(x)\\
&\leq 2^{d/2} \E\big[e^{|X|^2/(2\sigma^2)}\big],
\end{split}
\end{equation*}
where $X \sim \mu$. Hence,
\begin{align*}
    \GWp(\mu_0,\mu_1) &\geq 2^{-d/2}\\ &\left(\E\Big[e^{|X_0|^2/(2\sigma^2)}\Big] \land \E\Big[e^{|X_1|^2/(2\sigma^2)}\Big] \right)^{-1}\\
    &\:\big\|(\mu_0 - \mu_1)*\cN_{\sigma}\big\|_{\dot{H}^{-1,p}(\cN_{\sqrt{2}\sigma})},
\end{align*}
where $X_0 \sim \mu_0$ and $X_1 \sim \mu_1$. This bound is only meaningful when $\mu_0$ and $\mu_1$ are sufficiently sub-Gaussian.
\qed

\subsection{Proof of \cref{prop:Gds-sigma-dependence}}
For (i), we observe that if $\mu \neq \nu$, then the two measures must share a continuity set $A$ such that $\mu(A) \neq \nu(A)$. We can assume without loss of generality that $A$ does not contain the origin and that $(\mu - \nu)(A) > 0$. Then, for any $C > 0$, there exists sufficiently small $\sigma$ such that
\begin{align*}
    \Gds(\mu,\nu) &= \sup_{\substack{f: \|\nabla f\|_{L^q(\Gauss)} \leq 1}} (\mu*\Gauss - \nu*\Gauss)(f)\\
    &\geq (\mu*\Gauss - \nu*\Gauss)(C\mathds{1}_A)\\
    &= C(\mu*\Gauss - \nu*\Gauss)(A)\\
    &\geq \frac{C}{2}(\mu - \nu)(A).
\end{align*}
By taking $C$ arbitrarily large, we see that $\Gds(\mu,\nu) = \infty$, establishing (i).
For (ii), we employ \cref{thm:dual-Sobolev-RKHS} and observe that \begin{equation*}
    \kappa^{(\sigma)}(x,y) = \langle x,y \rangle + \frac{1}{4\sigma^2}\langle x,y \rangle^2 + O(\sigma^{-4}).
\end{equation*}
As $\sigma \to \infty$, we obtain the pointwise limit kernel $\kappa^{(\infty)} = \langle x,y \rangle$, which induces the distance given in (ii). Swapping the limit and the expectation in \eqref{eq:MMD} is justified by the Dominated Convergence Theorem given that $\mu$ and $\nu$ are sub-Gaussian.
\qed

\section{Proofs for \cref{sec:empirical}}

\subsection{Proof of \cref{thm:slow-rate}}
\label{prf:slow-rate}
The argument relies on Proposition 7.10 from \cite{villani2003}, which is restated~next.

\begin{lemma}[Proposition 7.10 in \cite{villani2003}]
For any $1 \leq p < \infty$, we have
\begin{equation}
\label{eq:weighted-TV}
\Wp (\mu,\nu) \leq 2^{\frac{p-1}{p}}\left(\int_{\R^d} |x|^{p} \dd|\mu-\nu|(x)\right)^{1/p}.
\end{equation}
\end{lemma}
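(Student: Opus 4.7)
The plan is to exhibit the maximal coupling of $\mu$ and $\nu$ and bound its $L^p$ transport cost directly. The shape of the right-hand side---a total-variation-type quantity weighted by $|x|^p$---signals that one should keep as much mass on the diagonal as possible and only pay for the mismatch between the two measures.

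First I would invoke the Jordan decomposition to write $\mu - \nu = (\mu - \nu)_+ - (\nu - \mu)_+$, where the two nonnegative parts are mutually singular with common total mass $m := (\mu - \nu)_+(\R^d) = (\nu - \mu)_+(\R^d)$. Setting $\lambda := \mu - (\mu - \nu)_+ = \nu - (\nu - \mu)_+$, one obtains the decompositions $\mu = \lambda + (\mu - \nu)_+$ and $\nu = \lambda + (\nu - \mu)_+$ with $\lambda(\R^d) = 1 - m$. This suggests the maximal coupling $\pi \in \Pi(\mu, \nu)$: with probability $1 - m$, draw $X = Y$ from $\lambda/(1-m)$; with the remaining probability $m$, draw $X$ and $Y$ independently from $(\mu - \nu)_+/m$ and $(\nu - \mu)_+/m$. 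A direct computation verifies the marginals.

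The diagonal part contributes nothing to $\E_\pi[|X - Y|^p]$. For the off-diagonal part I would apply the convexity bound $|x - y|^p \leq 2^{p-1}(|x|^p + |y|^p)$; the prefactor $m$ coming from the probability of the off-diagonal event cancels one $m^{-1}$ normalization, while integrating the remaining independent coordinate against a probability measure simply contributes a unit factor. Combining the two resulting integrals and recognizing their sum as an integral against $|\mu - \nu|$, we arrive at
\[
\Wp(\mu, \nu)^p \leq \E_\pi[|X - Y|^p] \leq 2^{p-1}\int_{\R^d} |x|^p \dd|\mu - \nu|(x),
\]
and extracting $p$-th roots yields the claim.

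The only finicky step is verifying that the proposed $\pi$ has marginals $\mu$ and $\nu$ and carefully tracking the normalization constants through the off-diagonal integral; these are routine manipulations of the Jordan decomposition and the convexity inequality, with no deep tools required. Since this lemma is a classical result cited from \cite{villani2003}, the authors will likely just quote it rather than reproduce the short maximal-coupling argument sketched above.
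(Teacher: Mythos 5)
Your proposal is correct and matches the paper's approach: the paper simply cites Proposition 7.10 of \cite{villani2003} and notes that the bound follows by coupling $\mu$ and $\nu$ via the maximal TV-coupling and evaluating the transport cost, which is exactly the Jordan-decomposition construction and $|x-y|^p \le 2^{p-1}(|x|^p+|y|^p)$ estimate you carry out. Your computation of the marginals, the cancellation of the $m^{-1}$ normalizations, and the identification of the sum of the two residual integrals with $\int |x|^p \, \dd|\mu-\nu|(x)$ are all accurate.
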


This bound follows by coupling $\mu$ and $\nu$ via the maximal TV-coupling and evaluating the resulting transportation cost.
Invoking the lemma and Jensen's inequality, we have
\begin{align*}
&\E\left[\GWp(\hat{\mu}_n,\mu)\right]\\
&\leq 2^{\frac{p-1}{p}} \left(\int_{\R^d}|x|^p\E\big[\big|\hat{\mu}_{n}*\varphi_\sigma(x)-\mu*\varphi_\sigma(x)\big|\big] \dd x\right)^{1/p}\\
&\leq 2^{\frac{p-1}{p}} n^{-\frac{1}{2p}}\left(\int_{\R^d}|x|^p\sqrt{\Var\big[\varphi_\sigma(x-X)\big]} \dd x\right)^{1/p},
\end{align*}
where the last inequality follows because $\E\big[\varphi_\sigma(x-X)\big]=\mu*\varphi_\sigma(x)$ for all $x \in \R^d$.
Furthermore,
\begin{align*}
&\Var\big[\varphi_\sigma(x-X)\big] \le \E[\varphi_{\sigma}(x-X)^2]\\
&=\frac{1}{(2\pi \sigma^2)^d} \int_{\R^{d}} e^{-\frac{|x-y|^2}{\sigma^2}} \dd \mu(y) \\
&=\frac{1}{(2\pi\sigma^2)^d} \left (\int_{|y| \le \frac{|x|}{2}} + \int_{|y| > \frac{|x|}{2}} \right) e^{-\frac{|x-y|^2}{\sigma^2}} \dd \mu(y) \\
&\le \frac{1}{(2\pi \sigma^2)^d} \left ( \int_{|y|\le \frac{|x|}{2}} e^{-\frac{|x-y|^2}{\sigma^2}} \dd \mu(y) + \Prob \left(|X| > \frac{|x|}{2}\right)\mspace{-5mu} \right ).
\end{align*}
If $|y| \le |x|/2$, then $|x-y|^2 \ge |x|^2/4$, which yields
\[
\sqrt{\Var(\varphi_{\sigma}(x-X))} \\
\le \frac{e^{-\frac{|x|^{2}}{8\sigma^2}} + \sqrt{\Prob\left(|X| > \frac{|x|}{2} \right)}}{(2\pi \sigma^2)^{d/2}}. 
\]
Direct calculations show that 
\[\int_{\R^{d}} |x|^{p} e^{-\frac{|x|^{2}}{8\sigma^2}} \dd x = \frac{8^{\frac{d+p}{2}}\sigma^{d+p}\pi^{d/2} \Gamma((d+p)/2)}{\Gamma(d/2)}\]
and
\begin{align*}
&\int_{\R^d} |x|^{p} \sqrt{\Prob\big(|X| > |x|/2\big)} \dd x \\
= \: &\frac{2^{d+p+1}\pi^{d/2}}{\Gamma (d/2)} \int_{0}^{\infty} r^{d+p-1}\sqrt{\Prob(|X| > r)} \dd r. 
\end{align*}
Hence $\E\Big[\Wp^{(\sigma)}(\hat{\mu}_n,\mu)\Big] = O\big(n^{-1/(2p)}\big)$ if Condition (2)
holds.
The last assertion follows from Markov's inequality. 

To specify the exact constant, we combine the above bounds and simplify to obtain to obtain
\begin{align*}
&\E\left[\GWp(\hat{\mu}_n,\mu)\right] \leq 2^{1-1/p}n^{-1/2p}\\
&\quad\left( \frac{2^{d+3p/2}\sigma^p \Gamma((d+p)/2)}{\Gamma(d/2)} + \frac{2^{d/2+p+1}I}{\Gamma(d/2)\sigma^d}\right)^{1/p},
\end{align*}
where $I$ is the integral from Condition (2). By the subadditivity of $t \mapsto t^{1/p}$ and properties of the gamma function, we bound the RHS above by
\begin{align*}
&8 n^{-1/2p} \left( 2^{d/p}\sigma \sqrt{d/2 + p + 1} + \frac{2^{d/(2p)}I^{1/p}}{\Gamma(d/2)^{1/p}\sigma^{d/p}}\right).
\end{align*}
If $\mu$ is $\beta$-sub-Gaussian, then $\Prob(|X| > r) \leq 2^{d/2}e^{-\frac{r^2}{4\beta^2}}$ and we can bound
\begin{align*}
   I &= \int_0^\infty r^{d+p-1}\sqrt{\Prob(|X| > r)}\, \dd r\\
   &\leq 2^{d/4} \int_0^\infty r^{d+p-1} e^{-r^2/(4\beta^2)} \,\dd r\\
   &= 2^{d/4-1}(2\beta)^{d+p} \Gamma((d+p)/2)\\
   &= 2^{5d/4+p-1} \beta^{d+p} \Gamma((d+p)/2).
\end{align*}
Plugging this into the previous bound, using properties of the gamma function, and simplifying, we obtain
\begin{align*}
&\E\left[\GWp(\hat{\mu}_n,\mu)\right] \leq 8 n^{-1/2p}\\
&\left( 2^{d/p}\sigma \sqrt{d + p} + 2^{7d/(4p)}\beta^{d/p+1}\sqrt{d+p} \sigma^{-d/p} \right)\\
&\leq 8 \cdot 4^{d/p} \sqrt{d+p} \left[ \sigma + \beta \left(\frac{\beta}{\sigma}\right)^{d/p} \right] \cdot n^{-1/(2p)}.
\end{align*}
\qed

\subsection{Proof of \cref{thm:limit-distribution}}
\label{prf:limit-distribution}
For $p \geq 1$, a probability measure $\gamma \in \cP$ is said to satisfy the \textit{$p$-Poincar\'{e} inequality} if there exists a finite constant $D$ such that 
\begin{equation}
\| f - \gamma(f) \|_{L^{p}(\gamma)} \le D \| \nabla f \|_{L^{p}(\gamma; \R^d)}, \quad \forall f \in C_{0}^{\infty}.
\label{eq:Poincare}
\end{equation}
The smallest constant satisfying the above is denoted by $D_{p}(\gamma)$.
We note in particular that $\Gauss$ satisfies a $p$-Poincar\'e inequality for all $p \geq 1$ (see, e.g., \cite{boucheron_concentration_2013} and Theorem 2.4 of \cite{milman2009}).

Let $\partial_{j} = \partial/\partial x_{j}$.
For any multi-index $k = (k_1,\dots,k_d) \in \N_{0}^{d}$, define the differential operator
\[
\partial^k = \partial_1^{k_1} \cdots \partial_d^{k_d},
\]
and let $\bar{k} = \sum_{j=1}^{d}k_j$. We start by bounding the derivatives of centered functions with bounded homogeneous Sobolev norm after Gaussian smoothing.

\begin{lemma}
\label{lem:derivative}
Fix $\eta > 0$. 
Pick any $f \in C_{0}^{\infty}$ such that $\|f\|_{\dot{H}^{1,q}(\cN_{\sigma})} \le 1$, and let $f_{\sigma} = f*\varphi_{\sigma} - \Gauss(f)$.
Then for any multi-index $k = (k_1,\dots,k_d) \in \N_{0}^{d}$,
\[
|\partial^{k} f_{\sigma}(x)| \lesssim (D_q(\cN_{\sigma}) \vee \sigma^{-\bar{k}+1})
\exp \left ( \frac{(p-1)(1+\eta)|x|^2}{2\sigma^2} \right)
\]
up to constants independent of $f, x$, and $\sigma$. 
\end{lemma}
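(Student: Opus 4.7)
The plan is to split by whether $\bar{k} = 0$ or $\bar{k} \geq 1$, bounding $|\partial^k f_\sigma(x)|$ in both cases by Hölder's inequality applied against the Gaussian density ratio $\varphi_\sigma(x - \cdot)/\varphi_\sigma(\cdot)$, whose $L^p(\Gauss)$ norm can be computed by an explicit Gaussian integral. The $\bar{k}=0$ case will produce the $D_q(\Gauss)$ branch of the bound via the Poincaré inequality, while the $\bar{k} \geq 1$ case will produce the $\sigma^{1-\bar{k}}$ branch by pulling derivatives onto $\varphi_\sigma$.

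For $\bar{k}=0$, the $q$-Poincaré inequality yields $\|f-\Gauss(f)\|_{L^q(\Gauss)} \leq D_q(\Gauss)\|\nabla f\|_{L^q(\Gauss;\R^d)} \leq D_q(\Gauss)$. Since $f_\sigma(x) = \int(f(y)-\Gauss(f))\varphi_\sigma(x-y)\,\dd y$, inserting $\varphi_\sigma(y)/\varphi_\sigma(y)$ and applying Hölder gives
\[
|f_\sigma(x)| \leq \|f-\Gauss(f)\|_{L^q(\Gauss)} \cdot \bigl\|\varphi_\sigma(x-\cdot)/\varphi_\sigma(\cdot)\bigr\|_{L^p(\Gauss)}.
\]
A direct Gaussian computation (complete the square in $y$) shows that the second factor equals $\exp((p-1)|x|^2/(2\sigma^2))$, so the $\bar{k}=0$ bound follows through the $D_q(\Gauss)$ branch without invoking the $\eta$ slack.

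For $\bar{k}\geq 1$, pick $i$ with $k_i\geq 1$ and distribute derivatives across the convolution: place $\partial_i$ on $f$ and $\partial^{k-e_i}$ on $\varphi_\sigma$. The chain rule gives $\partial^{k-e_i}\varphi_\sigma(u) = (-\sigma)^{-(\bar{k}-1)} H_{k-e_i}(u/\sigma)\varphi_\sigma(u)$, with $H_{k-e_i}$ the tensor-product probabilists' Hermite polynomial of total degree $\bar{k}-1$. Combining with Hölder against $\partial_i f$ (which has $\|\partial_i f\|_{L^q(\Gauss)}\leq 1$) yields
\[
|\partial^k f_\sigma(x)| \leq \sigma^{-(\bar{k}-1)} \bigl\| H_{k-e_i}((x-\cdot)/\sigma)\,\varphi_\sigma(x-\cdot)/\varphi_\sigma(\cdot) \bigr\|_{L^p(\Gauss)}.
\]
Rearranging $\varphi_\sigma(x-y)^p/\varphi_\sigma(y)^{p-1}$ and completing the square in $y$ identifies the inner integral, up to the factor $\exp(p(p-1)|x|^2/(2\sigma^2))$, as $\E_{Y\sim\cN(px,\sigma^2\I)}\bigl[|H_{k-e_i}((x-Y)/\sigma)|^p\bigr]$. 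Substituting $Y = px + \sigma Z$ with $Z\sim\cN(0,\I)$ turns the Hermite argument into $-(p-1)x/\sigma - Z$, so the $p$-th root of the expectation is bounded by a polynomial in $|x|/\sigma$ of degree at most $\bar{k}-1$.

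The final step absorbs this polynomial into the exponent via $t^m \leq C_{\eta,m}\exp(\eta(p-1)t^2/2)$ at $t = |x|/\sigma$, which costs only a $\sigma$-independent constant and produces the extra factor $\exp(\eta(p-1)|x|^2/(2\sigma^2))$. The resulting bound $|\partial^k f_\sigma(x)| \lesssim \sigma^{1-\bar{k}}\exp((1+\eta)(p-1)|x|^2/(2\sigma^2))$ matches the claim since $\sigma^{1-\bar{k}} \leq D_q(\Gauss) \vee \sigma^{1-\bar{k}}$. The main obstacle is the bookkeeping in the $\bar{k}\geq 1$ case: verifying that the Hermite moment under the shifted Gaussian carries no hidden $\sigma$-dependence beyond the explicit $\sigma^{-(\bar{k}-1)}$ prefactor, and that the polynomial-to-exponential absorption produces precisely the $\sigma$-independent constants promised. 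The $\eta > 0$ slack is essential here, as without it the $|x|^{\bar{k}-1}$ growth from the Hermite factor cannot be hidden inside the exponent.
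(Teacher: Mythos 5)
Your proposal is correct and follows essentially the same route as the paper: one derivative is moved onto $f$ by integration by parts, the remaining $\bar{k}-1$ derivatives fall on $\varphi_\sigma$ producing a Hermite factor with the $\sigma^{1-\bar{k}}$ prefactor, and H\"older's inequality against the ratio $\varphi_\sigma(x-\cdot)/\varphi_\sigma(\cdot)$ together with the $q$-Poincar\'e inequality (for $\bar{k}=0$) yields the Gaussian-growth bound. The only cosmetic difference is that the paper bounds the Hermite product by $1+|x-y|^{\bar{k}}$ and absorbs it into a slightly narrower Gaussian kernel before applying H\"older, whereas you keep it inside the $L^p$ norm, evaluate the shifted-Gaussian moment, and absorb the resulting polynomial in $|x|/\sigma$ into the $\eta$-inflated exponent afterwards; both are valid and give the same constants' structure.
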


\begin{proof}[Proof of Lemma \ref{lem:derivative}]
Observe that 
\begin{align*}
f_{\sigma}(x) &= \int \varphi_{\sigma}(x-y) f(y) \dd y\\
&= \int \frac{\varphi_{\sigma}(x-y)}{\varphi_{\sigma}(y)} f(y) \varphi_{\sigma}(y) \dd y. 
\end{align*}
Applying H\"{o}lder's inequality, we have 
\[
|f_{\sigma}(x)| \le \left [\int \frac{\varphi_{\sigma}^{p}(x-y)}{\varphi_{\sigma}^{p-1}(y)} \dd y \right ]^{1/p} \| f \|_{L^q (\cN_{\sigma})}.
\]
Here, since $\| \nabla f \|_{L^q(\cN_{\sigma};\R^d)} =\| f \|_{\dot{H}^{1,q}(\cN_{\sigma})} \le 1$, we have 
\[
\| f \|_{L^q(\cN_{\sigma})} \le D_q(\cN_{\sigma}) \| \nabla f \|_{L^q(\cN_{\sigma};\R^d)} \le D_q(\cN_{\sigma}). 
\]
Observe that 
\begin{align*}
&\int \frac{\varphi_{\sigma}^{p}(x-y)}{\varphi^{p-1}_{\sigma}(y)} \dd y\\ &= \frac{1}{(2\pi \sigma^2)^{d/2}}  \int \exp \left [ -\frac{p |x-y|^2 - (p-1)|y|^2 }{2\sigma^2}\right ] \dd y \\
&=e^{-p|x|^2/(2\sigma^2)} \int e^{p\langle x,y \rangle/\sigma^2}  \varphi_{\sigma}(y) \, \dd y \\
&=  \exp \left ( \frac{p(p-1)|x|^2}{2\sigma^2} \right ).
\end{align*}
This yields that 
\[
|f_{\sigma}(x)| \le  D_q(\cN_{\sigma}) \exp \left ( \frac{(p-1)|x|^2}{2\sigma^2} \right ),
\]
establishing the claim when $\bar{k} = 0$.

Next, we note that 
\begin{align*}
\nabla f_{\sigma}(x) &= \int [\nabla_{x} \varphi_{\sigma}(x-y) ] f(y) \dd y\\ &= -\int [\nabla_{y} \varphi_{\sigma}(x-y) ] f(y) \dd y\\
&=\int \varphi_{\sigma}(x-y) \nabla_{y} f(y) \dd y.
\end{align*}
Since $\| \nabla f \|_{L^{q}(\cN_\sigma;\R^d)} \le 1$, we can apply the preceding argument to conclude that 
\[
|\nabla f_{\sigma}(x)| \le \exp \left ( \frac{(p-1)|x|^2}{2\sigma^2} \right ).
\]

Finally, we extend to arbitrary derivatives, observing that for any $i=1,\dots,d$ and $k \in \mathbb{N}_{0}^{d}$,
\begin{align}
\label{eq:gauss-deriv-bound}
\begin{split}
&\partial^k \partial_i f_{\sigma}(x)\\ &= \int [\partial_i f(y)] \varphi_{\sigma}(x\mspace{-2mu}-\mspace{-2mu}y) \mspace{-5mu} \prod_{j=1}^{d} (-1)^{k_{j}} \sigma^{-k_{j}} \mspace{-3mu} \He_{k_{j}}\mspace{-5mu}  \left (\mspace{-5mu}\frac{x_{j}-y_{j}}{\sigma} \mspace{-5mu}\right)\mspace{-5mu} \dd y.
\end{split}
\end{align}
Here, we use that
\[
\partial^k \varphi_\sigma(z) = \varphi_{\sigma}(z) \left [ \prod_{j=1}^{d} (-1)^{k_{j}} \sigma^{-k_{j}} \He_{k_{j}} \big (z_j/\sigma \big)\right ],
\]
where $\He_n$ is the Hermite polynomial of degree $n$ defined by
\[
\He_n(x) = (-1)^n e^{x^2/2} \dv[n]{x} e^{x^2/2}.
\]
Return to \eqref{eq:gauss-deriv-bound}. Pick any $\eta > 0$. Since the product term in \eqref{eq:gauss-deriv-bound} can be bounded (up to constants) by $1 + |x - y|^{\bar{k}}$, we have
\[
|\partial^k \partial_j f_{\sigma}(x)| \lesssim \sigma^{-\bar{k}} \int |\partial_j f(y)| \: \varphi_{\sigma(1+\eta)^{-1/2}}(x-y) \dd y.
\]
up to a constant independent of $f,x$,and $\sigma$. The desired bound follows by the same argument we applied to control $|\nabla f_\sigma(x)|$.

Now, to be more precise with constants, we note that since $D_2(\Gauss) = \sigma^2$ and $\Gauss$ is log-concave, we have by Theorem 2.4 of \cite{milman2009} that $D_q(\Gauss) \leq C \sigma^2$ for all $q \in [1,\infty]$, for some absolute constant $C > 0$ . Next, we recall the explicit formula
\[
\He_n(x) = n! \sum_{m=0}^{\lfloor n/2 \rfloor} \frac{(-1)^{m}}{m!(n-2m)!} \frac{x^{n-2m}}{2^m}.
\]
Using $|x|^m \leq 1 + |x|^n$ for $m = 1, \dots, n$, we (quite loosely) bound
\[
|\He_n(x)| \leq n! (1 + |x|^n) \sum_{m=0}^{\lfloor n/2 \rfloor} \frac{1}{m!(n-2m)!2^m}.
\]
This summand is unimodal and attains its maximum at $m = \big\lceil \frac{n}{2} - \frac{\sqrt{n+2}}{2} \big\rceil$. Using this and Stirling's approximation, we find
\begin{align*}
|\He_n(x)| &\leq \frac{n! (1 + |x|^n) (n + 4)}{\Gamma\big( \frac{n}{2} - \frac{\sqrt{n+2}}{2} \big) \Gamma\big(\sqrt{n+2} - 1\big) 2^{\frac{n}{2} - \frac{\sqrt{n+2}}{2}}}\\
&\leq (1 + |x|^n) (cn)^{n/2},
\end{align*}
for some absolute constant $c > 0$. Now, the product term in \eqref{eq:gauss-deriv-bound} is bounded in absolute value by
\begin{align*}
\sigma^{-\bar{k}} \prod_{j=1}^d (1 + |z_j|)^n(c k_j)^{k_j/2} \leq \sigma^{-\bar{k}} (c \bar{k})^{\bar{k}/2}(1 + |z|)^{\bar{k}}.
\end{align*}
With a bit of calculus, we compute
\begin{align*}
    |\partial^k \partial_j f_{\sigma}(x)| \leq \: & (c' \bar{k})^{\bar{k}} \sigma^{-{\bar{k}}}(1 + \eta)^{d/2}\\
    &\int |\partial_j f(y)| \: \varphi_{\sigma(1+\eta)^{-1/2}}(x-y)  \dd y,
\end{align*}
for some second constant $c' > 0$ and any $\eta > 0$, so long as $\sigma \leq 1$, say. Applying the same argument used to control $|\nabla f_\sigma(x)|$, we bound
\begin{align*}
    &\int \frac{\varphi_{\sigma(1+\eta)^{-1/2}}(x-y)^p}{\varphi_\sigma(y)^{p-1}} \, \dd y \leq \\
    &(1 + p\eta)^d e^{-\frac{p(1+\eta)|x|^2}{2\sigma^2}} \int e^{-\frac{p(1+\eta)\langle x,y\rangle}{\sigma^2}} \varphi_{\sigma(1+\eta p)^{-1/2}}(y) \, \dd y\\
    &= (1 + p\eta)^d e^{-\frac{p(1+\eta)|x|^2}{2\sigma^2} + \frac{p^2(1 + \eta)^2|x|^2}{(1+\eta p)\sigma^2}}\\
    &= (1 + p\eta)^d e^{-\frac{p(1+\eta)|x|^2}{2\sigma^2} + \frac{p^2(1 + \eta)^2|x|^2}{(1+\eta p)\sigma^2}},
\end{align*}
which yields
\begin{align*}
    |\partial^k \partial_j f_{\sigma}(x)| \leq \,& (c' \bar{k})^{\bar{k}}\eta^{-\bar{k}/2} \sigma^{-{\bar{k}}}(1 + \eta)^{3d/2} \\ &\exp\left(\frac{|x|^2}{2\sigma^2}\left(\frac{p(1 + \eta)^2}{(1+\eta p)} - (1 + \eta)\right)\right)\\
    \leq \,&(c' \bar{k})^{\bar{k}}\eta^{-\bar{k}/2} \sigma^{-{\bar{k}}}(1 + \eta)^{3d/2}\\ &\exp\left(\frac{(p-1)|x|^2}{2\sigma^2}\left(1 + \eta p + \eta \right)\right).
\end{align*}
Substituting $\eta$ with $\eta/(p+1)$ and combining with the previous results, we establish the bound
\begin{align*}
    &|\partial^k f_\sigma(x)| \leq\\
    &(C')^d \bar{k}^{\bar{k}-1}p^{3d/2} \sigma^{1-\bar{k}} \exp\left(\frac{(p-1)|x|^2}{\sigma^2}\right)
\end{align*}
for some absolute constant $C' > 0$ and any $k \in \N_0^d$, when $\sigma \leq 1$.
\end{proof}

Next, we present a useful lemma concerning empirical approximation for IPMs whose function classes are sufficiently well-behaved.

\begin{lemma}
\label{lem:Donsker}
Let $\cF \subset C^{\alpha}(\R^{d})$ be a function class where $\alpha$ is a positive integer with $\alpha > d/2$, and let $\{ \cX_{j} \}_{j=1}^{\infty}$ be a cover of $\R^{d}$ consisting of nonempty bounded convex sets with bounded diameter. 
Set $M_j = \sup_{f \in \cF} \| f \|_{C^{\alpha}(\cX_j)}$ with $\| f \|_{C^{\alpha}(\cX_j)} = \max_{\bar{k} \le \alpha} \sup_{x \in \mathrm{int}(\cX_j)} |\partial^{k}f(x)|$. 
If $\sum_{j=1}^{\infty} M_{j}\mu(\cX_j)^{1/2} < \infty$, then $\cF$ is $\mu$-Donsker and $\E\big[\|\hat{\mu}_n-\mu\|_{\infty,\cF}\big] \lesssim n^{-1/2}\sum_{j=1}^{\infty}M_j \mu(\cX_j)^{1/2}$ up to constants that depend only on $d, \alpha$, and $\sup_{j}\diam (\cX_j)$. 
\end{lemma}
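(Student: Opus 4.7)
\medskip

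The plan is to localize to each $\cX_j$, obtain a bracketing-entropy bound on the restricted class, apply Dudley's maximal inequality piecewise, and then sum the contributions, using the summability hypothesis $\sum_j M_j \mu(\cX_j)^{1/2} < \infty$ as the "envelope". The condition $\alpha > d/2$ is exactly what makes the entropy integral converge on each piece.

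\textbf{Step 1: Localization.} For each $j$, write $\cF_j = \{ f|_{\cX_j} : f \in \cF \}$, a subset of $C^{\alpha}(\cX_j)$ with $\|f\|_{C^\alpha(\cX_j)} \le M_j$ uniformly over $\cF_j$. Since $\{\cX_j\}$ covers $\R^d$ (we may assume disjointness after a measurable refinement, which only improves the bound), we have for any $f \in \cF$,
\[
(\hat{\mu}_n - \mu)(f) \;=\; \sum_{j=1}^\infty (\hat{\mu}_n - \mu)(f \mathds{1}_{\cX_j}),
\]
so $\|\hat{\mu}_n - \mu\|_{\infty,\cF} \le \sum_j \sup_{f \in \cF_j}|(\hat{\mu}_n - \mu)(f \mathds{1}_{\cX_j})|$.

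\textbf{Step 2: Entropy on each piece.} Because $\cX_j$ is bounded, convex, with diameter bounded by some $D$ independent of $j$, the classical covering bound for Hölder/smooth function classes (see, e.g., Theorem~2.7.1 of van der Vaart \& Wellner, 1996) gives
\[
\log N\bigl(\eps, \cF_j, \|\cdot\|_{\infty,\cX_j}\bigr) \;\le\; C_{\alpha,d,D}\,(M_j/\eps)^{d/\alpha},
\]
valid for all $\eps \le M_j$. Converting to $L^2(\mu)$-bracketing by inflating each $\eps$-ball in the sup-norm to a bracket of $L^2(\mu)$-width $2\eps\,\mu(\cX_j)^{1/2}$, we get
\[
\log N_{[\,]}\bigl(\eps, \cF_j \mathds{1}_{\cX_j}, L^2(\mu)\bigr) \;\lesssim\; \bigl(M_j\mu(\cX_j)^{1/2}/\eps\bigr)^{d/\alpha}.
\]

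\textbf{Step 3: Dudley / maximal inequality.} The envelope of $\cF_j \mathds{1}_{\cX_j}$ in $L^2(\mu)$ is at most $M_j \mu(\cX_j)^{1/2}$. Applying the standard bracketing maximal inequality yields
\[
\E\sup_{f \in \cF_j}|(\hat{\mu}_n - \mu)(f\mathds{1}_{\cX_j})| \;\lesssim\; n^{-1/2}\mspace{-3mu} \int_0^{M_j\mu(\cX_j)^{1/2}}\mspace{-15mu} \sqrt{\log N_{[\,]}(\eps,\cdot)}\, \dd\eps.
\]
Plugging in the bound from Step 2, the integrand is $\lesssim (M_j \mu(\cX_j)^{1/2}/\eps)^{d/(2\alpha)}$, and since $d/(2\alpha) < 1$ by hypothesis, the integral evaluates to a constant times $M_j \mu(\cX_j)^{1/2}$. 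Hence
\[
\E\sup_{f \in \cF_j}|(\hat{\mu}_n - \mu)(f\mathds{1}_{\cX_j})| \;\lesssim\; n^{-1/2} M_j \mu(\cX_j)^{1/2}.
\]

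\textbf{Step 4: Summing and Donsker property.} Summing over $j$ (justified by Fubini/Tonelli since all terms are nonnegative) and invoking $\sum_j M_j \mu(\cX_j)^{1/2} < \infty$ gives the claimed expectation bound. For the Donsker claim, apply the same bracketing machinery to the full class $\cF$: truncate to $J$ pieces, note that each truncated class is Donsker (finite bracketing integral on a finite union of pieces), and use the tail bound $\sum_{j>J} M_j \mu(\cX_j)^{1/2} \to 0$ to obtain asymptotic equicontinuity in $L^2(\mu)$, which combined with pointwise CLTs yields the Donsker property by the standard characterization.

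\textbf{Main obstacle.} The nontrivial technical point is the entropy calculation on unbounded smooth function classes via the localization: one must verify that the $L^2(\mu)$-bracketing numbers of the global class $\cF$ obey a tail-summable estimate inherited from the $\cX_j$ decomposition (as opposed to just bounding expectations). This requires carefully gluing per-piece brackets into global brackets whose $L^2(\mu)$-widths add up correctly—handled by the Cauchy--Schwarz-type observation that the bracket width over a disjoint union is the $\ell^2$ combination of per-piece widths, which is dominated by the $\ell^1$ sum $\sum_j M_j \mu(\cX_j)^{1/2}$.
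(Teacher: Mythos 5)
Your proposal is correct and follows essentially the same route as the paper: localize to the (disjointified) pieces $\cX_j$, obtain a per-piece bound $\E\sup_{f\in\cF_j}|(\hat{\mu}_n-\mu)(f\mathds{1}_{\cX_j})|\lesssim n^{-1/2}M_j\mu(\cX_j)^{1/2}$ from the entropy of smooth function classes on bounded convex sets, and sum using the hypothesis $\sum_j M_j\mu(\cX_j)^{1/2}<\infty$. The only difference is that you unpack what the paper cites as a black box — the per-piece bound and the countable gluing are imported from van der Vaart (1996, Theorem 1.1 and the discussion before Corollary 2.1), whereas you re-derive them via Theorem 2.7.1 of van der Vaart and Wellner, the bracketing maximal inequality, and a truncation/asymptotic-equicontinuity argument, which is precisely the content of the cited theorem.
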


\begin{proof}[Proof of Lemma \ref{lem:Donsker}]

The lemma follows from Theorem 1.1 in \cite{vanderVaart1996}. Let $I_{1} = \cX_1$ and $I_j = \cX_{j} \setminus \bigcup_{k=1}^{j-1}\cX_{k}$ for $j=2,3,\dots$. The collection $\{ I_j \}$ forms a partition of $\R^{d}$. Define $\cF_{\cX_{j}} = \{ f\mathds{1}_{\cX_{j}} : f \in \cF \}$ and $\cF_{I_{j}} = \{ f\mathds{1}_{I_{j}} : f \in \cF \}$. 
Let $F = \sum_{j} M_{j}1_{I_{j}}$, which gives an envelope for $\cF$.
Observe that 
\[\mu(F^2) = \sum_{j} M_j^2 \mu(I_j) \le \sum_j M_j^2 \mu(\cX_j) < \infty,
\]
which also ensures that $\cF \subset L^2 (\mu)$.

In view of the discussion before Corollary 2.1 in \cite{vanderVaart1996}, we see that each $\cF_{\cX_{j}}$ is $\mu$-Donsker (which implies that $\cF_{I_{j}}$ is $\mu$-Donsker as $\cF_{I_{j}}$ can be viewed as a subset of $\cF_{\cX_{j}}$) and 
\begin{align*}
\E[ \| \sqrt{n}(\hat{\mu}_n-\mu) \|_{\infty,\cF_{I_{j}}}] &\le \E[ \| \sqrt{n}(\hat{\mu}_n-\mu) \|_{\infty,\cF_{\cX_{j}}}] \\
&\lesssim M_j \mu(\cX_j)^{1/2}
\end{align*}
up to constants that depend only on $d, \alpha$, and $\sup_{j}\diam (\cX_j)$.
The RHS is summable over $j$ so that by Theorem 1.1 in \cite{vanderVaart1996}, $\cF$ is $\mu$-Donsker.
The bound on $\E[\|\hat{\mu}_n-\mu\|_{\infty,\cF}]$ follows by summing up bounds on $\E[\|\hat{\mu}_n-\mu\|_{\infty,\cF_{I_j}}]$.
\end{proof}

We are now in position to prove \cref{thm:limit-distribution}. 

\begin{proof}[Proof of \cref{thm:limit-distribution}]

Observe that
\begin{equation}
\big ((\hat{\mu}_n - \mu)*\cN_\sigma \big) (f) = (\hat{\mu}_n - \mu)(f*\varphi_\sigma).
\label{eq: convolution identity}
\end{equation}
and consider the function classes
\begin{align}
\label{eq:sobolev-ball}
\cF &=\big \{ f \in C_{0}^{\infty} :  \| f \|_{\dot{H}^{1,q}(\cN_{\sigma})} \le 1 \big\}\\ \cF*\varphi_{\sigma} &= \big \{ f*\varphi_{\sigma} : f \in \cF \big \}. 
\end{align}
We apply \cref{lem:Donsker} to show that the function class $\cF*\varphi_{\sigma}$ is $\mu$-Donsker, implying the limit described in the theorem statement. 
Since for any constant $a \in \R$ and any function $f \in \cF$, $(\hat{\mu}_n - \mu)(f * \varphi_\sigma) = (\hat{\mu}_n - \mu)\big((f-a)* \varphi_\sigma \big)$, we only have to verify the conditions of Lemma \ref{lem:Donsker} for $\cF*\varphi_{\sigma}$ with $\cF$ replaced by $\{ f - \Gauss(f) : f \in C_{0}^{\infty}, \| f \|_{\dot{H}^{1,q}(\cN_{\sigma})} \le 1 \}$. 

We first construct a cover $\{ \cX_j \}_{j=1}^{\infty}$ as follows. Let $B_r = B(0,r)$. For $\delta > 0$ fixed and $r =2,3,\dots$, let $\{ x_{1}^{(r)},\dots,x_{N_{r}}^{(r)} \}$ be a minimal $\delta$-net of $B_{r\delta} \setminus B_{(r-1)\delta}$.
Set $x_{1}^{(1)} = 0$ with $N_1 = 1$. To bound $N_r$, we show that the covering number $N(B_{r\delta} \setminus B_{(r-1)\delta}, | \cdot |, \epsilon)$, defined as the size of the smallest $\epsilon$-cover of $B_{r\delta} \setminus B_{(r-1)\delta}$, satisfies
\begin{equation}
N(B_{r\delta} \setminus B_{(r-1)\delta}, | \cdot |, \epsilon) \le \left ( \frac{2r\delta}{\epsilon}+1 \right)^{d} \mspace{-3mu}-\mspace{-3mu} \left ( \frac{2(r\mspace{-3mu}-\mspace{-3mu}1)\delta}{\epsilon} \mspace{-3mu}-\mspace{-3mu}1  \right)^{d}\label{EQ:Nr_bound}
\end{equation}
for $0 < \epsilon \le 2(r-1)\delta$, according to a volumetric argument. 
Specifically, let $\{ x_1,\dots,x_N \}$ be a maximal $\epsilon$-separated subset of $B_{r\delta} \setminus B_{(r-1)\delta}$.
By maximality, $\{ x_1,\dots,x_N \}$ is an $\epsilon$-net of $B_{r\delta} \setminus B_{(r-1)\delta}$. 
By construction, 
\[
\bigcup_{j=1}^{N}B(x_j,\epsilon/2) \subset B_{r\delta+\epsilon/2} \setminus B_{(r-1)\delta-\epsilon/2}
\]
and the balls of the left-hand side (LHS) are disjoint. Comparing volumes, we have 
\[
N (\epsilon/2)^{d} \le (r \delta+\epsilon/2)^{d} - ((r-1)\delta-\epsilon/2)^{d}.
\]
This yields the bound on the covering number.

Given \eqref{EQ:Nr_bound}, we obtain $N_{r} \le (2r+1)^d-(2r-3)^d = O(r^{d-1})$. Set 
\[
\cX_j = B(x_j^{(r)},\delta), \ j=\sum_{k=1}^{r-1}N_{k}+1,\dots,\sum_{k=1}^{r}N_{k}.
\]
By construction, $\{ \cX_j \}_{j=1}^{\infty}$ forms a cover of $\R^{d}$ with diameter $2\delta$.
Set $\alpha = \lfloor d/2 \rfloor +1$ and $M_j = \sup_{f \in \cF:\, \cN_{\sigma}(f) = 0} \| f*\varphi_{\sigma} \|_{C^{\alpha}(\cX_{j})}$.
Fix any $\eta > 0$.
By \cref{lem:derivative}, 
\begin{align*}
&\max_{\sum_{k=1}^{r-1}N_{k}+1 \le j \le \sum_{k=1}^{r} N_{j}} M_j\\
\lesssim & \:\sigma^{-\lfloor d/2 \rfloor}  \exp \left ( \frac{(1 + \eta)(p-1)r^2 \delta^2}{2\sigma^2} \right )
\end{align*}
up to constants independent of $r$ and $\sigma$. 
Hence, in view of Lemma \ref{lem:Donsker}, the $\mu$-Donsker property of $\cF*\varphi_\sigma$ holds if 
\begin{align*}
\sum_{r=1}^{\infty} r^{d-1}\exp \left ( \frac{(1 + \eta)(p-1)r^2 \delta^2}{2\sigma^2} \right )\hspace{-1mm} \sqrt{\Prob(|X| > (r-1)\delta)}
\end{align*}
is finite. By Riemann approximation, the sum above can be bounded by $\delta^{-d-1}$ times
\[
 \int_{1}^{\infty}t^{d-1}\exp \left ( \frac{(1 + \eta)(p-1)t^2}{2\sigma^2 } \right ) \vspace{-1mm}\sqrt{\Prob(|X| > t-2\delta)} \dd t
\]
which is finite under our assumption by choosing $\eta$ and $\delta$ sufficiently small, and absorbing $t^{d-1}$ by the exponential term.

For more precise constants, we assume that $\mu$ is contained in a ball of radius $R$ centered at the origin. Then, using the constants from the proof of \cref{lem:derivative} with $\eta = 1$ and taking $\delta \leq R/2$, we find that the $\sqrt{n}\E\big[\Gds(\hat{\mu}_n,\mu)\big]$ is bounded by
\begin{align*}
&(C')^d d^{d/2}p^{3d/2} \sigma^{1-\bar{k}} 4^{d-1} \exp\left(\frac{4(p-1)R^2}{\sigma^2}\right)\\
    \leq & \, (cdp^3\sigma^{-1})^{d/2}e^{pR^2\sigma^{-2}},
\end{align*}
for some absolute constant $c > 0$, so long as $\sigma \leq 1$, say.
\end{proof}

\subsection{Proof of \cref{cor:fast-rate}}

The moment convergence of $\sqrt{n}\Gds(\hat{\mu}_n,\mu)$ follows from Lemma 2.3.11 in \cite{vandervaart1996book}. Finiteness of $\E[\| G \|_{\dot{H}^{-1,p}(\cN_\sigma)}]$ follows from Proposition A.2.3 in \cite{vandervaart1996book}. The second result follows from Theorem 1 after centering $\mu$ and $\hat{\mu}_n$ by the mean of $\mu$. Plugging in the constant from the previous proof, we find that
\[
\sqrt{n}\E\big[\GWp(\hat{\mu}_n,\mu)] \leq (cdp^3\sigma^{-1})^{d/2}e^{pR^2\sigma^{-2}}
\]
when $\mu$ is contained in a ball of radius $R$ and $\sigma \leq 1$, for some (different) constant $c > 0$.

\qed

\subsection{Proof of \cref{lem:subGaussian}}
Without loss of generality, we may assume that $X$ has mean zero. 
If $X$ is $\beta$-sub-Gaussian, then 
\[
\E[e^{\eta |X|^2}] \le \underbrace{(1-2\beta^2\eta)^{-d/2}}_{=C_\eta} \quad \text{if $\eta < 1/(2\beta^2)$}.
\]
By Markov's inequality, we have 
\[
\Prob (|X| > r) \le C_{\eta} e^{-\eta r^2}.
\]
Thus, 
\[
\int_{0}^{\infty} e^{\frac{\theta r^2}{2\sigma^2}} \sqrt{\Prob(|X|>r)} \dd r \le C_{\eta}^{1/2} \int_{0}^{\infty} e^{-\left ( \eta - \frac{\theta}{\sigma^2} \right) \frac{r^2}{2}} \dd r. 
\]
The right hand side is finite if and only if $\eta > \frac{\theta}{\sigma^2}$.
Such $\eta$ exists if and only if
\[
\frac{1}{2\beta^2} > \frac{\theta}{\sigma^2}, \quad \text{i.e.,} \quad \beta < \frac{\sigma}{\sqrt{2\theta}}. 
\]
Sine $\theta > p-1$ is arbitrary, we obtain the desired conclusion. \qed

\subsection{Proof of \cref{prop:concentration1}}
\label{prf:concentration1}
Given the comparison result of Theorem 1 and our characterization of $\E[\Gds(\hat{\mu}_n,\mu)]$ in the proof of Theorem 3, it suffices to prove
\begin{equation}
\Pr(\Gds(\hat{\mu}_n,\mu) \geq \E[\Gds(\hat{\mu}_n,\mu)] + t) \leq e^{c'nt^2}
\label{eq:Gds-concentration}
\end{equation}
for some constant $c' > 0$ independent of $n$ and $t$.
We apply Corollary 1 of \cite{Goldfeld2020GOT}, where the 1-Lipschitz function class $\mathsf{Lip}_1$ is substituted with $\cF_0 = \{ f - \Gauss(f): f \in C_0^\infty, \|f\|_{\dot{H}^{1,q}(\Gauss)} \leq 1 \}$. The desired conclusion follows according to the same argument, using McDiarmid's inequality, upon observing that for $x,x' \in \mathsf{supp}(\mu)$,
\begin{align*}
&\sup_{f \in \cF_0}\,  (f * \varphi_\sigma)(x) - (f * \varphi_\sigma)(x')\\
&\leq 2 \sup_{f \in \cF_0, y \in \mathsf{supp}(\mu)} (f * \varphi_\sigma)(y)\\
&\leq 2D_q(\Gauss)\exp\left(\frac{(p-1)R^2}{2\sigma^2}\right),
\end{align*}
where the final inequality uses \cref{lem:derivative}.  \qed

\section{Proofs for \cref{sec:computation}}
\label{prfs:computation}

First, we comment on a subtle detail regarding the construction of the homogeneous Sobolev space.

\begin{remark}%
\label{rem:explicit-construction}
For $\gamma \in \cP$ dominating the Lebesgue measure and satisfying the $p$-Poincar\'{e} inequality, the homogeneous Sobolev space $\dot{H}^{1,p}(\gamma)$ can be constructed as a function space over $\R^d$ that contains $\dot{C}_0^{\infty}$ as a dense subset in an explicit manner (without relying on the completion, which is an abstract metric-topological operation). See Appendix \ref{appen:Sobolev_space} for details of the construction. 
\end{remark}

Next, we observe that the inner product on $\dot{H}_0^{1,2}(\Gauss) * \varphi_\sigma$ is well-defined. That is, for $f,g \in \dot{H}_0^{1,2}(\Gauss)$, we show that $f * \varphi_\sigma = g * \varphi_\sigma$ if and only if $f = g$ almost everywhere. This requires an application of Wiener's Tauberian theorem for $L^2(\R^d)$, with a proof provided for completeness.

\begin{theorem}[Wiener's Tauberian theorem for $L^2$]
\label{thm:tauberian}
If the Fourier transform $\mathsf{F}[f]$ of $f \in L^2(\R^d)$ never vanishes, then the span of the set of translates $\{f_a : f_a(x) = f(a+x), a \in \R^d \}$ is dense in $L^2(\R^d)$.
\end{theorem}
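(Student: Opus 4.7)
The plan is to prove the contrapositive via Hilbert space duality: it suffices to show that if $g \in L^2(\mathbb{R}^d)$ is orthogonal to every translate $f_a$, then $g = 0$. By the Hahn--Banach theorem (or directly by orthogonal projection in the Hilbert space $L^2$), the closed span of $\{f_a : a \in \mathbb{R}^d\}$ equals $L^2(\mathbb{R}^d)$ if and only if this orthogonal complement is trivial.

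Fix such a $g$ and define $h(a) := \langle f_a, g \rangle_{L^2} = \int_{\mathbb{R}^d} f(a+x)\overline{g(x)}\, dx$ for $a \in \mathbb{R}^d$. The key computation is to express $h$ via the Fourier transform. Since translation in $\mathbb{R}^d$ corresponds to modulation on the Fourier side, one has $\mathsf{F}[f_a](\xi) = e^{i\langle a,\xi\rangle}\mathsf{F}[f](\xi)$. Applying Plancherel's identity on $L^2(\mathbb{R}^d)$ yields
\[
h(a) = \langle \mathsf{F}[f_a], \mathsf{F}[g] \rangle_{L^2} = \int_{\mathbb{R}^d} e^{i\langle a,\xi\rangle}\, \mathsf{F}[f](\xi)\,\overline{\mathsf{F}[g](\xi)}\, d\xi.
\]
By assumption $h(a) = 0$ for every $a \in \mathbb{R}^d$, so the displayed expression is the inverse Fourier transform of $\mathsf{F}[f]\cdot\overline{\mathsf{F}[g]}$ (up to the normalizing constant) and it vanishes identically.

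Here is where one must be careful about integrability: since $f, g \in L^2(\mathbb{R}^d)$, Plancherel gives $\mathsf{F}[f], \mathsf{F}[g] \in L^2(\mathbb{R}^d)$, and the Cauchy--Schwarz inequality ensures $\mathsf{F}[f]\cdot\overline{\mathsf{F}[g]} \in L^1(\mathbb{R}^d)$. Thus the injectivity of the Fourier transform on $L^1(\mathbb{R}^d)$ applies and gives $\mathsf{F}[f](\xi)\,\overline{\mathsf{F}[g](\xi)} = 0$ for almost every $\xi$.

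The hypothesis that $\mathsf{F}[f]$ never vanishes now forces $\mathsf{F}[g] = 0$ almost everywhere, and a second application of Plancherel gives $g = 0$ in $L^2$. This proves the orthogonal complement of the span of translates is trivial and hence establishes the desired density. The only potentially delicate point is the integrability argument that legitimizes invoking Fourier injectivity on $L^1$; beyond that, the proof is a short string of standard manipulations.
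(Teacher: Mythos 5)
Your proposal is correct and follows essentially the same route as the paper's proof: take $g$ orthogonal to all translates, pass to the Fourier side via Plancherel, observe that the vanishing of $\langle f_a, g\rangle$ for all $a$ means the (inverse) Fourier transform of $\mathsf{F}[f]\cdot\overline{\mathsf{F}[g]}$ is identically zero, and conclude $g=0$ from Fourier injectivity plus the non-vanishing of $\mathsf{F}[f]$. Your explicit Cauchy--Schwarz remark justifying that $\mathsf{F}[f]\cdot\overline{\mathsf{F}[g]} \in L^1$ is a small point of added care that the paper leaves implicit.
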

\begin{proof}
Suppose that $g \in L^2(\R^d)$ is orthogonal to all translates of $f$. Then, because $\mathsf{F}$ is a unitary operator on $L^2(\R^d)$,
\begin{align*}
0 &= \int_{\R^d} g(x) f_a(x) \dd x\\
&= \int_{\R^d} \mathsf{F}[g](p) \mathsf{F}[f_a](p) \, \dd p\\
&= \int_{\R^d} e^{iap} \mathsf{F}[g](p) \mathsf{F}[f](p) \, \dd p
\end{align*}
for all $a \in \R^d$.
Equivalently, we have
\[
\mathsf{F}[\mathsf{F}[g]\cdot \mathsf{F}[f]](-a) = 0
\]
for all $a \in \R^d$. That is, $\mathsf{F}[\mathsf{F}[g]\cdot \mathsf{F}[f]] = 0$. Since $\mathsf{F}$ is injective, and $\mathsf{F}[f]$ never vanishes, we have $g = 0$, implying the desired density result.
\end{proof}

\begin{lemma}[Well-definedness of inner product]
For $f \in \ \dot{H}_0^{1,2}(\Gauss)$, $f * \varphi_\sigma = 0$ if and only if $f = 0$ almost everywhere.
\end{lemma}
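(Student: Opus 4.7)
The reverse direction is immediate from $\varphi_\sigma > 0$, so the content lies in showing that $f \in \dot{H}_0^{1,2}(\Gauss)$ with $f * \varphi_\sigma \equiv 0$ must vanish almost everywhere. The plan is to rewrite the convolution as a weighted bilateral Laplace transform of $f\varphi_\sigma$ and then exploit density of polynomials in $L^2(\Gauss)$.

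The first step is to upgrade $f \in \dot{H}_0^{1,2}(\Gauss)$ to $f \in L^2(\Gauss)$ via the Poincar\'{e} inequality for $\Gauss$ that was already invoked in the proof of \cref{thm:limit-distribution}; Cauchy--Schwarz then places $f\varphi_\sigma \in L^1(\R^d) \cap L^2(\R^d)$. Using the elementary Gaussian identity $\varphi_\sigma(x-y) = (2\pi\sigma^2)^{d/2}\varphi_\sigma(x)\varphi_\sigma(y)\, e^{\langle x,y\rangle/\sigma^2}$, the convolution factors as
\[
(f*\varphi_\sigma)(x) = (2\pi\sigma^2)^{d/2}\varphi_\sigma(x)\, M(x/\sigma^2), \quad M(t) := \int f(y)\, e^{\langle t,y\rangle}\, \varphi_\sigma(y)\,dy,
\]
and since $\varphi_\sigma > 0$ everywhere, the hypothesis is equivalent to $M \equiv 0$ on $\R^d$.

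Because $\varphi_\sigma$ dominates every exponential $e^{\langle t,y\rangle}$, the function $M$ is smooth on $\R^d$ (indeed entire) with $\partial^k M(0) = \int y^k f(y)\,\varphi_\sigma(y)\,dy = \langle f,\, y^k\rangle_{L^2(\Gauss)}$ for every multi-index $k$. Differentiation under the integral is justified by a Cauchy--Schwarz bound against a Gaussian polynomial moment, and this is the sole technical point in the whole argument. Vanishing of $M$ therefore forces $f$ to be $L^2(\Gauss)$-orthogonal to every polynomial; since the multivariate probabilist's Hermite polynomials form an orthonormal basis of $L^2(\Gauss)$---exactly the basis used in the proof of \cref{thm:dual-Sobolev-RKHS}---polynomials are dense in $L^2(\Gauss)$, so $f = 0$ in $L^2(\Gauss)$ and thus Lebesgue-a.e. (An equivalent endgame, more in the spirit of \cref{thm:tauberian}, analytically continues $M$ from $\R^d$ to $\C^d$ to deduce $\widehat{f\varphi_\sigma} \equiv 0$ and then invokes Fourier injectivity on $L^1(\R^d) \cap L^2(\R^d)$.)
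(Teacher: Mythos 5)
Your proof is correct, but it follows a genuinely different route from the paper's. The paper approximates $f$ in $L^2(\Gauss)$ by compactly supported simple functions $f_n$, shows $\int f_n\,\varphi_\sigma(y-\cdot)\to 0$ for every $y$, invokes Wiener's Tauberian theorem (\cref{thm:tauberian}) to get density of the Gaussian translates in $L^2(\R^d)$, and concludes $f_n\rightharpoonup 0$ weakly in $L^2(\R^d)$, hence in $L^2(\Gauss)$, so the strong limit $f$ must be $0$. You instead exploit the specific Gaussian factorization $\varphi_\sigma(x-y)=(2\pi\sigma^2)^{d/2}\varphi_\sigma(x)\varphi_\sigma(y)e^{\langle x,y\rangle/\sigma^2}$ to reduce $f*\varphi_\sigma\equiv 0$ to the vanishing of the exponential-moment function $M$, differentiate under the integral (correctly dominated via Cauchy--Schwarz against Gaussian exponential-polynomial moments) to kill all moments $\langle f, y^k\rangle_{L^2(\Gauss)}$, and finish with density of polynomials in $L^2(\Gauss)$. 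Your argument is more elementary and self-contained---no Tauberian theorem, no weak-convergence detour---but it is tied to the Gaussian kernel, whereas the paper's proof works verbatim for any smoothing kernel with nowhere-vanishing Fourier transform. Two small points to tighten: the orthonormal basis used in the proof of \cref{thm:dual-Sobolev-RKHS} is for the Sobolev inner product $\langle\nabla\cdot,\nabla\cdot\rangle$ on $\dot{H}_0^{1,2}(\cN_1)$, not for $L^2(\Gauss)$, so you should instead cite the classical completeness of (suitably scaled) Hermite polynomials in $L^2$ of a Gaussian measure---mere $L^2$ or $L^1$ membership of $f\varphi_\sigma$ would not by itself resolve the moment problem; and the ``reverse'' direction needs no positivity of $\varphi_\sigma$, it is immediate since convolving a Lebesgue-null function with $\varphi_\sigma$ gives zero.
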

\begin{proof}
By the previous remark, we can consider $f$ as an element of $L^2(\Gauss)$. The ``if'' direction is trivial. For the other direction, recall that $f$ can be realized as the limit in $L^2(\Gauss)$ of a sequence $\{f_n\}_{n \in \N}$ of simple functions with compact support. If $f * \varphi_\sigma = 0$, we have for any $y \in \R^d$ that
\begin{align*}
    &\left|\int_{\R^d} f_n(x) \varphi_\sigma (y - x) \,\dd x\right|\\
    = &\left|\int_{\R^d} (f_n - f)(x) \varphi_\sigma (y - x) \,\dd x\right|\\
    \leq & \|f - f_n\|_{L^2(\Gauss)} \int_{\R^d} \frac{\varphi_\sigma(y-x)^2}{\varphi_\sigma(x)} \dd x \to 0
\end{align*}
as $n \to \infty$. Because the Fourier transform of $\varphi_\sigma$ never vanishes, \cref{thm:tauberian} implies that the span of the functions $\varphi_\sigma(y - \cdot)$ is dense in $L^2(\R^d)$; thus, $ \langle f_n, g \rangle_{L^2(\R^d)} \to 0$ for any $g \in L^2(\R^d)$.
That is, the sequence $f_n$ converges weakly to 0 in $L^2(\R^d)$. Hence, $f_n$ must converge weakly to 0 in $L^2(\Gauss)$ as well (since the density $\varphi_\sigma$ is bounded). Seeing as $f$ is the ordinary limit of $f_n$ in $L^2(\Gauss)$, it must therefore coincide with the weak limit of 0.
\end{proof}

Now, since functions which are equal almost everywhere have the same convolution with $\varphi_\sigma$, this implies that $\dot{H}_0^{1,2}(\Gauss)*\varphi_\sigma$ is realizable as a Hilbert space of functions (not equivalence classes of functions), the most basic requirement for the RKHS property.

Next, we prove a lemma which allows us to concentrate on $\sigma = 1$ without loss of generality.

\begin{lemma}[Unit smoothing parameter]
\label{lem:sigma-reduction}
For $\mu,\nu \in \cP_p$, let $X \sim \mu$ and $Y \sim \nu$. Then,
\[
\Gds(\mu,\nu) = \sigma \, \mathsf{d}_p^{(1)}(\mu',\nu'),
\]
where $\mu'$ and $\nu'$ are the distributions of $X/\sigma$ and $Y/\sigma$, respectively.
\end{lemma}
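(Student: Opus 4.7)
The plan is to exploit the IPM representation of $\Gds$ and perform a single scaling change of variables that converts a Sobolev ball at scale $\sigma$ into a scaled Sobolev ball at scale $1$. Explicitly, I would write
\[
\Gds(\mu,\nu) = \sup_{f \in C_0^\infty,\, \|f\|_{\dot{H}^{1,q}(\Gauss)} \le 1} \big((\mu-\nu)*\Gauss\big)(f),
\]
and for each such $f$ introduce its rescaling $g(x) := f(\sigma x)$. Since $\sigma>0$, the map $f \mapsto g$ is a bijection $C_0^\infty \to C_0^\infty$, so it will suffice to track how the objective and the norm constraint transform.

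Next, I would compute the two scaling identities. For the seminorm, $\nabla g(x) = \sigma \nabla f(\sigma x)$, and the substitution $y = \sigma x$ (with Jacobian $\sigma^{-d}$) gives
\[
\|g\|_{\dot{H}^{1,q}(\cN_1)}^q = \int_{\R^d} \sigma^q |\nabla f(\sigma x)|^q \varphi_1(x)\,dx = \sigma^q \int_{\R^d} |\nabla f(y)|^q \varphi_\sigma(y)\,dy,
\]
using the identity $\sigma^{-d}\varphi_1(y/\sigma) = \varphi_\sigma(y)$, so $\|g\|_{\dot{H}^{1,q}(\cN_1)} = \sigma\,\|f\|_{\dot{H}^{1,q}(\Gauss)}$. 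For the objective, if $X \sim \mu$, $Y \sim \nu$ and $Z \sim \Gauss$ is independent, then $X+Z = \sigma(\tilde X + \tilde Z)$ where $\tilde X = X/\sigma \sim \mu'$ and $\tilde Z = Z/\sigma \sim \cN_1$, so $\E[f(X+Z)] = \E[g(\tilde X + \tilde Z)]$, and similarly for $\nu$. Hence
\[
\big((\mu-\nu)*\Gauss\big)(f) = \big((\mu'-\nu')*\cN_1\big)(g).
\]

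Combining the two, the constraint $\|f\|_{\dot{H}^{1,q}(\Gauss)} \le 1$ becomes $\|g\|_{\dot{H}^{1,q}(\cN_1)} \le \sigma$, and by the positive homogeneity of the supremum defining the dual Sobolev norm,
\[
\Gds(\mu,\nu) = \sup_{g \in C_0^\infty,\, \|g\|_{\dot{H}^{1,q}(\cN_1)} \le \sigma} \big((\mu'-\nu')*\cN_1\big)(g) = \sigma\, \mathsf{d}_p^{(1)}(\mu',\nu'),
\]
which is exactly the claim.

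I do not anticipate any real obstacle: the statement is a pure scaling identity, and the only care required is to confirm that the rescaling $f \leftrightarrow g$ preserves the ambient test class $C_0^\infty$ and that the Gaussian density transforms cleanly under $x \mapsto x/\sigma$. Both are immediate, so the main task is simply bookkeeping the powers of $\sigma$ in the seminorm calculation.
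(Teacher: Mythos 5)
Your proposal is correct and is essentially the paper's own proof: both rest on the dilation $x \mapsto \sigma x$, with the paper packaging the factor of $\sigma$ into an isometry $(Tf)(x) = \sigma^{-1}f(\sigma x)$ of $\dot{H}^{1,q}(\cN_\sigma)$ onto $\dot{H}^{1,q}(\cN_1)$ so that the unit ball maps to the unit ball, whereas you keep $g(x)=f(\sigma x)$ and extract the $\sigma$ at the end via positive homogeneity of the supremum. The two are the same change-of-variables argument with the bookkeeping of $\sigma$ placed differently.
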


\begin{proof}[Proof of Lemma \ref{lem:sigma-reduction}]
\label{prf:sigma-reduction}
First, define the isometric isomorphism $T:\dot{H}^{1,q}(\cN_\sigma) \to \dot{H}^{1,q}(\cN_1)$ by $(T f)(x) = \sigma^{-1}f(\sigma x)$. We verify
\begin{align*}
    \int_{\R^d} |\nabla_x (Tf)(x)|^q \, \dd \cN_1(x) &= \int_{\R^d} |\sigma^{-1} \nabla_x f(\sigma x)|^q \varphi_1(x)\, \dd x\\
    &= \int_{\R^d} |\nabla f(\sigma x)|^q \varphi_1(x)\, \dd x \\
    &= \int_{\R^d}|\nabla f(u)|^q \, \dd \cN_\sigma(u).
\end{align*}
Taking independent $X \sim \mu, Y \sim \nu$, and $Z \sim \cN_1$ and noting that $f(x) = \sigma\, Tf(x/\sigma)$, we have
\begin{align*}
(\mu-\nu)(f*\varphi_\sigma) &= \E \left[f(X + \sigma Z) - f(Y + \sigma Z)\right]\\
&= \sigma \cdot \E \left[Tf(X/\sigma + Z) - Tf(Y/\sigma + Z)\right]\\
&= \sigma \cdot(\mu' - \nu')(Tf * \varphi_1),
\end{align*}
where $\mu'$ and $\nu'$ are the distributions of $X/\sigma$ and $Y/\sigma$, respectively. Thus,
\begin{equation*}
\begin{split}
\Gds(\mu,\nu) &= \sup_{\substack{f \in \dot{H}^{1,q}(\cN_\sigma)\\ \|f\|_{\dot{H}^{1,q}(\cN_\sigma)} \leq 1}} (\mu - \nu)(f*\varphi_\sigma) \\
&= \sigma \sup_{\substack{Tf \in \dot{H}^{1,q}(\cN_1)\\ \|Tf\|_{\dot{H}^{1,q}(\cN_1)} \leq 1}} (\mu' - \nu')(Tf*\varphi_1) \\
&= \sigma\, \mathsf{d}_p^{(1)}(\mu',\nu').
\end{split}
\end{equation*}
This completes the proof.
\end{proof}

Next, we identify an orthonormal basis of $\dot{H}_0^{1,2}(\cN_1) * \varphi_1$.
We first prove that Hermite polynomials form an orthonormal basis of $\dot{H}_{0}^{1,2}(\cN_1)$, and then translate this to an orthonormal basis of $\dot{H}_0^{1,2}(\cN_1)*\varphi_1$.
Here, for $k \in \N_0^d$, we write $x^k := \prod_{i=1}^d x_i$ and $\bar{k} := \sum_{i=1}^d k_i$.

\begin{lemma}[Orthonormal basis of $\dot{H}_0^{1,2}(\cN_1) * \varphi_1$]
\label{lem:smoothed-Sobolev-ONB}
The monomials $\phi_k(x) = (\bar{k}\prod_{i=1}^d k_i)^{-1/2} x^k$, $0 \neq k \in \N_0^d$, comprise an orthonormal basis of $\dot{H}_0^{1,2}(\cN_1)*\varphi_1$.
\end{lemma}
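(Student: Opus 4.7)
The approach is to identify an orthonormal basis of $\dot{H}_0^{1,2}(\cN_1)$ using the multivariate (probabilist's) Hermite polynomials, and then transport it via the convolution isometry $f \mapsto f * \varphi_1$.

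First, with $\He_n$ the one-dimensional probabilist's Hermite polynomials and $\He_k(x) := \prod_{i=1}^d \He_{k_i}(x_i)$ for $k \in \N_0^d$, I would use the standard facts that $\{\He_k\}_{k \in \N_0^d}$ is a complete orthogonal system in $L^2(\cN_1)$ with $\cN_1(\He_k \He_\ell) = k!\,\delta_{k\ell}$ (where $k! := \prod_i k_i!$), together with the differentiation rule $\partial_i \He_k = k_i \He_{k-e_i}$. A one-line calculation then gives
\begin{equation*}
\langle \He_k, \He_\ell \rangle_{\dot{H}^{1,2}(\cN_1)} = \sum_{i=1}^{d} \int (\partial_i \He_k)(\partial_i \He_\ell)\,\dd \cN_1 = \bar{k}\,k!\,\delta_{k\ell},
\end{equation*}
so after removing $k=0$ (constants, which lie in the kernel of the seminorm) and normalizing, $\{\He_k / \sqrt{\bar{k}\, k!} : 0 \neq k \in \N_0^d\}$ is orthonormal in $\dot{H}_0^{1,2}(\cN_1)$.

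Second, I would invoke the classical identity $\He_n * \varphi_1(x) = x^n$, which follows from matching generating functions in $\E[\He_n(x+Z)] = x^n$ for $Z \sim \cN(0,1)$ (the exponential generating function of $\He_n$ is $e^{xt - t^2/2}$, and multiplying by $\E[e^{Zt}] = e^{t^2/2}$ recovers $e^{xt}$). Tensorization yields $\He_k * \varphi_1(x) = x^k$. Since the inner product on $\dot{H}_0^{1,2}(\cN_1) * \varphi_1$ is defined precisely so that $f \mapsto f*\varphi_1$ is an isometric isomorphism onto it, the transported orthonormal basis is $\{x^k / \sqrt{\bar{k}\,k!} : 0 \neq k \in \N_0^d\}$, which matches $\phi_k$ up to the normalization convention (identifying $\prod_i k_i$ in the statement with $k!$).

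The main obstacle is completeness. Orthonormality and the convolution formula are routine, but to upgrade the Hermite family from an $L^2(\cN_1)$-basis of mean-zero functions to a basis of the completion $\dot{H}_0^{1,2}(\cN_1)$, one needs a density argument bridging the $L^2$ and Sobolev seminorm topologies. My plan is to use the Gaussian $2$-Poincar\'e inequality (used earlier in the proof of \cref{thm:limit-distribution}) to control $\|f - \cN_1(f)\|_{L^2(\cN_1)}$ by $\|f\|_{\dot{H}^{1,2}(\cN_1)}$, compute the Hermite coefficients of $f \in \dot{C}_0^\infty$ via integration by parts $\int \partial_i f \cdot \He_{k-e_i}\,\dd \cN_1 = \int f \cdot (x_i - \partial_i)\He_{k-e_i}\,\dd\cN_1$ to relate partial-sum expansions in $L^2(\cN_1)$ and in seminorm, and then conclude by the explicit construction of $\dot{H}_0^{1,2}(\cN_1)$ as the completion of $\dot{C}_0^\infty$ (see \cref{rem:explicit-construction}).
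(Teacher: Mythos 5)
Your proposal is correct, and the first two steps (the Hermite orthogonality computation in $\dot{H}_0^{1,2}(\cN_1)$ and the transport through the isometry $f \mapsto f*\varphi_1$) coincide with the paper's proof, including the identification of the normalization $\bigl(\bar{k}\prod_i k_i!\bigr)^{-1/2}$. You diverge in two places, both legitimately. For the identity $\He_n * \varphi_1(x) = x^n$ you use the generating-function computation $\E[\He_n(x+Z)] = x^n$, whereas the paper invokes the Rodrigues formula $\He_n = e^{-D^2/2}[x^n]$ together with $f*\varphi_1 = e^{D^2/2}f$ on polynomials; these are equivalent, and yours is arguably more elementary (just note $(\He_n*\varphi_1)(x)=\E[\He_n(x-Z)]=\E[\He_n(x+Z)]$ by symmetry of $Z$). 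The substantive difference is completeness: the paper outsources it to Proposition 1.3 of Schmuland (1992), which gives density of polynomials in the inhomogeneous space $H^{1,2}(\cN_1)$ and hence in $\dot{H}^{1,2}(\cN_1)$, while you sketch a self-contained argument. Your sketch does go through, and in fact more directly than you indicate: the integration-by-parts identity you wrote is exactly $(x_i-\partial_i)\He_{k-e_i} = \He_k$, i.e.\ $\int \partial_i f\, \He_{k-e_i}\,\dd\cN_1 = \int f\,\He_k\,\dd\cN_1$, which shows that the gradient of the degree-$N$ Hermite partial sum of $f \in \dot{C}_0^\infty$ with $\cN_1(f)=0$ equals the degree-$(N-1)$ partial sum of the Hermite expansion of $\nabla f \in L^2(\cN_1;\R^d)$; $L^2$-convergence of the latter then gives convergence of the polynomial truncations to $f$ in the seminorm $\|\nabla\cdot\|_{L^2(\cN_1)}$, and density of $\dot{C}_0^\infty$ in the completion finishes the argument. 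Note that the $2$-Poincar\'e inequality is not actually needed for this step (elements of $\dot{C}_0^\infty$ are bounded, hence in $L^2(\cN_1)$); its role in the paper is elsewhere (well-definedness of the explicit construction in Appendix \ref{appen:Sobolev_space}). So your route trades the external citation for a short explicit creation-operator computation, which should be spelled out as above to close the sketch.
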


\begin{proof}[Proof of Lemma \ref{lem:smoothed-Sobolev-ONB}]
Recall that the Hermite polynomials defined as
\[
\He_n(x) = (-1)^n e^{x^2/2} \dv[n]{x} e^{x^2/2}
\]
satisfy $\He_n'(x) = n\He_{n-1}(x)$ and $\int \He_n \He_m \, \dd \cN_1 = n! \, \delta_{n,m}$ \cite{bogachev_gaussian_1998}.
They admit a natural multivariate extension
\[
\He_k(x) = \prod_{i=1}^d \He_{k_i}(x_i), \quad k \in \N_0^d,
\]
which satisfies
\begin{align*}
\langle  \He_k, \He_{k'} \rangle_{\dot{H}^{1,2}(\cN_1)} &= \int \langle \nabla \He_k, \nabla \He_{k'} \rangle \dd \cN_1\\
&= \sum_{i=1}^d \int \pdv{\He_k}{x_i} \pdv{\He_{k'}}{x_i} \dd \cN_1\\
&= \delta_{k,k'}\bar{k} \prod_{i=1}^d k_i!.
\end{align*}
Thus, the normalized polynomials $\widetilde{\He}_k := \left(\bar{k}\prod k_i!\right)^{-1/2} \He_k$, $0 \neq k \in \N_0^d$, form an orthonormal set, and it is easy to check that they span the space of $d$-variate polynomials $Q$ with $\cN_1(Q) = 0$.
By Proposition 1.3 of \cite{schmuland92}, polynomials are dense in the inhomogeneous Gaussian Sobolev space $H^{1,2}(\cN_1)$, and hence $\dot{H}^{1,2}(\cN_1)$, so it follows that the $\widetilde{\He}_k$ polynomials form an orthonormal basis for $\dot{H}_0^{1,2}(\cN_1)$.

Next, we observe that, in one dimension, $(\He_n * \varphi_1)(x) = x^n$. To see this, we use the Rodrigues formula for the Hermite polynomials \cite{rasala1981}, which states that $\He_n(x) = e^{-D^2/2}[x^n]$.
Here, $D$ is the differentiation operator and $\exp$ is defined on operators via its formal power series (working with polynomials, there are no issues of convergence). 
We can express convolution with a standard Gaussian in a similar way, with $f*\varphi_1 = e^{D^2/2} f$ (where it suffices to consider only $f$ that are polynomials) \cite{bilodeau1962}. Together, these reveal that $(\He_n * \varphi_1)(x) = x^n$. Thus, for $0 \neq k \in \N_0^d$, we obtain
\[
(\widetilde{\He}_k*\varphi_1)(x) = \left(\bar{k}\prod k_i!\right)^{-1/2} x^k =: \phi_k(x).
\]
Since the $\widetilde{\He}_k$ polynomials form an orthonormal basis for $\dot{H}_0^{1,2}(\cN_1)$, the $\phi_k$ monomials form an orthonormal basis for $\dot{H}_0^{1,2}(\cN_1) * \varphi_1$, as claimed.
\end{proof}

Now, the theorem follows via routine calculations.

\subsection{Proof of \cref{thm:dual-Sobolev-RKHS}}
\label{prf:dual-Sobolev-RKHS}

By \cref{lem:derivative}, we have  that for any $f \in \dot{H}_0^{1,2}(\Gauss)$, 
\begin{align*}
&|(f*\varphi_\sigma)(x)| \leq  D_2(\cN_{\sigma}) e^{|x|^2/(2\sigma^2)} \| \nabla f\|_{L^2(\Gauss)}\\
&= e^{|x|^2/(2\sigma^2)} \|f*\varphi_\sigma\|_{\dot{H}^{1,2}(\Gauss) * \varphi_\sigma},
\end{align*}
so pointwise evaluation at $x$ is a bounded linear operator on $\dot{H}_0^{1,2}(\Gauss) * \varphi_\sigma$ for each $x \in \R^d$.
This implies that $\dot{H}_0^{1,2}(\Gauss) * \varphi_\sigma$ is an RKHS over $\R^d$.
For $\sigma = 1$, we can compute the reproducing kernel from the orthonormal basis above (see Theorem 4.20 of \cite{steinwart2008}) as
\begin{align*}
\kappa^{(1)}(x,y) &= \sum_{0 \neq k \in \N_0^d} \phi_k(x) \phi_k(y)\\
&= \sum_{0 \neq k \in \N_0^d}\left(|k|\prod k_i!\right)^{-1} x^k y^k \\
&= \sum_{n = 1}^{\infty} \frac{1}{n\cdot n!}\sum_{|k| = n}\frac{n!}{\prod k_i!} x^ky^k\\
&= \sum_{n = 1}^{\infty} \frac{1}{n\cdot n!}\langle x,y \rangle^n = -\Ein(-\langle x,y \rangle).
\end{align*}
We note that $\kappa^{(1)}$ is positive semi-definite by this construction.
The MMD formulation \eqref{eq:MMD}
follows because
\begin{align*}
&\mathsf{d}_2^{(1)}(\mu,\nu)\\
&=  \sup \Big \{ \mu(f*\varphi_1) - \nu(f*\varphi_1) :\\
&\mspace{68mu} f \in \dot{H}_0^{1,2}(\cN_1), \| f \|_{\dot{H}^{1,2}(\cN_1)} \le 1 \Big \}\\
&=  \sup \Big \{ \mu(g) - \nu(g) :\\
&\mspace{68mu} g \in \dot{H}_0^{1,2}(\cN_1)*{\varphi_1}, \| g \|_{\dot{H}^{1,2}(\cN_1)*\varphi_1} \le 1 \Big \}\\
&= \mathsf{MMD}_{\dot{H}_0^{1,2}(\cN_1)*{\varphi_1}}(\mu,\nu)
\end{align*}
and the RHS of (5) %
is the standard kernel formulation of an MMD \cite{gretton_two_sample_2012}.
The extension to general $\sigma$ follows from \cref{lem:sigma-reduction} and the uniqueness of the reproducing kernel.\qed

\section{Proofs for \cref{sec:applications}}
\label{prf:applications}

\subsection{Proof of \cref{prop:two-sample_SW}}
\label{prf:two-sample_SW}
We first consider the size control. Suppose that $\mu = \nu$. Without loss of generality, we may assume that $\mu$ is not a point mass. To handle shifts of distributions, for any $a \in \R^{d}$, we represent
\begin{align*}
&\sqrt{\frac{mn}{N}} \Gds(\hat{\mu}_m*\delta_{-a},\hat{\nu}_n*\delta_{-a}) \\
&= \Bigg \|  \sqrt{\frac{n}{N}} \sqrt{m}(\hat{\mu}_m-\mu)(f(\cdot-a)*\varphi_{\sigma})\\
&\mspace{30mu}- \sqrt{\frac{m}{N}} \sqrt{n}(\hat{\nu}_n-\mu)(f(\cdot-a)*\varphi_{\sigma}) \Bigg \|_{\infty,\cF},
\end{align*}
where the function class $\cF = \{ f \in C_{0}^{\infty} : \| f \|_{\dot{H}^{1,q}(\cN_{\sigma})} \le 1 \}$ is the one from the proof of Theorem 3. %
Consider another function class 
\[
\cF_{\mathsf{shift}} = \{ f(\cdot - a) : f \in C_{0}^{\infty}, \| f \|_{\dot{H}^{1,q}(\cN_{\sigma})} \le 1, |a| \le C \}
\]
for some large enough constant $C < \infty$ such that the mean $a_\mu$ of $\mu$ satisfies $|a_\mu| < C$. It is not difficult to see from the proof of Theorem 3 %
that the function class $\cF_{\mathsf{shift}}*\varphi_{\sigma}$ is $\mu$-Donsker, which implies that (cf. Theorem 1.5.7 in \cite{vandervaart1996book})
\begin{align*}
\limsup_{m \to \infty}\Prob \Bigg ( \sup_{\substack{f \in C_{0}^{\infty}\\\| f \|_{\dot{H}^{1,q}(\cN_{\sigma})} \le 1 \\ |a-b| < \delta}}\big| \sqrt{m}(\hat{\mu}_m - \mu)\big ((f(\cdot-a)\\
-f(\cdot-b))*\varphi_\sigma \big) \big| > \epsilon \Bigg) \to 0
\end{align*}
as $\delta \to 0$, for all $\epsilon > 0$.
Here we used the fact that $|a-b| \to 0$ implies that $\Var \big((f(X-a)-f(X-b))*\varphi_\sigma\big) \to 0$. Since $\bar{X}_{m} \to a_{\mu}$ a.s. by the law of large numbers, we have
\begin{align*}
\sup_{\substack{f \in C_{0}^{\infty}\\ \| f \|_{\dot{H}^{1,q}(\cN_{\sigma})} \le 1}}\big| \sqrt{m}(\hat{\mu}_m - \mu)\big ((f(\cdot-\bar{X}_m)\\
-f(\cdot-a_{\mu}))*\varphi_\sigma \big) \big| \stackrel{\Prob}{\to} 0.
\end{align*}
A similar result holds for $\hat{\nu}_n$. 
Now, by Theorem 3, we have
\begin{equation}
\begin{split}
W_{m,n} &\le p \sqrt{\frac{mn}{N}}\\
&\min \Big \{ e^{\tr \hat{\Sigma}_{X}/(2q \sigma^2)}\Gds(\hat{\mu}_m*\delta_{-\bar{X}_m},\hat{\nu}_n*\delta_{-\bar{X}_m}),\\
&\mspace{50mu} e^{\tr \hat{\Sigma}_{Y}/(2q \sigma^2)}\Gds(\hat{\mu}_m*\delta_{-\bar{Y}_n},\hat{\nu}_n*\delta_{-\bar{Y}_n})\Big \},
\end{split}
\end{equation}

In view of this inequality, together with the fact that $\hat{\Sigma}_{X} \to \Sigma_{\mu}$ and $\hat{\Sigma}_{Y} \to \Sigma_{\mu}$ a.s., where $\Sigma_\mu$ is the covariance matrix of $\mu$, we conclude that $W_{m,n}$ is at most
\begin{align*}
p e^{\tr \Sigma_{\mu}/(2q\sigma^2)} \sqrt{\frac{mn}{N}} \Gds(\hat{\mu}_m*\delta_{-a_\mu},\hat{\nu}_n*\delta_{-a_\mu}) + o_{\Prob}(1). 
\end{align*}
Now, the function class $\cF*\varphi_{\sigma}$ is Donsker w.r.t. $\mu*\delta_{-a_{\mu}}$, so that from p. 361 of \cite{vandervaart1996book}, we have 
\[
\sqrt{\frac{mn}{N}} \Gds(\hat{\mu}_m*\delta_{-a_\mu},\hat{\nu}_n*\delta_{-a_\mu}) \stackrel{d}{\to} \|G\|_{\cF,\infty}, 
\]
where $G$ is the Gaussian process that appears in Theorem 3 with $\mu$ replaced by $\mu*\delta_{-a_{\mu}}$.

Is is easy to show that the distribution function of $\|G\|_{\dot{H}^{-1,p}(\cN_{\sigma})}$ is continuous (cf. the proof of Lemma 3 in \cite{goldfeld2020asymptotic}), so long as $\mu$ is not a point mass (in which case the proposition is trivially true). To show that the test has asymptotic level $\alpha$, it then suffices to show that (cf. Lemma 23.3 in \cite{vandervaart1998}) 
\begin{equation}
\begin{split}
\Prob^{B} \left(\sqrt{\frac{mn}{N}}\Gds\left(\hat{\mu}_{m}^B*\delta_{-\bar{Z}_N},\hat{\nu}_{n}^B*\delta_{-\bar{Z}_N}\right) \le t \right )\\
\stackrel{\Prob}{\to} \Prob \left (\|G\|_{\dot{H}^{-1,p}(\cN_{\sigma})} \le t \right), \quad \forall t \ge 0. 
\end{split}
\label{eq:bootstrap_consistency}
\end{equation}
Observe that 
\[
\begin{split}
&\sqrt{\frac{mn}{N}}\Gds\left(\hat{\mu}_{m}^B*\delta_{-\bar{Z}_N},\hat{\nu}_{n}^B*\delta_{-\bar{Z}_N}\right) \\
&= \Bigg \|  \sqrt{\frac{n}{N}} \sqrt{m}\big(\hat{\mu}_m^B-\hat{\gamma}_N\big)\big(f(\cdot-\bar{Z}_N)*\varphi_{\sigma}\big) \\
&\mspace{30mu}- \sqrt{\frac{m}{N}} \sqrt{n}\big(\hat{\nu}_n^B-\hat{\gamma}_N\big)\big(f(\cdot-\bar{Z}_N)*\varphi_{\sigma}\big) \Bigg \|_{\infty,\cF}.
\end{split}
\]
Since the function class $\cF_{\mathsf{shift}}*\varphi_{\sigma}$ is $\mu$-Donsker, 
by Theorem 3.6.1 in \cite{vandervaart1996book}, the bootstrap process $\sqrt{m}(\hat{\mu}_m^B - \hat{\gamma}_N)$ indexed by $\cF_{\mathsf{shift}} * \varphi_{\sigma}$ converges in distribution in $\ell^{\infty}(\cF_{\mathsf{shift}}*\varphi_{\sigma})$ \textit{unconditionally}, which implies that
\begin{align*}
\limsup_{m,n \to \infty}\Prob \Bigg ( \sup_{\substack{f \in C_{0}^{\infty}\\ \| f \|_{\dot{H}^{1,q}(\cN_{\sigma}) \le 1} \\ |a-b| < \delta}}\big| \sqrt{m}\big(\hat{\mu}_m^B - \hat{\gamma}_N\big)\big ((f(\cdot-a)\\
-f(\cdot-b))*\varphi_\sigma \big) \big| > \epsilon \Bigg) \to 0
\end{align*}
as $\delta \to 0$, for all $\epsilon > 0$.
Since $\bar{Z}_{N} \to a_{\mu}$ a.s. by the law of large numbers, we have
\begin{align*}
\sup_{f \in C_{0}^{\infty}, \| f \|_{\dot{H}^{1,q}(\cN_{\sigma})} \le 1}\big| \sqrt{m}\big(\hat{\mu}_m^B - \hat{\gamma}_N\big)\big ((f(\cdot-\bar{Z}_N)\\
-f(\cdot-a_{\mu}))*\varphi_\sigma \big) \big| \stackrel{\Prob}{\to} 0.
\end{align*}
An analogous result holds for $\hat{\nu}_{n}^B$. Thus, we have 
\begin{align*}
&\sqrt{\frac{mn}{N}}\Gds(\hat{\mu}_{m}^B*\delta_{-\bar{Z}_N},\hat{\nu}_{n}^B*\delta_{-\bar{Z}_N})\\
&= \sqrt{\frac{mn}{N}}\Gds(\hat{\mu}_{m}^B*\delta_{-a_{\mu}},\hat{\nu}_{n}^B*\delta_{-a_{\mu}})  + o_{\Prob}(1).
\end{align*}
The desired conclusion (\ref{eq:bootstrap_consistency}) follows from Theorem 3.7.6 in \cite{vandervaart1996book} combined with the fact that the function class $\cF*\gauss$ is $\mu*\delta_{-a_{\mu}}$-Donsker.

To show asymptotic consistency, suppose that $\mu \ne \nu$ and note that the preceding argument and Theorem 3.7.6 in \cite{vandervaart1996book} imply that 
\[
\Prob^{B}\left(W_{m,n}^B \le t\right) \stackrel{\Prob}{\to} \Prob \left ( pe^{\tr \Sigma_{\gamma}/(2q\sigma^2)} \| G_{\gamma} \|_{\dot{H}^{-1,p}(\cN_\sigma)} \le t \right)
\]
for all $t \geq 0$, where $\Sigma_{\gamma}$ is the covariance matrix of the measure $\gamma = \tau \mu + (1-\tau) \nu$ and $G_{\gamma}$ is the Gaussian process from Theorem 3 with $\mu$ replaced by $\gamma*\delta_{-a_{\gamma}}$ ($a_{\gamma}$ is the mean vector of $\gamma$). Furthermore, it is not difficult to see that $W_{m,n} \stackrel{\Prob}{\to} \infty$ under the alternative, which implies that $\Prob\big(W_{m,n} > w_{m,n}^{B}(1-\alpha)\big) \to 1$ whenever $\mu \ne \nu$. 
\qed

Propositions 7,8, and 9 follow from essentially similar proofs to those in \cite{goldfeld2020asymptotic}, which build on \cite{bernton2019} and \cite{pollard_min_dist_1980}, with arbitrary $p\geq 1$ instead of the $p=1$ considered therein (indeed, the needed results from \cite{villani2008optimal} hold for all $1 \leq p < \infty$), so we omit their proofs for brevity. 

\subsection{Proof of \cref{cor:M-SWE-generalization-error}}
\label{prf:M-SWE-generalization-error}
First, we state a simple lemma to bound generalization error of minimum distance estimation w.r.t. an IPM in terms of the empirical approximation error.

\begin{lemma}[Generalization error for GANs]
\label{lem:generalization-error}
For an IPM $\mathsf{d}$ and an estimator $\hat{\theta}_n\in\Theta$ with $\mathsf{d}(\hat{\mu}_n,\nu_{\hat{\theta}_n}) \leq \inf_{\theta \in \Theta} \mathsf{d}(\hat{\mu}_n,\nu_\theta) + \epsilon$, we have
\[
\mathsf{d}(\mu,\nu_{\hat{\theta}_n}) - \inf_{\theta \in \Theta}\mathsf{d}(\mu,\nu_\theta) \leq 2\, \mathsf{d}(\mu,\hat{\mu}_n) + \epsilon.
\]
\end{lemma}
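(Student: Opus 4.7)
The plan is to prove this lemma by two applications of the triangle inequality (which $\mathsf{d}$ satisfies as any IPM does, since a supremum of differences is a pseudometric, and it is a metric when the defining function class separates points), combined with the near-optimality condition on $\hat{\theta}_n$. Concretely, I would proceed as follows.

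First, fix an arbitrary $\theta \in \Theta$ and chain inequalities starting from $\mathsf{d}(\mu, \nu_{\hat{\theta}_n})$. By the triangle inequality,
\[
\mathsf{d}(\mu,\nu_{\hat{\theta}_n}) \leq \mathsf{d}(\mu,\hat{\mu}_n) + \mathsf{d}(\hat{\mu}_n,\nu_{\hat{\theta}_n}).
\]
The assumption on $\hat{\theta}_n$ bounds the second term by $\inf_{\theta' \in \Theta}\mathsf{d}(\hat{\mu}_n,\nu_{\theta'}) + \epsilon \leq \mathsf{d}(\hat{\mu}_n,\nu_\theta) + \epsilon$. Applying the triangle inequality once more to $\mathsf{d}(\hat{\mu}_n,\nu_\theta) \leq \mathsf{d}(\hat{\mu}_n,\mu) + \mathsf{d}(\mu,\nu_\theta)$ gives
\[
\mathsf{d}(\mu,\nu_{\hat{\theta}_n}) \leq 2\,\mathsf{d}(\mu,\hat{\mu}_n) + \mathsf{d}(\mu,\nu_\theta) + \epsilon.
\]
Since this holds for all $\theta \in \Theta$, taking the infimum on the right-hand side yields the claim.

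There is no real obstacle here; the entire content lies in the triangle inequality and the near-optimality hypothesis, and the argument is standard for minimum distance estimation. The slightly subtle point worth noting is that the argument does not use any specific structure of $\mathsf{d}$ as an IPM beyond the triangle inequality, so the lemma applies verbatim to any (pseudo)metric on probability measures; the IPM hypothesis is stated only because the application in \cref{cor:M-SWE-generalization-error} (via $\GWp$, which is an IPM on centered measures only indirectly, but a genuine metric) and throughout the paper operates within that framework. Combining the resulting bound with the concentration inequality of \cref{prop:concentration1} applied to $\mathsf{d}(\mu,\hat{\mu}_n) = \GWp(\mu,\hat{\mu}_n)$ then immediately yields \cref{cor:M-SWE-generalization-error}, after absorbing the $Cn^{-1/2}$ term into the rescaled deviation.
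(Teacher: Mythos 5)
Your proof is correct and matches the paper's approach: the paper simply notes the lemma is a consequence of the triangle inequality (citing \cite{zhang_generalization_2018}), which is exactly the two-application chaining argument plus the near-optimality hypothesis that you spell out. Nothing further is needed.
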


This is a consequence of the triangle inequality, see \cite{zhang_generalization_2018} for example. Hence, our conclusion follows upon noting that
\begin{align*}
\Prob\left(2\GWp(\mu,\hat{\mu}_n) > t \right) &\leq \Prob\left(\Gds(\mu,\hat{\mu}_n) > Ct \right)\\
&\leq \exp(-n(Ct - C'n^{-1/2})^2)\\
&\leq C_1\exp(-C_2 nt^2),
\end{align*}
where constants $C,C',C_1,C_2$ are independent of $n$ and $t$. Here, we have combined the concentration result \eqref{eq:Gds-concentration}, the comparison from Theorem 1, and the fast rate from Corollary 2.\qed

\section{Additional Details for Experiments}
\label{app:experiments}

In Figure \ref{fig:S-MWE-2}, we present additional S-MWE experiments for a single Gaussian parameterized by mean and variance, demonstrating similar limiting behavior to the mixture results provided in the main text.

We note that experiments for Figures \ref{fig:empirical-convergence}, \ref{fig:S-MWE}, and \ref{fig:S-MWE-2} were performed on a Dell OptiPlex 7050 PC with 32GB RAM and an 8 core 2.80GHz Intel Core i7 CPU, running in approximately 3 hours, 30 minutes, and 30 minutes, respectively. Computations for Figure \ref{fig:S-MWE} were performed on a cluster instance with 14 vCPUs and 112 GB RAM over several hours. Those for Figure \ref{fig:two-sample} were performed on a cluster machine with 14 vCPUs, 60 GB RAM, and a NVIDIA Tesla V100 over nearly 12 hours (hence the restriction to low dimensions).

\begin{figure}[h]
\includegraphics[width=8cm]{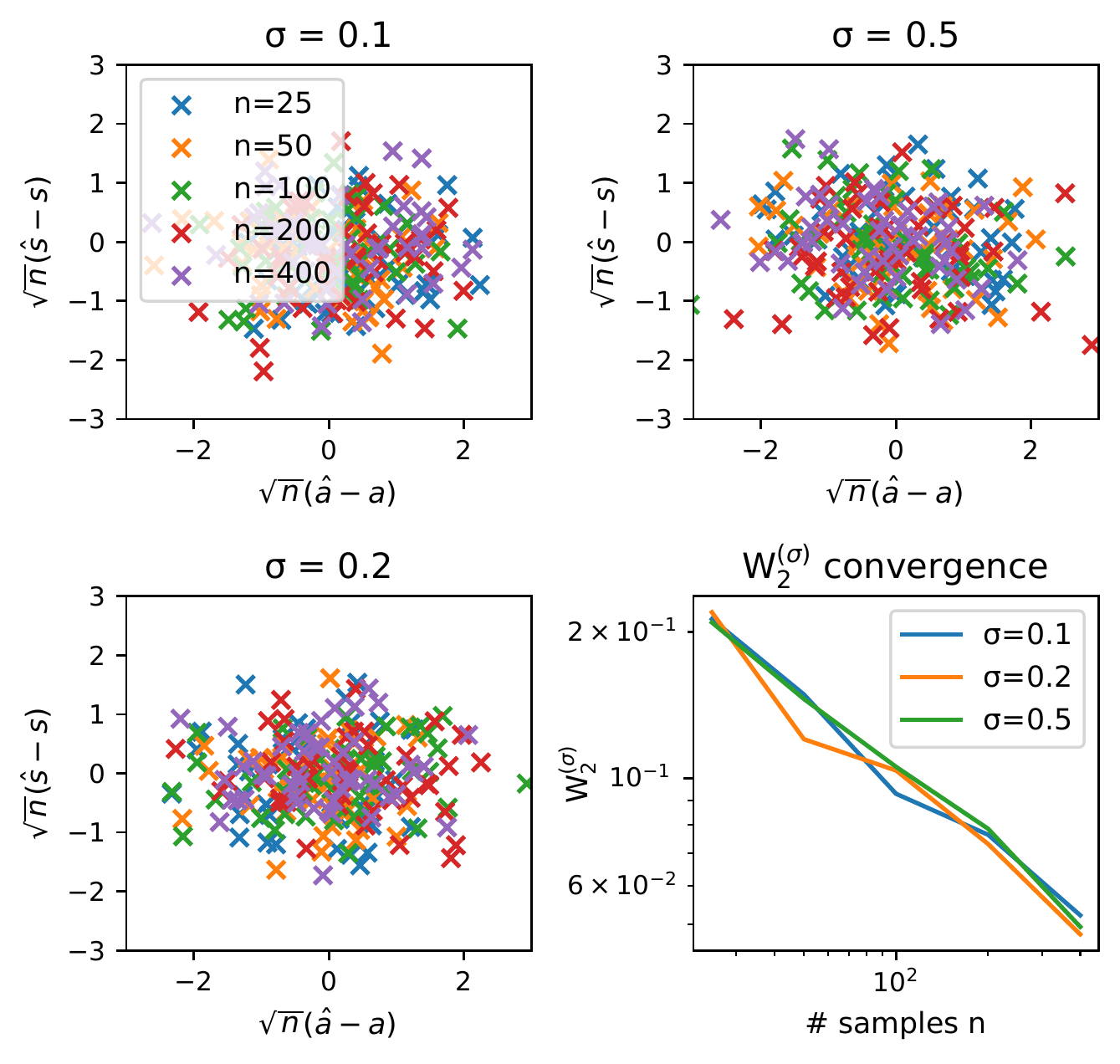}
\vspace{-1em}
\caption{One-dimensional limiting behavior of M-SWE estimates for the mean and standard deviation parameters of $\mu = \cN(a,s)$ with $a = 0$ and $s = 1$. Also shown is a log-log plot of $\mathsf{W}_2^{(\sigma)}$ convergence in $n$.}
\label{fig:S-MWE-2}
\end{figure}

Finally, we describe how the upper bound on $\mathsf{W}_2^{(\sigma)}$ was computed for the rightmost plot of Figure \ref{fig:empirical-convergence}.

\subsection{Upper Bound on $\E\big[\mathsf{W}_2^{(\sigma)}(\hat{\mu}_n,\mu)\big]$ using $\mathsf{d}_2^{(\sigma)}$}
\label{app:rough-upper-bound}

By \cref{thm:dual-Sobolev-RKHS}, we have
\begin{align*}
    \mathsf{d}_2^{(\sigma)}(\hat{\mu}_n,\mu)^2 =
    \E\big[\kappa^{(\sigma)}(X,X')\big] &+ \frac{1}{n^2}\sum_{i,j=1}^n \kappa^{(\sigma)}(X_i,X_j)\\ &- \frac{2}{n}\sum_{i=1}^n \E\big[\kappa^{(\sigma)}(X,X_i)\big],
\end{align*}
where $X,X' \sim \mu$ are independent. Taking expectations, we obtain
\begin{align*}
    &\E\Big[\mathsf{d}_2^{(\sigma)}(\hat{\mu}_n,\mu)^2\Big] =
    \E\big[\kappa^{(\sigma)}(X,X')\big] + 
    \frac{1}{n}\E\big[\kappa^{(\sigma)}(X,X)\big]\\ &+
    \left(1 - \frac{1}{n}\right) \E\big[\kappa^{(\sigma)}(X,X')\big] - \frac{2}{n}\sum_{i=1}^n \E\big[\kappa^{(\sigma)}(X,X_i)\big]\\
    &= \frac{1}{n}\left(\E\big[\kappa^{(\sigma)}(X,X)\big] - \E\big[\kappa^{(\sigma)}(X,X')\big]\right).
\end{align*}

Combining this with \cref{thm:smooth-Wp-Sobolev-comparison}, we reach the upper bound
\begin{align*}
    &\E\left[\mathsf{W}_2^{(\sigma)}(\hat{\mu}_n,\mu)\right] \leq 2 e^{\E[|X|^2]/(4\sigma^2)}\\
    &\left(\E\big[\kappa^{(\sigma)}(X,X)\big] - \E\big[\kappa^{(\sigma)}(X,X')\big]\right)^{1/2} n^{-1/2}.
\end{align*}
For Figure \ref{fig:empirical-convergence}, we estimate the kernel expectations via Monte Carlo integration with 1,000,000 samples. The kernel itself is computed via standard series-based methods for exponential integrals.

\section{Explicit Construction of the Homogeneous Sobolev space}
\label{appen:Sobolev_space}
Let $\gamma \in \cP$ be 
dominating the Lebesgue measure and satisfying the $p$-Poincar\'{e} inequality. Consider the homogeneous Sobolev space $\dot{H}^{1,p}(\gamma)$, which is constructed in %
Section 2 as the completion of $\dot{C}_{0}^\infty$ w.r.t. $\| \cdot \|_{\dot{H}^{1,p}(\gamma)}$. As such, it is not clear that the obtained space is a function space over $\R^d$. To show this is nevertheless the case, we present an explicit construction of $\dot{H}^{1,p}(\gamma)$ that does not rely on the completion.

Let $\cC=\{ f \in \dot{C}_{0}^{\infty} : \gamma(f) = 0 \}$. Then, $\| \cdot \|_{\dot{H}^{1,p}(\gamma)}$ is a proper norm on $\cC$, and the map $\iota: f \mapsto \nabla f$ is an isometry from $(\cC,\| \cdot \|_{\dot{H}^{1,p}(\gamma)})$ into $(L^{p}(\gamma;\R^d),\| \cdot \|_{L^{p}(\gamma;\R^d)})$. Let $V$ be the closure of $\iota \cC$ in $L^{p}(\gamma;\R^d)$ under $\| \cdot \|_{L^{p}(\gamma;\R^d)}$. The inverse map $\iota^{-1}: \iota \cC \to \cC$ can be extended to $V$. Indeed, for any $g \in V$, choose $f_n \in \cC$ such that $\| \nabla f_n - g \|_{L^p(\gamma;\R^d)} \to 0$. Since $\nabla f_n$ is Cauchy in $L^p(\gamma;\R^d)$, $f_n$ is Cauchy in $L^p(\gamma)$ by the $p$-Poincar\'{e} inequality, so $\| f_n - f \|_{L^p(\gamma)} \to 0$ for some $f \in L^p(\gamma)$. Set $\iota^{-1} g = f$ and extend $\| \cdot \|_{\dot{H}^{1,p}(\gamma)}$ by $\| f \|_{\dot{H}^{1,p}(\gamma)} = \lim_{n \to \infty} \| f_n \|_{\dot{H}^{1,p}(\gamma)}$.  The space $(\iota^{-1}V,\|\cdot \|_{\dot{H}^{1,p}(\gamma)})$ is a Banach space of functions over $\R^d$.

The homogeneous Sobolev space $\dot{H}^{1,p}(\gamma)$ is now constructed as $\dot{H}^{1,p}(\gamma) = \big\{ f+a : a \in \R, f \in \iota^{-1}V \big\}$ with $\| f+a \|_{\dot{H}^{1,p}(\gamma)} = \| f \|_{\dot{H}^{1,p}(\gamma)}$. 

\end{document}